\newtheoremstyle{dotless}{}{}{\itshape}{}{\bfseries}{}{}{}
\theoremstyle{dotless}
\theoremstyle{plain}
\newtheorem{thm}{Theorem}[section]
\newtheorem{lem}[thm]{Lemma}
\newtheorem{prop}[thm]{Proposition}
\newtheorem{cor}[thm]{Corollary}
\theoremstyle{definition}
\newtheorem{defn}[thm]{Definition}
\newtheorem{rem}[thm]{Remark}
\newtheorem{exa}[thm]{Example}
\newtheorem{cond}[thm]{Condition}
\newcommand{\N} {\mathbb{N}}
\newcommand{\R} {\mathbb{R}}
\newcommand{\C} {\mathbb{C}}
\newcommand{\K} {\mathbb{K}}
\newcommand{\D} {\mathbb{D}}
\newcommand{\FV} {\mathcal{FV}(\Omega)}
\newcommand{\FVE} {\mathcal{FV}(\Omega,E)}
\newcommand{\acx} {\operatorname{acx}}
\newcommand{\oacx} {\overline{\operatorname{acx}}}
\DeclareMathOperator{\id}{id}
\DeclareMathOperator{\dom}{dom}
\providecommand{\differential}{\mathrm{d}}
\renewcommand{\d}{\differential}
\newcommand{\normrec}[2]{%
  \ifnum\cntloop<#1
    \advance\cntloop by 1
    \def\next{\normrec{#1}{\left|\!#2\right|\!}}%
  \else
    \def\next{\left|\!#2\right|}%
  \fi
  \next}
\begin{document}

\title[$\varepsilon$-product]{Weighted vector-valued functions and the $\varepsilon$-product}
\author[K.~Kruse]{Karsten Kruse}
\address{TU Hamburg \\ Institut f\"ur Mathematik \\
Am Schwarzenberg-Campus~3 \\
Geb\"aude E \\
21073 Hamburg \\
Germany}
\email{karsten.kruse@tuhh.de}

\subjclass[2010]{Primary 46E40, Secondary 46E10, 46E15}

\keywords{vector-valued functions, $\varepsilon$-product, linearisation, weight, semi-Montel space}

\date{\today}
\begin{abstract}
We introduce a new class $\mathcal{FV}(\Omega,E)$ of spaces of weighted functions on a set $\Omega$ with values in a locally convex 
Hausdorff space $E$ which covers many classical spaces of vector-valued functions like continuous, smooth, holomorphic or harmonic 
functions. Then we exploit the construction of $\mathcal{FV}(\Omega,E)$ to derive sufficient conditions such that $\mathcal{FV}(\Omega,E)$ can be linearised, i.e.\ 
that $\mathcal{FV}(\Omega,E)$ is topologically isomorphic to the $\varepsilon$-product $\mathcal{FV}(\Omega)\varepsilon E$ where 
$\mathcal{FV}(\Omega):=\mathcal{FV}(\Omega,\mathbb{K})$ and $\mathbb{K}$ is the scalar field of $E$.
\end{abstract}
\maketitle
\section{Introduction}
This work is dedicated to a classical topic, namely, the linearisation of spaces of weighted vector-valued functions. 
The setting we are interested in is the following. Let $\FV$ be a locally convex Hausdorff space of 
functions from a non-empty set $\Omega$ to a field $\K$ whose topology is generated by a family $\mathcal{V}$ 
of weight functions on $\Omega$ and $E$ be a locally convex Hausdorff space. The $\varepsilon$-product 
of $\FV$ and $E$ is defined as the space of linear continuous operators
\[
\FV\varepsilon E:=L_{e}(\FV_{\kappa}',E)
\]
equipped with the topology of uniform convergence on equicontinuous subsets of $\FV'$ which 
itself is equipped with the topology of uniform convergence on absolutely convex compact subsets of $\FV$.
Suppose that there is a locally convex Hausdorff space $\FVE$ of $E$-valued functions 
on $\Omega$ such that the map 
\[
S\colon \FV\varepsilon E \to \FVE,\; u\longmapsto [x\mapsto u(\delta_{x})],
\] 
is well-defined where $\delta_{x}$, $x\in\Omega$, is the point-evaluation functional. 
The main question we want to answer reads as follows. When is $\FV\varepsilon E$ a 
linearisation of $\FVE$, i.e.\ when is $S$ a topological isomorphism?

In \cite{B3}, \cite{B1} and \cite{B2} Bierstedt treats the space $\mathcal{CV}(\Omega,E)$ 
of continuous functions on a completely regular Hausdorff space $\Omega$ weighted with 
a Nachbin-family $\mathcal{V}$ and its topological subspace $\mathcal{CV}_{0}(\Omega,E)$ of functions 
that vanish at infinity in the weighted topology. He derives sufficient conditions on $\Omega$, $\mathcal{V}$ and 
$E$ such that the answer to our question is affirmative, i.e.\ $S$ is a topological isomorphism. 
Schwartz answers this question for several spaces of weighted $k$-times continuously partially differentiable 
on $\Omega=\R^{d}$ like the Schwartz space in \cite{Schwartz1955} and \cite{Sch1} for quasi-complete $E$
with regard to vector-valued distributions. 
Grothendieck treats the question in \cite{Gro}, mainly for nuclear $\mathcal{FV}(\Omega)$ and complete $E$. 
In \cite{Kom7}, \cite{Kom8} and \cite{Kom9} Komatsu gives a positive answer for ultradifferentiable 
functions of Beurling or Roumieu type and sequentially complete $E$ with regard to vector-valued ultradistributions.
For the space of $k$-times continuously partially differentiable functions on open subsets $\Omega$ of 
infinite dimensional spaces equipped with the topology of uniform convergence of all partial derivatives 
up to order $k$ on compact subsets of $\Omega$ sufficient conditions for an affirmative answer are deduced 
by Meise in \cite{meise1977}. For holomorphic functions on open subsets of infinite dimensional spaces 
a positive answer is given in \cite{dineen1981} by Dineen. 
Bonet, Frerick and Jord{\'a} show in \cite{Bonet2002} that $S$ is a topological isomorphism for 
certain closed subsheafs of the sheaf $\mathcal{C}^{\infty}(\Omega,E)$ of smooth functions on an open subset 
$\Omega\subset\R^{d}$ with the topology of uniform convergence of all partial derivatives on compact subsets 
of $\Omega$ and locally complete $E$ which, in particular, covers the spaces of harmonic and holomorphic functions. 

In \cite{Bonet2002}, \cite{F/J} and \cite{F/J/W} linearisation is used by 
Bonet, Frerick, Jord{\'a}, Maestre and Wengenroth to derive results on extensions 
of vector-valued functions and weak-strong principles. 
Another application of linearisation is within the field of partial differential equations.
Let $P(\partial)$ be an elliptic linear partial differential operator with constant coefficients 
and $\mathcal{C}^{\infty}(\Omega):=\mathcal{C}^{\infty}(\Omega,\K)$. Then 
\[
P(\partial)\colon \mathcal{C}^{\infty}(\Omega)\to\mathcal{C}^{\infty}(\Omega)
\]
is surjective by \cite[Corollary 10.6.8, p.\ 43]{H2} and \cite[Corollary 10.8.2, p.\ 51]{H2}. 
Due to \cite[Satz 10.24, p.\ 255]{Kaballo}, the nuclearity of $\mathcal{C}^{\infty}(\Omega)$ and the 
topological isomorphy $\mathcal{C}^{\infty}(\Omega,E)\cong\mathcal{C}^{\infty}(\Omega)\varepsilon E$ for locally complete $E$, 
we immediately get the surjectivity of 
\[
P(\partial)\colon \mathcal{C}^{\infty}(\Omega,E)\to\mathcal{C}^{\infty}(\Omega,E)
\]
for Fr\'{e}chet spaces $E$. Thanks to the splitting theory of Vogt for Fr\'{e}chet spaces and of 
Bonet and Doma\'nski for PLS-spaces we even have 
that $P(\partial)$ for $d>1$ is surjective if $E:=F_{b}'$ where $F$ is a Fr\'{e}chet space satisfying the condition $(DN)$ 
by \cite[Theorem 2.6, p.\ 174]{vogt1983} or 
if $E$ is an ultrabornological PLS-space having the property $(PA)$ by \cite[Corollary 3.9, p.\ 1112]{D/L} 
since $\operatorname{ker}P(\partial)$ has the property $(\Omega)$ by \cite[Proposition 2.5 (b), p.\ 173]{vogt1983}. 
For examples of such PLS-spaces see \cite[Corollary 4.8, p.\ 1116]{D/L} and for more details on the properties $(DN)$, 
$(\Omega)$ and $(PA)$ see \cite{meisevogt1997} and \cite{Dom1}.

Our goal is to give a unified approach to linearisation which is able to handle new examples and covers the already known examples. 
This new approach is used in \cite{kruse2018_3} to generalise the extension results of \cite{Bonet2002}, \cite{F/J} and \cite{F/J/W} 
and to lift series representations from scalar-valued functions to vector-valued functions in \cite{kruse2018_1}.
Let us outline the content of this paper. We begin with some notation and preliminaries and introduce in 
the third section the spaces of functions $\FVE$ as subspaces of sections of domains of 
linear operators $T^{E}$ on $E^{\Omega}$ having a certain growth given by a family of weight functions $\mathcal{V}$. 
These spaces cover many examples of classical spaces of functions appearing in analysis like the mentioned ones.
Then we exploit the structure of our spaces to describe a sufficient condition, which we call consistency, 
on the interplay of the pairs of operators $(T^{E},T^{\K})$ and the map $S$ 
such that $S$ becomes a topological isomorphism into (see \prettyref{thm:linearisation}). 
In our main \prettyref{thm:full_linearisation} and its Corollaries \ref{cor:full_linearisation_FV_semi-M}, 
\ref{cor:full_linearisation_E_semi-M} and \ref{cor:full_linearisation_E_quasi_compl} we give several sufficient conditions on the 
pairs of operators $(T^{E},T^{\K})$ and spaces involved such that $\FV\varepsilon E\cong\FVE$ holds via $S$.
In the fourth section we treat the question which properties of functions allow a linearisation via $S$ and 
we close this work with many examples in the fifth section.
\section{Notation and Preliminaries}
We equip the spaces $\R^{d}$, $d\in\N$, and $\C$ with the usual Euclidean norm $|\cdot|$, denote by 
$\mathbb{B}_{r}(x):=\{w\in\R^{d}\;|\;|w-x|<r\}$ the ball around $x\in\R^{d}$ 
with radius $r>0$. 
Furthermore, for a subset $M$ of a topological space $X$ we denote the closure of $M$ by $\overline{M}$ 
and the boundary of $M$ by $\partial M$.
For a subset $M$ of a vector space $X$ we denote by $\operatorname{ch}(M)$ the circled hull, 
by $\operatorname{cx}(M)$ the convex hull and by $\operatorname{acx}(M)$ the 
absolutely convex hull of $M$. If $X$ is a topological vector space, we write $\oacx(M)$ 
for the closure of $\acx(M)$ in $X$.

By $E$ we always denote a non-trivial locally convex Hausdorff space over the field 
$\K=\R$ or $\C$ equipped with a directed fundamental system of 
seminorms $(p_{\alpha})_{\alpha\in \mathfrak{A}}$ and, in short, we write that $E$ is an lcHs. 
If $E=\K$, then we set $(p_{\alpha})_{\alpha\in \mathfrak{A}}:=\{|\cdot|\}.$ 
For details on the theory of locally convex spaces see \cite{F/W/Buch}, \cite{Jarchow} or \cite{meisevogt1997}.

By $X^{\Omega}$ we denote the set of maps from a non-empty set $\Omega$ to a non-empty set $X$, 
by $\chi_{K}$ we mean the characteristic function of $K\subset\Omega$,  
by $\mathcal{C}(\Omega,X)$ the space of continuous functions from a topological space $\Omega$ to a topological space $X$
and by $L(F,E)$ the space of continuous linear operators from $F$ to $E$ 
where $F$ and $E$ are locally convex Hausdorff spaces. 
If $E=\K$, we just write $F':=L(F,\K)$ for the dual space and $G^{\circ}$ for the polar set of $G\subset F$.
If $F$ and $E$ are (linearly) topologically isomorphic, we write $F\cong E$.
We denote by $L_{t}(F,E)$ the space $L(F,E)$ equipped with the locally convex topology $t$ of uniform convergence 
on the finite subsets of $F$ if $t=\sigma$, on the absolutely convex, compact subsets of $F$ if $t=\kappa$,
on the absolutely convex, $\sigma(F,F')$-compact subsets of $F$ if $t=\tau$, on the precompact (totally bounded) subsets of $F$ 
if $t=\gamma$ and on the bounded subsets of $F$ if $t=b$. We use the symbols $t(F',F)$ for the corresponding topology on $F'$ 
and $t(F)$ for corresponding bornology on $F$.
The so-called $\varepsilon$-product of Schwartz is defined by 
\begin{equation}\label{notation0}
F\varepsilon E:=L_{e}(F_{\kappa}',E)
\end{equation}
where $L(F_{\kappa}',E)$ is equipped with the topology of uniform convergence on equicontinuous subsets of $F'$. 
This definition of the $\varepsilon$-product coincides with the original 
one by Schwartz \cite[Chap.\ I, \S1, D\'{e}finition, p.\ 18]{Sch1}. 
It is symmetric which means that $F\varepsilon E\cong E\varepsilon F$. In the literature the definition of the 
$\varepsilon$-product is sometimes done the other way around, i.e.\ $E\varepsilon F$ is defined by the right-hand side 
of \eqref{notation0} but due to the symmetry these definitions are equivalent and for our purpose the given definition 
is more suitable. If we replace $F_{\kappa}'$ by $F_{\gamma}'$, we obtain Grothendieck's definition of the 
$\varepsilon$-product and  we remark that the two $\varepsilon$-products coincide 
if $F$ is quasi-complete because then $F_{\gamma}'=F_{\kappa}'$ holds. However, we stick to Schwartz' definition. 
For more information on the theory of $\varepsilon$-products see \cite{Jarchow} and \cite{Kaballo}.

The sufficient conditions for the surjectivity of the map $S\colon \FV\varepsilon E \to\FVE$ from the introduction 
which we derive in the forthcoming depend on assumptions on different types of completeness of $E$. 
For this purpose we recapitulate some definitions which are connected to completeness. 
We start with local completeness. For a disk $D\subset E$, i.e.\ a bounded, absolutely convex set, 
the vector space $E_{D}:=\bigcup_{n\in\N}nD$ becomes a normed space if it is equipped with 
gauge functional of $D$ as a norm (see \cite[p.\ 151]{Jarchow}). The space $E$ is called locally 
complete if $E_{D}$ is a Banach space for every closed disk $D\subset E$ (see \cite[10.2.1 Proposition, p.\ 197]{Jarchow}). 
Moreover, a locally convex Hausdorff space is locally complete if and only 
if it is convenient by \cite[2.14 Theorem, p.\ 20]{kriegl}.
In particular, every complete locally convex Hausdorff space is quasi-complete, 
every quasi-complete space is sequentially complete and every 
sequentially complete space is locally complete and all these implications are strict. The first two by 
\cite[p.\ 58]{Jarchow} and the third by \cite[5.1.8 Corollary, p.\ 153]{Bonet} and \cite[5.1.12 Example, p.\ 154]{Bonet}. 

Now, let us recall the following definition from \cite[9-2-8 Definition, p.\ 134]{Wilansky} 
and \cite[p.\ 259]{J.Voigt}. A locally convex Hausdorff space is said 
to have the [metric] convex compactness property ([metric] ccp) if 
the closure of the absolutely convex hull of every [metrisable] compact set is compact. 
Sometimes this condition is phrased with the term convex hull instead of absolutely convex hull but these definitions coincide. Indeed, 
the first definition implies the second since every convex hull of a set $A\subset E$ is contained in its absolutely convex hull. 
On the other hand, we have $\acx(A)=\operatorname{cx}(\operatorname{ch}(A))$ by \cite[6.1.4 Proposition, p.\ 103]{Jarchow} 
and the circled hull $\operatorname{ch}(A)$ of a [metrisable] compact set $A$ is compact by \cite[Chap.\ I, 5.2, p.\ 26]{schaefer} 
[and metrisable by \cite[Chap.\ IX, \S2.10, Proposition 17, p.\ 159]{bourbakiII} 
since $\D\times A$ is metrisable and $\operatorname{ch}(A)=M_{E}(\D\times A)$ 
where $M_{E}\colon\K\times E\to E$ is the continuous scalar multiplication and $\D$ the open unit disc] 
which yields the other implication.

In particular, every locally convex Hausdorff space with ccp has obviously metric ccp, every quasi-complete locally convex 
Hausdorff space has ccp by \cite[9-2-10 Example, p.\ 134]{Wilansky}, 
every sequentially complete locally convex Hausdorff space 
has metric ccp by \cite[A.1.7 Proposition (ii), p.\ 364]{bogachev} and 
every locally convex Hausdorff space with metric cpp is locally complete by \cite[Remark 4.1, p.\ 267]{J.Voigt}. 
All these implications are strict. The second by \cite[9-2-10 Example, p.\ 134]{Wilansky} 
and the others by \cite[Remark 4.1, p.\ 267]{J.Voigt}. 
For more details on the [metric] convex compactness property and local completeness see \cite{J.Voigt} 
and \cite{Bonet2002}.
In addition, we remark that every semi-Montel space is semi-reflexive by \cite[11.5.1 Proposition, p.\ 230]{Jarchow} 
and every semi-reflexive locally convex Hausdorff space is 
quasi-complete by \cite[Chap.\ IV, 5.5, Corollary 1, p.\ 144]{schaefer} and these implications are strict as well.
Summarizing, we have the following diagram of strict implications:
\begin{align*}
\text{semi-Montel}\;\Rightarrow\;&\phantom{q}\text{semi-reflexive}\\
&\phantom{semi}\Downarrow\\
\text{complete}\;\Rightarrow\;&\text{quasi-complete}\;\Rightarrow\;\text{sequentially complete}\;\Rightarrow\;\text{locally complete}\\
&\phantom{semi}\Downarrow\quad\phantom{complete sequentia}\Downarrow\phantom{complete}\; \rotatebox[origin=c]{29}{$\Longrightarrow$}\\
&\phantom{sem}\text{ccp}\phantom{omplete}\Rightarrow\phantom{seque}\text{metric ccp}
\end{align*}
Since spaces of weighted continuously partially differentiable vector-valued functions will serve as our standard examples, 
we recall the definition of the spaces $\mathcal{C}^{k}(\Omega,E)$.
A function $f\colon\Omega\to E$ on an open set $\Omega\subset\R^{d}$ to an lcHs $E$ is called 
continuously partially differentiable ($f$ is $\mathcal{C}^{1}$) 
if for the $n$-th unit vector $e_{n}\in\R^{d}$ the limit
\[
(\partial^{e_{n}})^{E}f(x):=\lim_{\substack{h\to 0\\ h\in\R,h\neq 0}}\frac{f(x+he_{n})-f(x)}{h}
\]
exists in $E$ for every $x\in\Omega$ and $(\partial^{e_{n}})^{E}f$ 
is continuous on $\Omega$ ($(\partial^{e_{n}})^{E}f$ is $\mathcal{C}^{0}$) for every $1\leq n\leq d$.
For $k\in\N$ a function $f$ is said to be $k$-times continuously partially differentiable 
($f$ is $\mathcal{C}^{k}$) if $f$ is $\mathcal{C}^{1}$ and all its first partial derivatives are $\mathcal{C}^{k-1}$.
A function $f$ is called infinitely continuously partially differentiable ($f$ is $\mathcal{C}^{\infty}$) 
if $f$ is $\mathcal{C}^{k}$ for every $k\in\N$.
For $k\in\N_{\infty}:=\N\cup\{\infty\}$ the functions $f\colon\Omega\to E$ which are $\mathcal{C}^{k}$ form a linear space which
is denoted by $\mathcal{C}^{k}(\Omega,E)$. 
For $\beta\in\N_{0}^{d}$ with $|\beta|:=\sum_{n=1}^{d}\beta_{n}\leq k$ and a function $f\colon\Omega\to E$ 
on an open set $\Omega\subset\R^{d}$ to an lcHs $E$ we set $(\partial^{\beta_{n}})^{E}f:=f$ if $\beta_{n}=0$, and
\[
(\partial^{\beta_{n}})^{E}f(x)
:=\underbrace{(\partial^{e_{n}})^{E}\cdots(\partial^{e_{n}})^{E}}_{\beta_{n}\text{-times}}f(x)
\]
if $\beta_{n}\neq 0$ and the right-hand side exists in $E$ for every $x\in\Omega$.
Further, we define 
\[
(\partial^{\beta})^{E}f(x)
=:\bigl((\partial^{\beta_{1}})^{E}\cdots(\partial^{\beta_{d}})^{E}\bigr)f(x)
\]
if the right-hand side exists in $E$ for every $x\in\Omega$.
\section{The $\varepsilon$-product for weighted function spaces}
In this section we introduce the space $\FVE$ of weighted $E$-valued functions on $\Omega$ 
as subspaces of sections of domains in $E^{\Omega}$ of linear operators $T^{E}_{m}$ 
equipped with a generalised version of a weighted graph topology. 
This space is the role model for many function spaces and an example for these operators 
are the partial derivative operators.
Then we treat the question whether we can identify $\FVE$ with $\FV\varepsilon E$ topologically. 
This is deeply connected with the interplay of the pair of operators $(T^{E}_{m},T^{\K}_{m})$ with the map $S$ from the introduction
(see \prettyref{def:consist}). In our main theorem we give sufficient conditions such that 
$\FVE\cong\FV\varepsilon E$ holds (see \prettyref{thm:full_linearisation}). 
We start with the well-known example $\mathcal{C}^{k}(\Omega,E)$ of $k$-times continuously partially differentiable 
$E$-valued functions as an appetiser to motivate our definition of $\FVE$.

\begin{exa}\label{ex:k_smooth_functions}
Let $k\in\N_{\infty}$ and $\Omega\subset\R^{d}$ open. Consider the space $\mathcal{C}(\Omega,E)$ 
of continuous functions $f\colon\Omega\to E$ with the compact-open topology, i.e.\ the topology given by the seminorms 
\[
\|f\|_{K,\alpha}:=\sup_{x\in K}p_{\alpha}(f(x)),\quad f\in\mathcal{C}(\Omega,E),
\]
for compact $K\subset\Omega$ and $\alpha\in\mathfrak{A}$. 
The usual topology on the space $\mathcal{C}^{k}(\Omega,E)$ of $k$-times continuously partially differentiable 
functions is the graph topology generated by the partial differential operators 
$
(\partial^{\beta})^{E}\colon \mathcal{C}^{k}(\Omega,E)\to \mathcal{C}(\Omega,E)
$
for $\beta\in\N_{0}^{d}$, $|\beta|\leq k$, i.e.\ the topology given by the seminorms 
\[
\|f\|_{K,\beta,\alpha}:=\max(\|f\|_{K,\alpha},\|(\partial^{\beta})^{E}f\|_{K,\alpha}),\quad f\in\mathcal{C}^{k}(\Omega,E),
\]
for compact $K\subset\Omega$, $\beta\in\N_{0}^{d}$, $|\beta|\leq k$, and $\alpha\in\mathfrak{A}$. 
The same topology is induced by the directed systems of seminorms given by 
\[
 |f|_{K,m,\alpha}:=\sup_{\beta\in\N_{0}^{d},|\beta|\leq m}\|f\|_{K,\beta,\alpha}
 =\sup_{\substack{x\in K\\ \beta\in\N_{0}^{d},|\beta|\leq m}}
 p_{\alpha}\bigl((\partial^{\beta})^{E}f(x)\bigr),\quad f\in\mathcal{C}^{k}(\Omega,E),
\]
for compact $K\subset\Omega$, $m\in\N_{0}$, $m\leq k$, and $\alpha\in\mathfrak{A}$ 
and may also be seen as a weighted topology induced by the family $(\chi_{K})$ of characteristic 
functions of the compact sets $K\subset\Omega$ by writing 
\[
 |f|_{K,m,\alpha}=\sup_{\substack{x\in \Omega\\ \beta\in\N_{0}^{d},|\beta|\leq m}}
 p_{\alpha}\bigl((\partial^{\beta})^{E}f(x)\bigr)\chi_{K}(x),\quad f\in\mathcal{C}^{k}(\Omega,E).
\]
This topology is inherited by linear subspaces of functions having additional properties like being holomorphic 
or harmonic. 
\end{exa}

We turn to the weight functions which we use to define a kind of weighted graph topology.

\begin{defn}[{weight function}]\label{def:weight} 
Let $J$ be a non-empty set and $(\omega_{m})_{m\in M}$ a family of non-empty sets. 
We call $\mathcal{V}:=(\nu_{j,m})_{j\in J,m\in M}$  
a family of weight functions on $(\omega_{m})_{m\in M}$ if $\nu_{j,m}\colon \omega_{m}\to [0,\infty)$ for all $j\in J$, $m\in M$ and 
\begin{equation}\label{loc3}
  \forall \; m\in M,\, x\in\omega_{m}\;\exists\;j\in J:\;0< \nu_{j,m}(x).
\end{equation}
\end{defn}

From the structure of the appetiser \prettyref{ex:k_smooth_functions} we arrive at the following definition 
of the spaces of weighted vector-valued functions we want to consider and which now should be easier to digest. 

\begin{defn}
Let $\Omega$ be a non-empty set, $\mathcal{V}:=(\nu_{j,m})_{j\in J,m\in M}$ 
a family of weight functions on $(\omega_{m})_{m\in M}$ and
$T^{E}_{m}\colon E^{\Omega}\supset\dom T^{E}_{m} \to E^{\omega_{m}}$ a linear map for every $m\in M$. 
Let $\operatorname{AP}(\Omega,E)$ be a linear subspace of $E^{\Omega}$ and define the space of intersections 
\[
\mathcal{F}(\Omega,E):=\operatorname{AP}(\Omega,E)\cap\bigl(\bigcap_{m\in M}\dom T^{E}_{m}\bigr)
\]
as well as
\[
\FVE:=\bigl\{f\in \mathcal{F}(\Omega,E)\;|\; 
 \forall\;j\in J,\, m\in M,\,\alpha\in \mathfrak{A}:\; |f|_{j,m,\alpha}<\infty\bigr\}
\]
where
\[
|f|_{j,m,\alpha}:=\sup_{x \in \omega_{m}}
p_{\alpha}\bigl(T^{E}_{m}(f)(x)\bigr)\nu_{j,m}(x).
\]
Further, we write $\mathcal{F}(\Omega):=\mathcal{F}(\Omega,\K)$ and $\FV:=\mathcal{FV}(\Omega,\K)$. 
If we want to emphasise dependencies, we write 
$M(\mathcal{FV})$ or $M(E)$ instead of $M$ and $\operatorname{AP}_{\mathcal{FV}}(\Omega,E)$ instead 
of $\operatorname{AP}(\Omega,E)$. 
\end{defn}

The space $\operatorname{AP}(\Omega,E)$ is a dummy where we collect additional properties ($\operatorname{AP}$) 
of our functions not being reflected by the operators $T^{E}_{m}$ which we integrated in the topology. 
However, these additional properties might come from being in the domain or kernel of additional operators, e.g.\  
harmonicity means being in the kernel of the Laplacian. 
The space $\FVE$ is locally convex but need not be Hausdorff. 
Since it is easier to work with Hausdorff spaces and a directed family 
of seminorms plus the point evaluation functionals $\delta_{x}\colon \FV\to \K$, 
$f\mapsto f(x)$, for $x\in \Omega$ and their continuity play a big role, we introduce the following definition. 

\begin{defn}[{$\dom$-space and $T^{E}_{m,x}$}]
We call $\FVE$ a $\dom$-space if it is a locally convex Hausdorff space, the system of 
seminorms $(|f|_{j,m,\alpha})_{j\in J, m\in M, \alpha\in\mathfrak{A}}$ 
is directed and, in addition, $\delta_{x}\in \FV'$ for every $x\in \Omega$ if $E=\K$. 
We define the point evaluation of $T^{E}_{m}$ by $T^{E}_{m,x}\colon \dom T^{E}_{m} \to E$,
$T^{E}_{m,x}(f):=T^{E}_{m}(f)(x)$, for $m\in M$ and $x\in\omega_{m}$.
\end{defn}

\begin{rem}\label{rem:weights_Hausdorff_directed}
It is easy to see that $\FVE$ is Hausdorff if there is $m\in M$ such $\omega_{m}=\Omega$ and 
$T^{E}_{m}=\operatorname{id}_{E^{\Omega}}$ since $E$ is Hausdorff. 
If this holds for $E=\K$, then $\delta_{x}\in \FV'$ for all $x\in \Omega$ by \eqref{loc3} as well. 
Further, the system of seminorms $(|f|_{j,m,\alpha})_{j\in J, m\in M, \alpha\in\mathfrak{A}}$ is directed if the family of weight 
functions $\mathcal{V}$ is directed, i.e.
\begin{flalign*}
  \forall \; j_{1},j_{2}\in J,\, &m_{1},m_{2}\in M\;\exists\;j_{3}\in J,\, m_{3}\in M,\,C>0\;
  \forall\;i\in\{1,2\}:\\ 
  &(\omega_{m_{1}}\cup \omega_{m_{2}})\subset \omega_{m_{3}}\quad\text{and}\quad \nu_{j_{i},m_{i}}\leq C\nu_{j_{3},m_{3}},
\end{flalign*}
since the system $(p_{\alpha})_{\alpha\in \mathfrak{A}}$ of $E$ is already directed.
\end{rem}

Let us turn to a more general version of \prettyref{ex:k_smooth_functions} as a digestif.

\begin{exa}\label{ex:weighted_smooth_functions}
Let $k\in\N_{\infty}$ and $\Omega\subset\R^{d}$ be open. 
We consider the cases
\begin{enumerate}
\item [(i)] $\omega_{m}:=M_{m}\times\Omega$ with $M_{m}:=\{\beta\in\N_{0}^{d}\;|\;|\beta|\leq \min(m,k)\}$ for all $m\in\N_{0}$, or
\item [(ii)] $\omega_{m}:=\N_{0}^{d}\times\Omega$ for all $m\in\N_{0}$ and $k=\infty$,
\end{enumerate}
and let $\mathcal{V}^{k}:=(\nu_{j,m})_{j\in J, m\in\N_{0}}$ be a directed family 
of weights on $(\omega_{m})_{m\in\N_{0}}$. 

a) We define the space of weighted $k$-times continuously partially differentiable functions with values in an lcHs $E$ as
\[
 \mathcal{CV}^{k}(\Omega,E):=\{f\in\mathcal{C}^{k}(\Omega,E)\;|\;\forall\;j\in J,\,m\in\N_{0},\,
 \alpha\in\mathfrak{A}:\;|f|_{j,m,\alpha}<\infty\} 
\]
where 
\[
 |f|_{j,m,\alpha}:=\sup_{(\beta,x)\in\omega_{m}}
 p_{\alpha}\bigl((\partial^{\beta})^{E}f(x)\bigr)\nu_{j,m}(\beta,x).
\]
Setting $\dom T^{E}_{m}:=\mathcal{C}^{k}(\Omega,E)$ and 
\[
 T^{E}_{m}\colon\mathcal{C}^{k}(\Omega,E)\to E^{\omega_{m}},\; f\longmapsto [(\beta,x)\mapsto (\partial^{\beta})^{E}f(x)], 
\]
as well as $\operatorname{AP}(\Omega,E):=E^{\Omega}$, we observe that $\mathcal{CV}^{k}(\Omega,E)$ is a $\dom$-space and 
\[
 |f|_{j,m,\alpha}=\sup_{x\in\omega_{m}}p_{\alpha}\bigl(T^{E}_{m}f(x)\bigr)\nu_{j,m}(x).
\]

b) The space $\mathcal{C}^{k}(\Omega,E)$ with its usual topology given in \prettyref{ex:k_smooth_functions} 
is a special case of a)(i) with $J:=\{K\subset\Omega\;|\;K\;\text{compact}\}$, 
$\nu_{K,m}(\beta,x):=\chi_{K}(x)$, $(\beta,x)\in\omega_{m}$, for all $m\in\N_{0}$ and $K\in J$ 
where $\chi_{K}$ is the characteristic function of $K$.  
In this case we write $\mathcal{W}^{k}:=\mathcal{V}^{k}$ for the family of weight functions.

c) The Schwartz space is defined by
\[
\mathcal{S}(\R^{d},E)
:=\{f\in \mathcal{C}^{\infty}(\R^{d},E)\;|\;
\forall\;m\in\N_{0},\,\alpha\in \mathfrak{A}:\;|f|_{m,\alpha}<\infty\}
\]
where 
\[
 |f|_{m,\alpha}:=\sup_{\substack{x\in\R^{d}\\ \beta\in\N_{0}^{d},|\beta|\leq m}}
 p_{\alpha}\bigl((\partial^{\beta})^{E}f(x)\bigr)(1+|x|^{2})^{m/2}.
\]
This is a special case of a)(i) with $k=\infty$, $\Omega=\R^{d}$, $J=\{1\}$ and $\nu_{1,m}(\beta,x):=(1+|x|^{2})^{m/2}$, 
$(\beta,x)\in\omega_{m}$, for all $m\in\N_{0}$.

d) Let $\mathfrak{K}:=\{K\subset\Omega\;|\;K\;\text{compact}\}$ 
and $(M_{p})_{p\in\N_{0}}$ be a sequence of positive real numbers. 
The space $\mathcal{E}^{(M_{p})}(\Omega,E)$ of ultradifferentiable functions of class $(M_{p})$ of Beurling-type is defined as 
\[
\mathcal{E}^{(M_{p})}(\Omega,E)
:=\{f\in \mathcal{C}^{\infty}(\Omega,E)\;|\;\forall\;K\in\mathfrak{K},\,h>0,\,\alpha\in \mathfrak{A}:\;|f|_{(K,h),\alpha}<\infty\}
\]
where 
\[
 |f|_{(K,h),\alpha}:=\sup_{\substack{x\in K \\ \beta\in\N_{0}^{d}}}
 p_{\alpha}\bigl((\partial^{\beta})^{E}f(x)\bigr)\frac{1}{h^{|\beta|}M_{|\beta|}}.
\]
This is a special case of a)(ii) with $J:=\mathfrak{K}\times\R_{>0}$ and 
$\nu_{(K,h),m}(\beta,x):=\chi_{K}(x)\frac{1}{h^{|\beta|}M_{|\beta|}}$, $(\beta,x)\in\omega_{m}$, 
for all $(K,h)\in J$ and $m\in\N_{0}$. 

e) Let $\mathfrak{K}$ and $(M_{p})_{p\in\N_{0}}$ be as in d). 
The space $\mathcal{E}^{\{M_{p}\}}(\Omega,E)$ of ultradifferentiable functions of class $\{M_{p}\}$ of Roumieu-type is defined as
\[
\mathcal{E}^{\{M_{p}\}}(\Omega,E)
:=\{f\in \mathcal{C}^{\infty}(\Omega,E)\;|\;\forall\;(K,H)\in J,\,\alpha\in \mathfrak{A}:\;|f|_{(K,H),\alpha}<\infty\}
\]
where 
\[
J:=\mathfrak{K}\times\{H=(H_{n})_{n\in\N}\;|\;\exists\;(h_{k})_{k\in\N},\,h_{k}>0,\,h_{k}\nearrow\infty
\;\forall\;n\in\N:\;
H_{n}=h_{1}\cdot\ldots\cdot h_{n}\}
\]
and
\[
 |f|_{(K,H),\alpha}:=\sup_{\substack{x\in K\\ \beta\in\N_{0}^{d}}}
 p_{\alpha}\bigl((\partial^{\beta})^{E}f(x)\bigr)\frac{1}{H_{|\beta|}M_{|\beta|}}
\]
(see \cite[Proposition 3.5, p.\ 675]{Kom9}). Again, this is a special case of a)(ii) with 
$\nu_{(K,H),m}(\beta,x):=\chi_{K}(x)\frac{1}{H_{|\beta|}M_{|\beta|}}$, $(\beta,x)\in\omega_{m}$, 
for all $(K,H)\in J$ and $m\in\N_{0}$.

f) Let $n\in\N$, $\beta_{i}\in\N_{0}^{d}$ with $|\beta_{i}|\leq\min(m,k)$ and 
$a_{i}\colon\Omega\to\K$ for $1\leq i\leq n$. We set 
\[
 P(\partial)^{E}\colon \mathcal{C}^{k}(\Omega,E)\to E^{\Omega},\; P(\partial)^{E}(f)(x):=\sum_{i=1}^{n}a_{i}(x)(\partial^{\beta_{i}})^{E}(f)(x).
\]
and obtain the (topological) subspace of $\mathcal{CV}^{k}(\Omega,E)$ given by 
\[
 \mathcal{CV}_{P(\partial)}^{k}(\Omega,E):=\{f\in\mathcal{CV}^{k}(\Omega,E)\;|\;f\in\ker P(\partial)^{E}\}.
\]
Choosing $\operatorname{AP}(\Omega,E):=\ker P(\partial)^{E}$, we see that this is also a $\dom$-space by (a). 
If $P(\partial)^{E}$ is the Cauchy-Riemann 
operator or the Laplacian we obtain the space of holomorphic resp.\ harmonic weighted functions. 
\end{exa}

The next lemma describes the topology of $\FV\varepsilon E$ in terms of the operators 
$T^{\K}_{m}$ with $m\in M$ and is a preparation to consider $\FV\varepsilon E$ as a 
topological subspace of $\FVE$ under certain conditions. 

\begin{lem}\label{lem:topology_eps}
Let $\FV$ be a $\dom$-space. Then the following holds. 
\begin{enumerate}
  \item [a)] $T^{\K}_{m,x}\in \FV'$ for all $m\in M$ and $x\in\omega_{m}$.
  \item [b)] The topology of $\FV\varepsilon E$ is given by the system of seminorms defined by
	\[
	\|u\|_{j,m,\alpha}:=\sup_{x\in\omega_{m}}p_{\alpha}\bigl(u(T^{\K}_{m,x})\bigr)\nu_{j,m}(x),
	\quad u\in \FV\varepsilon E,
	\]
	for $j\in J$, $m\in M$ and $\alpha\in \mathfrak{A}$.
\end{enumerate}
\end{lem}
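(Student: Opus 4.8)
The plan is to prove a) by a direct continuity estimate and b) by identifying the seminorms $\|\cdot\|_{j,m,\alpha}$ as the seminorms of uniform convergence on a distinguished family of equicontinuous subsets of $\FV'$. Throughout I write $|f|_{j,m}:=\sup_{x\in\omega_{m}}|T^{\K}_{m}(f)(x)|\nu_{j,m}(x)$ for the seminorms of $\FV$ (here $E=\K$, so $\mathfrak{A}$ is a singleton). For a), linearity of $T^{\K}_{m,x}$ is clear since it is the composition of the linear map $T^{\K}_{m}$ with evaluation at $x$. For continuity I fix $m\in M$ and $x\in\omega_{m}$, use \eqref{loc3} to pick $j\in J$ with $\nu_{j,m}(x)>0$, and estimate for $f\in\FV$
\[
|T^{\K}_{m,x}(f)|=\frac{1}{\nu_{j,m}(x)}\,|T^{\K}_{m}(f)(x)|\,\nu_{j,m}(x)\le \frac{1}{\nu_{j,m}(x)}\,|f|_{j,m},
\]
whence $T^{\K}_{m,x}\in\FV'$.

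For b) I first introduce, for $j\in J$ and $m\in M$, the set $N_{j,m}:=\{\nu_{j,m}(x)\,T^{\K}_{m,x}\mid x\in\omega_{m}\}\subseteq\FV'$. Since $\nu_{j,m}\ge 0$, one computes $\sup_{g\in N_{j,m}}|g(f)|=\sup_{x\in\omega_{m}}|T^{\K}_{m}(f)(x)|\nu_{j,m}(x)=|f|_{j,m}$, so that $N_{j,m}\subseteq U_{j,m}^{\circ}$ with $U_{j,m}:=\{f\in\FV\mid |f|_{j,m}\le 1\}$; as $U_{j,m}$ is a $0$-neighbourhood, $N_{j,m}$ is equicontinuous. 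Because the topology of $\FV\varepsilon E=L_{e}(\FV_{\kappa}',E)$ is that of uniform convergence on equicontinuous subsets of $\FV'$, the map $u\mapsto\sup_{g\in N_{j,m}}p_{\alpha}(u(g))$ is a continuous seminorm on $\FV\varepsilon E$, and by linearity of $u$ together with $\nu_{j,m}(x)\ge 0$ it equals $\|u\|_{j,m,\alpha}$. This already shows that the $\|\cdot\|_{j,m,\alpha}$ are continuous seminorms on $\FV\varepsilon E$ (in particular finite), so the topology they generate is coarser than that of $\FV\varepsilon E$.

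The converse is the substantial part. Given an equicontinuous $H\subseteq\FV'$ and $\alpha\in\mathfrak{A}$, I would use directedness of $(|\cdot|_{j,m})$ to find $j,m$ and $C>0$ with $H\subseteq C\,U_{j,m}^{\circ}$. Since $U_{j,m}=N_{j,m}^{\circ}$, the bipolar theorem gives $U_{j,m}^{\circ}=N_{j,m}^{\circ\circ}=\oacx(N_{j,m})$ with closure in $\sigma(\FV',\FV)$. On the algebraic hull $\acx(N_{j,m})$ the bound $p_{\alpha}(u(f'))\le\|u\|_{j,m,\alpha}$ is immediate from absolute homogeneity and convexity of the seminorm $p_{\alpha}\circ u$. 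It then remains to push this estimate onto the $\sigma(\FV',\FV)$-closure, and here I would combine the continuity of $u$ on $\FV_{\kappa}'$ with the classical fact that on an equicontinuous subset of $\FV'$ the topologies $\sigma(\FV',\FV)$ and $\kappa(\FV',\FV)$ coincide (both equalling uniform convergence on precompact sets). Consequently $p_{\alpha}\circ u$ is $\sigma(\FV',\FV)$-continuous on the equicontinuous set $U_{j,m}^{\circ}$, and since $\acx(N_{j,m})$ is $\sigma$-dense there, the estimate extends to all of $U_{j,m}^{\circ}$, yielding $\sup_{f'\in H}p_{\alpha}(u(f'))\le C\|u\|_{j,m,\alpha}$. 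Together with the previous paragraph this proves that the two systems of seminorms define the same topology. The main obstacle is exactly this last extension: the $\varepsilon$-product is built with the finer $\kappa$-topology on $\FV'$ while the bipolar closure lives in the coarser $\sigma(\FV',\FV)$, so $\kappa$-continuity of $u$ does not by itself control weak$^{\ast}$ limits, and it is the coincidence of $\sigma$ and $\kappa$ on equicontinuous sets that bridges the gap.
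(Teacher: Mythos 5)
Your proof is correct and takes essentially the same route as the paper: both identify the polar of the unit ball $B_{j,m}=\{f\in\FV\,|\,|f|_{j,m}\leq 1\}$ as the bipolar, i.e.\ the closed absolutely convex hull, of the weighted point evaluations $D_{j,m}=\{\nu_{j,m}(x)T^{\K}_{m,x}\,|\,x\in\omega_{m}\}$, use directedness to see that these polars govern the $\varepsilon$-topology, and extend the elementary estimate on $\acx(D_{j,m})$ to the polar by density plus continuity of $u$. The only cosmetic difference is that the paper takes the bipolar closure directly in $\kappa(\FV',\FV)$ (legitimate since $\kappa$ is compatible with the duality) so that $\kappa$-continuity of $u$ applies at once, and cites Jarchow for the fundamental system of equicontinuous sets, whereas you take the $\sigma(\FV',\FV)$-closure and bridge the gap via the coincidence of $\sigma$ and $\kappa$ on equicontinuous sets, proving the two topological inclusions by hand.
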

\begin{proof}
a) For $m\in M$ and $x\in\omega_{m}$ there exists $j\in J$ 
 such that $\nu_{j,m}(x)>0$ by \eqref{loc3}
 implying for every $f\in\FV$ that
 \[ 
 |T^{\K}_{m,x}(f)|=\frac{1}{\nu_{j,m}(x)}|T^{\K}_{m}(f)(x)|\nu_{j,m}(x)
 \leq \frac{1}{\nu_{j,m}(x)}|f|_{j,m}.
 \]
 
b) We set $D_{j,m}:=\{T^{\K}_{m,x}(\cdot)\nu_{j,m}(x)\;|\;x\in\omega_{m}\}$ 
 and $B_{j,m}:=\{f\in\FV\;|\;|f|_{j,m}\leq 1\}$ for every $j\in J$ and $m\in M$.
 We claim that $\acx(D_{j,m})$ is dense in the polar $B^{\circ}_{j,m}$ 
 with respect to $\kappa(\FV',\FV)$.
 The observation 
 \begin{align*}
 D_{j,m}^{\circ}
 &=\{T^{\K}_{m,x}(\cdot)\nu_{j,m}(x)\;|\;x\in\omega_{m}\}^{\circ}\\
 &=\{f\in\FV\;|\;\forall x\in\omega_{m}: |T^{\K}_{m}(f)(x)|
 \nu_{j,m}(x)\leq 1\}\\
 &=\{f\in\FV\;|\; |f|_{j,m}\leq 1\}
 =B_{j,m}
 \end{align*}
 yields
\[
 \oacx(D_{j,m})^{\kappa(\FV',\FV)}
=(D_{j,m})^{\circ\circ}=B^{\circ}_{j,m}
\]
by the bipolar theorem. 
By \cite[8.4, p.\ 152, 8.5, p.\ 156-157]{Jarchow} the system of seminorms defined by 
\[
q_{j,m,\alpha}(u):=\sup_{y\in B_{j,m}^{\circ}}p_{\alpha}\bigl(u(y)\bigr),\quad u\in \FV\varepsilon E,
\]
for $j\in J$, $m\in M$ and $\alpha\in \mathfrak{A}$ gives the topology on $\FV\varepsilon E$ (here it is used 
that the system of seminorms $(|\cdot|_{j,m})$ of $\FV$ is directed). 
We may replace $B_{j,m}^{\circ}$ by a $\kappa(\FV',\FV)$-dense subset 
as every $u\in\FV\varepsilon E$ is continuous on $B_{j,m}^{\circ}$. Therefore we obtain
\[
q_{j,m,\alpha}(u)=\sup\bigl\{p_{\alpha}\bigl(u(y)\bigr)\;|\;
y\in\acx(D_{j,m})\bigr\}.
\]
For $y\in\acx(D_{j,m})$ there are $n\in\N$, $\lambda_{k}\in\K$, $x_{k}\in\omega_{m}$, $1\leq k\leq n$, with
$\sum^{n}_{k=1}{|\lambda_{k}|}\leq 1$ such that $y=\sum^{n}_{k=1}\lambda_{k}T^{\K}_{m,x_{k}}(\cdot)\nu_{j,m}(x_{k})$. 
Then we have for every $u\in\FV\varepsilon E$
\[
p_{\alpha}\bigl(u(y)\bigr)\leq \sum^{n}_{k=1}|\lambda_{k}|p_{\alpha}\bigl(u(T^{\K}_{m,x_{k}})\bigr)\nu_{j,m}(x_{k})
\leq \|u\|_{j,m,\alpha}
\]
thus $q_{j,m,\alpha}(u)\leq \|u\|_{j,m,\alpha}$. On the other hand, we derive
\[
q_{j,m,\alpha}(u)\geq\sup_{y\in D_{j,m}}p_{\alpha}\bigl(u(y)\bigr)
=\sup_{x\in\omega_{m}}p_{\alpha}\bigl(u(T^{\K}_{m,x})\bigr)\nu_{j,m}(x)=\|u\|_{j,m,\alpha}.
\]
\end{proof}

For the lcHs $E$ over $\K$ we want to define a natural $E$-valued version of a $\dom$-space $\FV=\mathcal{FV}(\Omega,\K)$.
The natural $E$-valued version of $\FV$ should be a $\dom$-space $\FVE$ such that there is a canonical relation between 
the families $(T^{\K}_{m})$ and $(T^{E}_{m})$. 
This canoncial relation 
will be explained in terms of their interplay with the map 
\[
S\colon \FV\varepsilon E \to E^{\Omega},\; u\longmapsto [x\mapsto u(\delta_{x})].
\]
Looking at \prettyref{ex:weighted_smooth_functions}, we obtain from the lemma above 
that the topology of $\mathcal{CV}^{k}(\Omega)\varepsilon E$ 
is given by the seminorms 
\[
\|u\|_{j,m,\alpha}=\sup_{(\beta,x)\in\omega_{m}}p_{\alpha}\bigl(u(\delta_{x}\circ(\partial^{\beta})^{\K})\bigr)\nu_{j,m}(\beta,x),
\quad u\in \mathcal{CV}^{k}(\Omega)\varepsilon E,
\]
for $j\in J$, $m\in \N_{0}$ and $\alpha\in \mathfrak{A}$. 
Comparing these seminorms with the seminorms of $\mathcal{CV}^{k}(\Omega,E)$, 
we note that $S\colon\mathcal{CV}^{k}(\Omega)\varepsilon E\to \mathcal{CV}^{k}(\Omega,E)$ is a topological isomorphism 
into if $S(u)\in\mathcal{C}^{k}(\Omega,E)$ and 
\[
\bigl((\partial^{\beta})^{E}S(u)\bigr)(x)=u(\delta_{x}\circ(\partial^{\beta})^{\K}),\quad (\beta,x)\in\omega_{m},
\]
for all $u\in \mathcal{CV}^{k}(\Omega)\varepsilon E$ and $m\in\N_{0}$. 
This observation gives rise to the following definition. 

\begin{defn}[{generator, consistent}]\label{def:consist} 
Let $\FV$ and $\FVE$ be $\dom$-spaces such that $M:=M(\K)=M(E)$.
\begin{enumerate}
 \item [a)] We call $(T^{E}_{m},T^{\K}_{m})_{m\in M}$ a generator for $(\FV,E)$, in short, $(\mathcal{FV},E)$.
 \item [b)] We call $(T^{E}_{m},T^{\K}_{m})_{m\in M}$ consistent if we have
 for every $u\in\FV\varepsilon E$, $m\in M$ and $x\in\omega_{m}$:
  \begin{enumerate}
  \item[(i)] $S(u)\in \operatorname{AP}(\Omega,E)$,
  \item[(ii)] $S(u)\in\dom T^{E}_{m}$ and $\bigl(T^{E}_{m}S(u)\bigr)(x)=u(T^{\K}_{m,x})$. 
 \end{enumerate}
\end{enumerate}
\end{defn}

More precisely, $T^{\K}_{m,x}$ in (ii) means the restriction of $T^{\K}_{m,x}$ to $\FV$. 
Consistency is our measure whether we consider a space $\FVE$ as a natural $E$-valued version of 
a space $\FV$ of scalar-valued functions. 

\begin{thm}\label{thm:linearisation}
Let $(T^{E}_{m},T^{\K}_{m})_{m\in M}$ be a consistent generator for $(\mathcal{FV},E)$. 
Then the map $S\colon \FV\varepsilon E\to\FVE$ is a topological isomorphism into.
\end{thm}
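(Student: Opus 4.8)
The plan is to reduce everything to the single seminorm identity
\[
|S(u)|_{j,m,\alpha}=\|u\|_{j,m,\alpha},\qquad u\in\FV\varepsilon E,\ j\in J,\ m\in M,\ \alpha\in\mathfrak{A},
\]
and then read off well-definedness, the embedding property and injectivity from it. Since $S$ is evidently linear, this identity is the entire content of the theorem; the rest is bookkeeping built on \prettyref{lem:topology_eps} and the consistency of the generator (\prettyref{def:consist}).

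First I would check that $S$ actually maps into $\FVE$. Fix $u\in\FV\varepsilon E$. By consistency~(i) we have $S(u)\in\operatorname{AP}(\Omega,E)$, and by consistency~(ii) we have $S(u)\in\dom T^{E}_{m}$ for every $m\in M$; together these give $S(u)\in\mathcal{F}(\Omega,E)$. It remains to see that the seminorms $|S(u)|_{j,m,\alpha}$ are finite. Here I invoke the defining equation of consistency, $\bigl(T^{E}_{m}S(u)\bigr)(x)=u(T^{\K}_{m,x})$, to compute
\[
|S(u)|_{j,m,\alpha}=\sup_{x\in\omega_{m}}p_{\alpha}\bigl(T^{E}_{m}(S(u))(x)\bigr)\nu_{j,m}(x)=\sup_{x\in\omega_{m}}p_{\alpha}\bigl(u(T^{\K}_{m,x})\bigr)\nu_{j,m}(x)=\|u\|_{j,m,\alpha},
\]
where the right-hand side is exactly the seminorm furnished by \prettyref{lem:topology_eps}~b). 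Since $\|u\|_{j,m,\alpha}$ is finite for $u\in\FV\varepsilon E$ (it is one of the seminorms generating the topology of that space), we conclude $S(u)\in\FVE$, so $S$ is well-defined.

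The same identity does the rest. As $(\|\cdot\|_{j,m,\alpha})$ generates the topology of $\FV\varepsilon E$ by \prettyref{lem:topology_eps}~b) while $(|\cdot|_{j,m,\alpha})$ generates the topology of $\FVE$, the equality above shows that $S$ pulls the subspace topology of $S(\FV\varepsilon E)\subset\FVE$ back precisely to the topology of $\FV\varepsilon E$; hence $S$ is a homeomorphism onto its range, i.e.\ a topological isomorphism into, once injectivity is known. For injectivity I would use that $\FV\varepsilon E$ is Hausdorff: each singleton $\{f'\}$ with $f'\in\FV'$ is equicontinuous, so the seminorms $u\mapsto p_{\alpha}(u(f'))$ already separate points. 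Thus if $S(u)=0$, then $\|u\|_{j,m,\alpha}=|S(u)|_{j,m,\alpha}=0$ for all $j,m,\alpha$, and since this generating family is therefore separating, $u=0$.

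I do not expect a genuine obstacle here: once consistency is available, the proof is essentially the displayed computation. The only point demanding a little care is to cite, rather than re-derive, that $(\|\cdot\|_{j,m,\alpha})$ is a \emph{generating} family for $\FV\varepsilon E$ — which is precisely the role of the preceding lemma — and to note that in the Hausdorff space $\FV\varepsilon E$ a generating family is automatically separating, so that the vanishing of all $\|u\|_{j,m,\alpha}$ forces $u=0$.
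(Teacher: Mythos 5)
Your proposal is correct and follows essentially the same route as the paper: consistency gives $S(u)\in\operatorname{AP}(\Omega,E)\cap\bigcap_{m}\dom T^{E}_{m}$ together with $\bigl(T^{E}_{m}S(u)\bigr)(x)=u(T^{\K}_{m,x})$, and combining this with \prettyref{lem:topology_eps}~b) yields the seminorm identity $|S(u)|_{j,m,\alpha}=\|u\|_{j,m,\alpha}<\infty$, from which well-definedness, continuity of $S$, injectivity and continuity of the inverse on the range all follow. The only difference is cosmetic: the paper leaves the injectivity step implicit in the identity, whereas you spell it out via Hausdorffness of $\FV\varepsilon E$ (singletons of $\FV'$ being equicontinuous), which is a valid and slightly more explicit justification of the same point.
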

\begin{proof}
First, we show that $S(\FV\varepsilon E)\subset \FVE$. Let $u\in\FV\varepsilon E$. Due to the consistency of 
$(T^{E}_{m},T^{\K}_{m})_{m\in M}$ we have $S(u)\in\operatorname{AP}(\Omega,E)\cap\dom T^{E}_{m}$ and
\[
 \bigl(T^{E}_{m}S(u)\bigr)(x)=u(T^{\K}_{m,x}),\quad m\in M\; x\in \omega_{m}.
\]
Furthermore, we get by \prettyref{lem:topology_eps} b) for every $j\in J$, $m\in M$ and $\alpha\in\mathfrak{A}$
\begin{equation}\label{thm15.1}
|S(u)|_{j,m,\alpha}=\sup_{x\in\omega_{m}}p_{\alpha}\bigl(T^{E}_{m}(S(u))(x)\bigr)\nu_{j,m}(x)
=\|u\|_{j,m,\alpha}<\infty
\end{equation}
implying $S(u)\in \FVE$ and the continuity of $S$.
Moreover, we deduce from \eqref{thm15.1} that $S$ is injective and that the inverse of $S$ on the range of $S$ is also continuous.
\end{proof}

\begin{rem}\label{rem:isometry}
If $J$, $M$ and $\mathfrak{A}$ are countable, then $S$ is an isometry with respect to the induced metrics on 
$\FVE$ and $\FV\varepsilon E$ by \eqref{thm15.1}. 
\end{rem}

The basic idea for \prettyref{thm:linearisation} was derived from analysing the proof of an analogous statement 
for Bierstedt's spaces $\mathcal{CV}(\Omega,E)$ and $\mathcal{CV}_{0}(\Omega,E)$ of weighted continuous functions already mentioned in 
the introduction (see \cite[4.2 Lemma, 4.3 Folgerung, p.\ 199-200]{B1} and \cite[2.1 Satz, p.\ 137]{B2}). 
Now, we try to answer the natural question. When is $S$ surjective? A weaker concept to define a natural 
$E$-valued version of $\FV$ will help us to answer this question. 
Let $\FV$ be a $\dom$-space. 
We define the vector space of $E$-valued weak $\mathcal{FV}$-functions by
\[
\FVE_{\sigma}:=\{f\colon \Omega\to E\;|\;\forall\;e'\in E':\;e'\circ f\in \FV\}.
\]
Moreover, for $f\in \FVE_{\sigma}$ we define the linear map
\[
R_{f}\colon E'\to \FV,\; R_{f}(e'):=e'\circ f, 
\]
and the dual map 
\[
R_{f}^{t}\colon \FV'\to E'^{\star}, \; f'\longmapsto \bigl[ e'\mapsto f'\bigl(R_{f}(e')\bigr) \bigr],
\]
where $E'^{\star}$ is the algebraic dual of $E'$. Furthermore, we set
\[
\FVE_{\kappa}:=\{f\in \FVE_{\sigma}\;|\;\forall\; \alpha\in \mathfrak{A}:
\;R_{f}(B_{\alpha}^{\circ})\;\text{relatively compact in}\;\FV\}
\]
where $B_{\alpha}:=\{x\in E\;|\; p_{\alpha}(x)<1\}$ for $\alpha\in\mathfrak{A}$.
Next, we give a sufficient condition for the inclusion $\FVE\subset\FVE_{\sigma}$ 
by means of the family $(T^{E}_{m},T^{\K}_{m})_{m\in M}$. 

\begin{defn}[{strong}]\label{def:strong}
Let $(T^{E}_{m},T^{\K}_{m})_{m\in M}$ be a generator for $(\mathcal{FV},E)$.
We call $(T^{E}_{m},T^{\K}_{m})_{m\in M}$ strong if the following is valid for every 
$e'\in E',$ $f\in \FVE$ and $m\in M$:
 \begin{enumerate}
  \item[(i)] $e'\circ f\in \operatorname{AP}(\Omega)$,
  \item[(ii)] $e'\circ f\in\dom T^{\K}_{m}$ and $T^{\K}_{m}(e'\circ f)=e'\circ T^{E}_{m}(f)$ on $\omega_{m}$. 
 \end{enumerate}
\end{defn}

\begin{lem}\label{lem:strong_is_weak}
If $(T^{E}_{m},T^{\K}_{m})_{m\in M}$ is a strong generator for $(\mathcal{FV},E)$, 
then $\FVE\subset \FVE_{\sigma}$ and
\begin{equation}\label{T.3.1}
\sup_{e'\in B_{\alpha}^{\circ}}|R_{f}(e')|_{j,m}=|f|_{j,m,\alpha}
=\sup_{x\in N_{j,m}(f)}p_{\alpha}(x)
\end{equation}
for every $f\in\FVE$, $j\in J$, $m\in M$ and $\alpha\in\mathfrak{A}$ where
\[
N_{j,m}(f):=\{T^{E}_{m}(f)(x)\nu_{j,m}(x)\;|\;x\in\omega_{m}\}.
\]
\end{lem}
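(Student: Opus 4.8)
The plan is to dispatch the inclusion $\FVE\subset\FVE_{\sigma}$ first and then to read the chain \eqref{T.3.1} from right to left, the only substance being a single Hahn--Banach duality.

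First I would fix $f\in\FVE$ and $e'\in E'$ and verify $e'\circ f\in\FV$. Strongness~(i) gives $e'\circ f\in\operatorname{AP}(\Omega)$ and strongness~(ii) gives $e'\circ f\in\dom T^{\K}_{m}$ with $T^{\K}_{m}(e'\circ f)=e'\circ T^{E}_{m}(f)$ on $\omega_{m}$ for each $m\in M$, so $e'\circ f$ lands in $\mathcal{F}(\Omega)$. To bound its seminorms I would exploit the continuity of $e'$: there are $\alpha\in\mathfrak{A}$ and $C>0$ with $|e'(\cdot)|\le C p_{\alpha}(\cdot)$, whence
\[
|e'\circ f|_{j,m}=\sup_{x\in\omega_{m}}|e'\bigl(T^{E}_{m}(f)(x)\bigr)|\nu_{j,m}(x)\le C|f|_{j,m,\alpha}<\infty .
\]
This shows $e'\circ f\in\FV$, i.e.\ $\FVE\subset\FVE_{\sigma}$, and in particular makes $R_{f}(e')\in\FV$ meaningful for every $e'$.

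The rightmost equality of \eqref{T.3.1} is then purely formal: by positive homogeneity of $p_{\alpha}$ together with $\nu_{j,m}\ge 0$ one has $p_{\alpha}\bigl(T^{E}_{m}(f)(x)\nu_{j,m}(x)\bigr)=p_{\alpha}\bigl(T^{E}_{m}(f)(x)\bigr)\nu_{j,m}(x)$, so taking the supremum over $N_{j,m}(f)$ reproduces $|f|_{j,m,\alpha}$ verbatim. For the left equality I would insert strongness~(ii) into $|R_{f}(e')|_{j,m}$, interchange the two suprema (harmless, as both range over nonnegative quantities), and apply the duality $p_{\alpha}(y)=\sup_{e'\in B_{\alpha}^{\circ}}|e'(y)|$ valid for the continuous seminorm $p_{\alpha}$ whose open unit ball is $B_{\alpha}$:
\[
\sup_{e'\in B_{\alpha}^{\circ}}|R_{f}(e')|_{j,m}
=\sup_{x\in\omega_{m}}\Bigl(\sup_{e'\in B_{\alpha}^{\circ}}|e'\bigl(T^{E}_{m}(f)(x)\bigr)|\Bigr)\nu_{j,m}(x)
=\sup_{x\in\omega_{m}}p_{\alpha}\bigl(T^{E}_{m}(f)(x)\bigr)\nu_{j,m}(x)
=|f|_{j,m,\alpha}.
\]

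The only non-bookkeeping ingredient is this duality formula, which rests on identifying the polar $B_{\alpha}^{\circ}=\{e'\in E'\mid |e'(\cdot)|\le p_{\alpha}(\cdot)\}$ and then quoting Hahn--Banach; since $p_{\alpha}$ is a genuine continuous seminorm this is entirely standard. Hence I expect no real obstacle beyond keeping the order of the suprema and the strongness identity $T^{\K}_{m}(e'\circ f)=e'\circ T^{E}_{m}(f)$ straight throughout.
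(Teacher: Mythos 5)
Your proposal is correct and follows essentially the same route as the paper: both rewrite $|R_{f}(e')|_{j,m}$ via the strongness identity $T^{\K}_{m}(e'\circ f)=e'\circ T^{E}_{m}(f)$ and then invoke the Hahn--Banach duality $p_{\alpha}(y)=\sup_{e'\in B_{\alpha}^{\circ}}|e'(y)|$ (the paper cites \cite[Proposition 22.14, p.\ 256]{meisevogt1997} for exactly this). The only cosmetic difference is how finiteness of $|e'\circ f|_{j,m}$ is obtained -- you use the estimate $|e'|\leq Cp_{\alpha}$ directly, while the paper deduces it from the boundedness, hence weak boundedness, of $N_{j,m}(f)$ -- but these are interchangeable one-line arguments.
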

\begin{proof}
Let $f\in \FVE$. Since $(T^{E}_{m},T^{\K}_{m})_{m\in M}$ is a strong generator, 
we have $e'\circ f\in\operatorname{AP}(\Omega)\cap\operatorname{dom}T^{\K}_{m}$ for every $m\in M$ and 
$e'\in E'$. Moreover, we have
\begin{align}\label{T.3.2}
|R_{f}(e')|_{j,m}&=|e'\circ f|_{j,m}=\sup_{x\in\omega_{m}}
\bigl|T^{\K}_{m}(e'\circ f)(x)\bigr|\nu_{j,m}(x)\notag\\
& =\sup_{x\in\omega_{m}}\bigl|e'\bigl(T^{E}_{m}(f)(x)\bigr)\bigr|\nu_{j,m}(x)
=\sup_{x\in N_{j,m}(f)}|e'(x)|
\end{align}
for every $j\in J$ and $m\in M$. Further, we observe that
\[
\sup_{e'\in B_{\alpha}^{\circ}}|R_{f}(e')|_{j,m}=|f|_{j,m,\alpha}
=\sup_{x\in N_{j,m}(f)}p_{\alpha}(x)<\infty
\]
for every $j\in J$, $m\in M$ 
and $\alpha\in\mathfrak{A}$ where the first equality holds due to \cite[Proposition 22.14, p.\ 256]{meisevogt1997}. 
In particular, we obtain that $N_{j,m}(f)$ is bounded in $E$ and thus weakly bounded implying that the right-hand side 
of \eqref{T.3.2} is finite. Hence we conclude $f\in\FVE_{\sigma}$.
\end{proof}

Now, we phrase some sufficient conditions for $\FVE\subset\FVE_{\kappa}$ 
to hold which is one of the key points regarding the surjectivity of $S$.

\begin{lem}\label{lem:FVE_rel_comp}
If $(T^{E}_{m},T^{\K}_{m})_{m\in M}$ is a strong generator for $(\mathcal{FV},E)$ 
and one of the following conditions is fulfilled, then $\FVE\subset\FVE_{\kappa}$.
\begin{enumerate}
\item [a)] $\FV$ is a semi-Montel space.
\item [b)] 
\[
\forall\;f\in\FVE,\, j\in J,\,m\in M\;\exists\; K\in\gamma(E):\;N_{j,m}(f)\subset K.
\]
\item [c)] $E$ is a semi-Montel or Schwartz space.
\item [d)] There are a set $X$, a family $\mathfrak{K}$ of sets and a map 
$\pi\colon\bigcup_{m\in M}\omega_{m} \to X$ such that $\bigcup_{K\in\mathfrak{K}}K\subset X$ and
the functions of $\FVE$ vanish at infinity in the weighted topology with respect to $(\pi,\mathfrak{K})$, i.e.\ 
every $f\in\FVE$ fulfils:
\begin{align}\label{van.a.inf}
&\forall\;\varepsilon >0,\, j\in J,\, m\in M,\,\alpha\in\mathfrak{A}\;\exists\;K\in\mathfrak{K}:\notag\\
&(i)\; \sup_{\substack{x\in\omega_{m},\\ \pi(x)\notin K}}p_{\alpha}\bigl(T^{E}_{m}(f)(x)\bigr)\nu_{j,m}(x)<\varepsilon\\
&(ii)\; N_{\pi\subset K,j,m}(f):=\{T^{E}_{m}(f)(x)\nu_{j,m}(x)\;|\;x\in\omega_{m},\,\pi(x)\in K\}\in\gamma(E)\notag
\end{align}
\end{enumerate}
\end{lem}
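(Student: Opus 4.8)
The plan is to reduce everything to the identity established in \prettyref{lem:strong_is_weak}. Since $(T^{E}_{m},T^{\K}_{m})_{m\in M}$ is a strong generator, we already have $\FVE\subset\FVE_{\sigma}$, so for $f\in\FVE$ the map $R_{f}\colon E'\to\FV$ is well-defined, and by \eqref{T.3.2} it satisfies
\[
|R_{f}(e')|_{j,m}=\sup_{y\in N_{j,m}(f)}|e'(y)|,\qquad e'\in E',\;j\in J,\;m\in M.
\]
It then remains to show, under any of a)--d), that $R_{f}(B_{\alpha}^{\circ})$ is relatively compact in $\FV$ for every $\alpha\in\mathfrak{A}$, which is exactly the condition defining $\FVE_{\kappa}$. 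Case a) is immediate: by \eqref{T.3.1} we have $\sup_{e'\in B_{\alpha}^{\circ}}|R_{f}(e')|_{j,m}=|f|_{j,m,\alpha}<\infty$ for all $j,m$, so $R_{f}(B_{\alpha}^{\circ})$ is bounded in $\FV$, and in a semi-Montel space every bounded set is relatively compact.

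For b), c) and d) I would use a single continuity principle. The polar $B_{\alpha}^{\circ}$ is equicontinuous, hence $\sigma(E',E)$-compact by Alaoglu--Bourbaki, so it suffices to prove that the restriction $R_{f}\colon(B_{\alpha}^{\circ},\sigma(E',E))\to\FV$ is continuous; then $R_{f}(B_{\alpha}^{\circ})$ is compact. For a net $e'_{\tau}\to e'$ in $B_{\alpha}^{\circ}$ with respect to $\sigma(E',E)$, linearity of $R_{f}$ and the displayed identity give
\[
|R_{f}(e'_{\tau})-R_{f}(e')|_{j,m}=\sup_{y\in N_{j,m}(f)}|(e'_{\tau}-e')(y)|.
\]
The standard fact I rely on is that an equicontinuous net of functionals converging pointwise converges \emph{uniformly on precompact sets}. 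In case b) the set $N_{j,m}(f)$ is contained in some $K\in\gamma(E)$ and is thus precompact, so the right-hand side tends to $0$ and continuity follows. Case c) reduces to b): by \eqref{T.3.1} the set $N_{j,m}(f)$ is bounded in $E$, and in a semi-Montel space bounded sets are relatively compact while in a Schwartz space bounded sets are precompact; in either case $N_{j,m}(f)$ is precompact.

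The remaining case d) is the main obstacle, since $N_{j,m}(f)$ need no longer be precompact and the vanishing at infinity must be exploited. Fixing $\varepsilon>0$, $j$, $m$, $\alpha$ and choosing $K\in\mathfrak{K}$ as in \eqref{van.a.inf}, I would split $\omega_{m}$ into $\{x\mid\pi(x)\notin K\}$ and $\{x\mid\pi(x)\in K\}$. On the first part, $e'\in B_{\alpha}^{\circ}$ forces $|e'(y)|\leq p_{\alpha}(y)$ for all $y\in E$, whence $|(e'_{\tau}-e')(y)|\leq 2p_{\alpha}(y)$, and condition (i) bounds the corresponding supremum by $2\varepsilon$. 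On the second part the set $N_{\pi\subset K,j,m}(f)$ is precompact by condition (ii), so the uniform-convergence fact drives the supremum over this part to $0$ along the net. Combining, $\limsup_{\tau}|R_{f}(e'_{\tau})-R_{f}(e')|_{j,m}\leq 2\varepsilon$, and letting $\varepsilon\to 0$ yields convergence to $0$, hence continuity of $R_{f}$ on $B_{\alpha}^{\circ}$ and compactness of its image. The technical heart throughout is the passage from pointwise to uniform convergence on precompact sets for the equicontinuous family $B_{\alpha}^{\circ}$, which is precisely why $\gamma(E)$, rather than mere boundedness, is the right hypothesis in b)--d).
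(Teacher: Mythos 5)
Your proof is correct, and it follows the paper's strategy for a) and c) exactly: a) bounded sets are relatively compact in semi-Montel $\FV$, and c) reduces to b) via boundedness of $N_{j,m}(f)$ from \eqref{T.3.1}. For b) your net argument is essentially the paper's argument unpacked: the paper states it structurally, namely that \eqref{T.3.2} gives $R_{f}\in L(E_{\gamma}',\FV)$ and that $B_{\alpha}^{\circ}$ is relatively compact in $E_{\gamma}'$ by Alao\u{g}lu--Bourbaki, whereas you verify by hand that $\sigma(E',E)$-convergent nets in the equicontinuous set $B_{\alpha}^{\circ}$ are mapped to convergent nets in $\FV$; the underlying fact (coincidence of $\sigma(E',E)$ and $\gamma(E',E)$ on equicontinuous sets) is the same. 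The genuine divergence is in d): the paper first proves that $N_{j,m}(f)$ itself is precompact, covering $N_{\pi\subset K,j,m}(f)$ by finitely many translates $P+V$ and absorbing the tail $N_{\pi\nsubset K,j,m}(f)$ into $B_{\varepsilon,\alpha}$, and then simply invokes b); you instead fold the same splitting directly into the convergence estimate, bounding the tail by $2\varepsilon$ via $|e'(y)|\leq p_{\alpha}(y)$ on $B_{\alpha}^{\circ}$ and using uniform convergence only on the precompact part $N_{\pi\subset K,j,m}(f)$. Both are valid, but the paper's route has a by-product your route does not deliver: the precompactness of $N_{j,m}(f)$ under condition d) is recorded as \prettyref{rem:condition_d_precomp} and is reused later (in \prettyref{cor:full_linearisation_E_quasi_compl} and \prettyref{ex:hoelder}), so the paper's detour through precompactness of the set, rather than through continuity alone, is what the subsequent results actually rely on.
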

\begin{proof}
Let $f\in \FVE$. By virtue of \prettyref{lem:strong_is_weak} we already have $f\in \FVE_{\sigma}$.

a) For every $j\in J$, $m\in M$ and $\alpha\in \mathfrak{A}$ 
we derive from 
\[
\sup_{e'\in B_{\alpha}^{\circ}}|R_{f}(e')|_{j,m}\underset{\eqref{T.3.1}}{=}|f|_{j,m,\alpha}<\infty
\]
that $R_{f}(B^{\circ}_{\alpha})$ is bounded and thus relatively compact in the semi-Montel space $\FV$.

b) It follows from \eqref{T.3.2} that $R_{f}\in L(E_{\gamma}',\FV)$. 
Further, the polar $B_{\alpha}^{\circ}$ is relatively compact in $E_{\gamma}'$ for every $\alpha\in\mathfrak{A}$ by the Alao\u{g}lu-Bourbaki theorem. 
The continuity of $R_{f}$ implies that $R_{f}(B_{\alpha}^{\circ})$ is relatively compact as well.

c) Let $j\in J$ and $m\in M$. The set $K:=N_{j,m}(f)$ is bounded in $E$ by \eqref{T.3.1}. If $E$ is semi-Montel or Schwartz, 
we deduce that $K$ is already precompact in $E$ since it is relatively compact if $E$ is semi-Montel resp.\ by 
\cite[10.4.3 Corollary, p.\ 202]{Jarchow} if $E$ is Schwartz. Hence the statement follows from b). 

d) We show that the set $N_{j,m}(f)$ is precompact in $E$ for every $f\in\FVE$, $j\in J$ and $m\in M$. 
Let $V$ be a $0$-neighbourhood in $E$. Then there are $\alpha\in\mathfrak{A}$ and $\varepsilon>0$ such 
that $B_{\varepsilon,\alpha}\subset V$ where $B_{\varepsilon,\alpha}:=\{x\in E\;|\;p_{\alpha}(x)<\varepsilon\}$. 
Due to \eqref{van.a.inf} there is $K\in\mathfrak{K}$ such that the set 
\[
\qquad\;\; N_{\pi\nsubset K,j,m}(f):=\{T^{E}_{m}(f)(x)
 \nu_{j,m}(x)\;|\;x\in\omega_{m},\;\pi(x)\notin K\}
\]
is contained in $B_{\varepsilon,\alpha}$. Further, the precompactness of $N_{\pi\subset K,j,m}(f)$ implies 
that there exists a finite set $P\subset E$ such that $N_{\pi\subset K,j,m}(f) \subset P+V$. 
Hence we conclude
\begin{align*}
 N_{j,m}(f)&= \bigl( N_{\pi\nsubset K,j,m}(f)\cup N_{\pi\subset K,j,m}(f)\bigr) \\
 &\subset \bigl(B_{\varepsilon,\alpha}\cup (P+V)\bigr)
 \subset\bigl( V\cup (P+V)\bigr)
 =(P\cup\{0\})+V
\end{align*}
which means that $N_{j,m}(f)$ is precompact proving the statement by b). 
\end{proof}

\begin{rem}\label{rem:condition_d_precomp}
If condition d) of \prettyref{lem:FVE_rel_comp} is fulfilled, then $N_{j,m}(f)$ is 
precompact in $E$ for every $f\in\FVE$, $j\in J$ and $m\in M$. 
\end{rem}

Concerning d), the most common case is that $\mathfrak{K}$ consists of the compact sets of $\Omega$ and 
$\pi$ is a projection on $X:=\Omega$. But we consider other examples in \prettyref{ex:hoelder} as well. 
However, let us take a look at the most prominent example of $k$-times continuously partially differentiable functions 
that vanish with all their derivatives when weighted at infinity. 
Let $k\in\N_{\infty}$, $\Omega\subset\R^{d}$ be open, 
$\omega_{m}:=M_{m}\times\Omega$ with $M_{m}:=\{\beta\in\N_{0}^{d}\;|\;|\beta|\leq \min(m,k)\}$ for all $m\in\N_{0}$
and $\mathcal{V}^{k}:=(\nu_{j,m})_{j\in J, m\in\N_{0}}$ be a directed family of weights on $(\omega_{m})_{m\in\N_{0}}$. 
We call $\mathcal{V}^{k}$ locally bounded on $\Omega$ if 
\[
\forall\;K\subset\Omega\;\text{compact},\,j\in J,\,m\in\N_{0},\,\beta\in M_{m}:\;\sup_{x\in K}\nu_{j,m}(\beta,x)<\infty.
\]

\begin{exa}\label{ex:weighted_smooth_functions_infinity}
Let $k\in\N_{\infty}$, $\Omega\subset\R^{d}$ be open, 
$\omega_{m}:=M_{m}\times\Omega$ with $M_{m}:=\{\beta\in\N_{0}^{d}\;|\;|\beta|\leq \min(m,k)\}$ for all $m\in\N_{0}$
and $\mathcal{V}^{k}:=(\nu_{j,m})_{j\in J, m\in\N_{0}}$ a directed family of weights on $(\omega_{m})_{m\in\N_{0}}$ 
which is locally bounded on $\Omega$. We define the topological subspace of 
$\mathcal{CV}^{k}(\Omega,E)$ from \prettyref{ex:weighted_smooth_functions} a)(i) 
consisting of the functions that vanish with all their derivatives when weighted at infinity by 
 \begin{align*}
  \mathcal{CV}^{k}_{0}(\Omega,E):=\{f\in\mathcal{CV}^{k}(\Omega,E)\;|\;&\forall\;j\in J,\,
  m\in\N_{0},\,\alpha\in\mathfrak{A},\,\varepsilon>0\\
  &\exists\;K\subset \Omega\;\text{compact}:\;|f|_{\Omega\setminus K,j,m,\alpha}<\varepsilon\} 
 \end{align*}
where 
 \[
  |f|_{\Omega\setminus K,j,m,\alpha}:=\sup_{\substack{x\in\Omega\setminus K\\ \beta\in M_{m}}}
  p_{\alpha}\bigl((\partial^{\beta})^{E}f(x)\bigr)\nu_{j,m}(\beta,x).
 \]
Then $\mathcal{CV}^{k}_{0}(\Omega,E)$ fulfils condition d) of \prettyref{lem:FVE_rel_comp} 
with $X:=\Omega$, $\mathfrak{K}:=\{K\subset\Omega\;|\;K\;\text{compact}\}$ and 
$\pi\colon\bigcup_{m\in\N_{0}}\omega_{m}\to X$, $\pi(\beta,x):=x$.
\end{exa}
\begin{proof}
We recall the definitions from \prettyref{ex:weighted_smooth_functions} a)(i). We have 
$\dom T^{E}_{m}:=\mathcal{C}^{k}(\Omega,E)$ and 
\[
 T^{E}_{m}\colon\mathcal{C}^{k}(\Omega,E)\to E^{\omega_{m}},\; f\longmapsto [(\beta,x)\mapsto (\partial^{\beta})^{E}f(x)], 
\]
for $m\in\N_{0}$. Let $f\in\mathcal{CV}^{k}_{0}(\Omega,E)$, $K\in\mathfrak{K}$, $j\in J$ and $m\in\N_{0}$.
Then we have 
\[
|f|_{\Omega\setminus K,j,m,\alpha}=\sup_{\substack{x\in\omega_{m}\\ \pi(x)\notin K}}
  p_{\alpha}\bigl(T^{E}_{m}(f)(x)\bigr)\nu_{j,m}(x)
\]
implying that \eqref{van.a.inf} is satisfied. 
Writing 
\[
N_{\pi\subset K,j,m}(f)=\bigcup_{\beta\in M_{m}}(\partial^{\beta})^{E}f(K)\nu_{j,m}(\beta,K),
\]
we see that we only have to prove that the sets $(\partial^{\beta})^{E}f(K)\nu_{j,m}(\beta,K)$ are 
precompact since $N_{\pi\subset K,j,m}(f)$ is a finite union of these sets. But this is a consequence of 
the proof of \cite[\S1, 16.\ Lemma, p.\ 15]{B3} using the continuity of $(\partial^{\beta})^{E}f$ 
and the boundedness of $\nu_{j,m}(\beta,K)$.
\end{proof}

Concrete examples of spaces $\mathcal{CV}^{k}_{0}(\Omega,E)$ are $\mathcal{CW}^{k}(\Omega,E)$ and $\mathcal{S}(\R^{d},E)$ 
(see \prettyref{ex:Schwartz}).
Let us turn to sufficient conditions for $\FVE\cong \FV\varepsilon E$. 
For the lcHs $E$ we denote by $\mathcal{J}\colon E\to E'^{\star}$ the canonical injection.

\begin{cond}\label{cond:surjectivity_linearisation}
Let $(T^{E}_{m},T^{\K}_{m})_{m\in M}$ be a strong generator for $(\mathcal{FV},E)$. Define the following conditions:
\begin{enumerate}
\item [a)] $E$ is complete.
\item [b)] $E$ is quasi-complete and for every $f\in \mathcal{FV}\left(\Omega,E\right)$ and 
$f'\in\FV'$ there is a bounded net $(f'_{\tau})_{\tau\in\mathcal{T}}$ in $\FV'$ 
converging to $f'$ in $\FV_{\kappa}'$ such that $R_{f}^{t}(f'_{\tau})\in \mathcal{J}(E)$ 
for every $\tau\in\mathcal{T}$.
\item [c)] $E$ is sequentially complete and for every $f\in \mathcal{FV}\left(\Omega,E\right)$ and 
$f'\in\FV'$ there is a sequence 
$(f'_{n})_{n\in\N}$ in $\FV'$ converging to $f'$ in $\FV_{\kappa}'$ such that 
$R_{f}^{t}(f'_{n})\in \mathcal{J}(E)$ for every $n\in\N$.
\item [d)] 
\[
\forall\;f\in\FVE,\,j\in J,\,m\in M\;\exists\; K\in\tau(E):\;N_{j,m}(f)\subset K.
\]
\end{enumerate}
\end{cond}

\begin{thm}\label{thm:full_linearisation}
Let $(T^{E}_{m},T^{\K}_{m})_{m\in M}$ be a consistent generator for $(\mathcal{FV},E)$ and let
$\FVE\subset\FVE_{\kappa}$. If one of the Conditions \ref{cond:surjectivity_linearisation} is fulfilled, 
then $\FVE\cong\FV\varepsilon E$ via $S$. 
The inverse of $S$ is given by the map 
\[
R^{t}\colon \FVE \to \FV\varepsilon E,\;f\mapsto \mathcal{J}^{-1}\circ R_{f}^{t}.
\]
\end{thm}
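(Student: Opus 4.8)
The plan is to build on \prettyref{thm:linearisation}: since the generator is consistent, $S$ is already a topological isomorphism into, so the entire task reduces to showing that $S$ is onto and that $R^{t}$ is its inverse. Fix $f\in\FVE$. Because $\FVE\subset\FVE_{\kappa}$, the absolutely convex set $R_{f}(B_{\alpha}^{\circ})$ is relatively compact in $\FV$ for every $\alpha\in\mathfrak{A}$, hence its closure $\overline{R_{f}(B_{\alpha}^{\circ})}$ is absolutely convex and compact. First I would record the seminorm identity $p_{\alpha}(\mathcal{J}^{-1}(R_{f}^{t}(f')))=\sup_{e'\in B_{\alpha}^{\circ}}|R_{f}^{t}(f')(e')|=\sup_{g\in\overline{R_{f}(B_{\alpha}^{\circ})}}|f'(g)|$, valid once $R_{f}^{t}(f')\in\mathcal{J}(E)$; its right-hand side is exactly a defining seminorm of $\kappa(\FV',\FV)$. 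Thus, granting $R_{f}^{t}(\FV')\subset\mathcal{J}(E)$, the map $u:=\mathcal{J}^{-1}\circ R_{f}^{t}$ is automatically continuous from $\FV'_{\kappa}$ to $E$, i.e.\ $u\in\FV\varepsilon E$. A direct computation gives $R_{f}^{t}(\delta_{x})=\mathcal{J}(f(x))$, whence $S(u)(x)=u(\delta_{x})=f(x)$; so $S\circ R^{t}=\operatorname{id}$ and, combined with the injectivity from \prettyref{thm:linearisation}, $S$ is a topological isomorphism with inverse $R^{t}$.

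So the heart of the matter is the single claim $R_{f}^{t}(f')\in\mathcal{J}(E)$ for every $f'\in\FV'$, equivalently that the functional $e'\mapsto f'(R_{f}(e'))$ on $E'$ is $\sigma(E',E)$-continuous. This is precisely where the four Conditions \ref{cond:surjectivity_linearisation} enter, each supplying the completeness or compactness needed to realise $R_{f}^{t}(f')$ as an element of $E$.

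For b) and c) I would use the supplied net (resp.\ sequence) $(f'_{\tau})$ with $f'_{\tau}\to f'$ in $\FV'_{\kappa}$ and $R_{f}^{t}(f'_{\tau})=\mathcal{J}(y_{\tau})$, $y_{\tau}\in E$. The seminorm identity gives $p_{\alpha}(y_{\tau}-y_{\tau'})=\sup_{g\in\overline{R_{f}(B_{\alpha}^{\circ})}}|(f'_{\tau}-f'_{\tau'})(g)|\to 0$, since $\overline{R_{f}(B_{\alpha}^{\circ})}$ is absolutely convex compact, so $(y_{\tau})$ is Cauchy in $E$. For c) it is a Cauchy sequence and converges by sequential completeness; for b) one checks from boundedness of $(f'_{\tau})$ that $(y_{\tau})$ is bounded and hence converges by quasi-completeness. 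Its limit $y$ satisfies $e'(y)=\lim f'_{\tau}(R_{f}(e'))=f'(R_{f}(e'))$, i.e.\ $R_{f}^{t}(f')=\mathcal{J}(y)$. For a) no net is given, so I would instead invoke Grothendieck's completeness theorem and verify $\sigma(E',E)$-continuity of $R_{f}^{t}(f')$ on each equicontinuous $B_{\alpha}^{\circ}$: if $e'_{\delta}\to e'$ in $\sigma(E',E)$ within $B_{\alpha}^{\circ}$, then $R_{f}(e'_{\delta})$ lies in the relatively compact set $R_{f}(B_{\alpha}^{\circ})$, and testing any cluster point $g$ against $\delta_{x}\in\FV'$ yields $g(x)=e'(f(x))=R_{f}(e')(x)$, so $R_{f}(e'_{\delta})\to R_{f}(e')$ in $\FV$ and $f'(R_{f}(e'_{\delta}))\to f'(R_{f}(e'))$.

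For d) there is no completeness hypothesis, so the argument is purely one of weak compactness. Choosing $j,m,C$ with $|f'(g)|\leq C|g|_{j,m}$ and using \eqref{T.3.2}, one gets $|R_{f}^{t}(f')(e')|\leq C\sup_{x\in N_{j,m}(f)}|e'(x)|\leq C\sup_{x\in K}|e'(x)|$ for the absolutely convex $\sigma(E,E')$-compact $K\in\tau(E)$ with $N_{j,m}(f)\subset K$. Hence $R_{f}^{t}(f')$ lies in $C$ times the polar of $K^{\circ}$ formed in $E'^{\star}$; since $\mathcal{J}$ is $\sigma(E,E')$-to-$\sigma(E'^{\star},E')$ continuous, $\mathcal{J}(K)$ is $\sigma(E'^{\star},E')$-compact, hence closed, and the bipolar theorem then forces this polar to be $C\,\mathcal{J}(K)\subset\mathcal{J}(E)$. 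I expect the main obstacle to be exactly this identification of $R_{f}^{t}(f')$ as a genuine element of $E$ carried out uniformly across the four regimes: the topological bookkeeping differs in each case (Grothendieck's theorem for a), Cauchyness plus the appropriate completeness for b) and c), weak-compact closedness for d)), and the recurring delicate point is that all relevant limits must be controlled in $\kappa(\FV',\FV)$ through the compact sets $\overline{R_{f}(B_{\alpha}^{\circ})}$ furnished by $\FVE\subset\FVE_{\kappa}$.
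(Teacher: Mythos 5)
Your proposal is correct, and for the reduction to surjectivity via \prettyref{thm:linearisation}, the seminorm identity, the verification $R_{f}^{t}(\delta_{x})=\mathcal{J}(f(x))$, and Conditions \ref{cond:surjectivity_linearisation} b), c), d) it matches the paper's own proof almost step for step; in d) the paper simply cites the Mackey--Arens theorem in the form $(E_{\tau}')'=\mathcal{J}(E)$, where you unwind that citation into an explicit bipolar argument with $\mathcal{J}(K)$ weak-$\star$ compact and closed in $E'^{\star}$ --- an equivalent, slightly more self-contained step. The genuine divergence is Condition a): the paper observes that the span of $\{\delta_{x}\;|\;x\in\Omega\}$ is dense in $\FV_{\kappa}'$ by the bipolar theorem, picks a net $f_{\tau}'=\sum_{k}a_{k,\tau}\delta_{x_{k,\tau}}$ converging to $f'$, notes $R_{f}^{t}(f_{\tau}')=\mathcal{J}\bigl(\sum_{k}a_{k,\tau}f(x_{k,\tau})\bigr)\in\mathcal{J}(E)$, and uses the estimate \eqref{wT.1.2} to see that this is a Cauchy net in the complete space $\mathcal{J}(E)$ --- so case a) is handled by exactly the same approximation scheme as b) and c), with the approximating net constructed rather than assumed. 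You instead invoke Grothendieck's completeness theorem and check $\sigma(E',E)$-continuity of $R_{f}^{t}(f')$ on each $B_{\alpha}^{\circ}$ by a cluster-point argument inside the compact set $\overline{R_{f}(B_{\alpha}^{\circ})}$; this is sound (polars are weak-$\star$ closed, the cluster point is unique because point evaluations separate $\FV$, and every equicontinuous set is absorbed by some $B_{\alpha}^{\circ}$), and it buys a proof requiring no approximating net at all, at the price of a heavier external theorem. The paper's route is more elementary and makes the common mechanism behind a)--c) transparent, while yours localises the completeness hypothesis to precisely where it is needed, namely Grothendieck's criterion on equicontinuous sets.
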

\begin{proof}
Due to \prettyref{thm:linearisation} we only have to show that $S$ is surjective. 
We equip $\mathcal{J}(E)$ with the system of seminorms given by 
\begin{equation}\label{wT.1.1}
p_{B^{\circ}_{\alpha}}(\mathcal{J}(x)):=\sup_{e'\in B^{\circ}_{\alpha}}|\mathcal{J}(x)(e')|=p_{\alpha}(x),\quad x\in E,
\end{equation}
for every $\alpha\in \mathfrak{A}$.
Let $f\in \FVE$. We consider the dual map $R_{f}^{t}$ and claim that 
$R_{f}^{t}\in L(\FV_{\kappa}',\mathcal{J}(E))$. 
Indeed, we have
\begin{equation}\label{wT.1.2}
p_{B^{\circ}_{\alpha}}\bigl(R_{f}^{t}(y)\bigr)
=\sup_{e'\in B^{\circ}_{\alpha}}\bigl|y\bigl(R_{f}(e')\bigr)\bigr|
=\sup_{x\in R_{f}(B^{\circ}_{\alpha})}|y(x)|
\leq\sup_{x\in K_{\alpha}}|y(x)|
\end{equation}
where $K_{\alpha}:=\overline{R_{f}(B^{\circ}_{\alpha})}$. 
Since $\FVE\subset\FVE_{\kappa}$, the set $R_{f}(B^{\circ}_{\alpha})$ is absolutely convex 
and relatively compact implying that $K_{\alpha}$ is absolutely convex and compact in 
$\FV$ by \cite[6.2.1 Proposition, p.\ 103]{Jarchow}. 
Further, we have for all $e'\in E'$ and $x\in\Omega$
\[
R_{f}^{t}(\delta_{x})(e')=\delta_{x}(e'\circ f)=e'\bigl(f(x)\bigr)=\mathcal{J}\bigl(f(x)\bigr)(e')
\]
and thus $R_{f}^{t}(\delta_{x})\in\mathcal{J}(E)$.
\begin{enumerate}
 \item [a)] Let $E$ be complete and $f'\in \FV'$. 
Since the span of $\{\delta_{x}\;|\; x\in \Omega\}$ is dense in
$\mathcal{F}(\Omega)_{\kappa}'$ by the bipolar theorem, there is a net $(f_{\tau}')_{\tau}$ of the form
$f_{\tau}'=\sum_{k=1}^{n_{\tau}}a_{k,\tau}\delta_{x_{k,\tau}}$ converging to 
$f'$ in $\FV_{\kappa}'$. As 
\[
 R_{f}^{t}(f_{\tau}')=\mathcal{J}\bigl(\sum_{k=1}^{n_{\tau}}a_{k,\tau}f(x_{k,\tau})\bigr)\in\mathcal{J}(E) 
\]
and 
\begin{equation}\label{wT.1.3} 
p_{B^{\circ}_{\alpha}}\bigl(R_{f}^{t}(f_{\tau}')-R_{f}^{t}(f')\bigr)
 \underset{\eqref{wT.1.2}}{\leq} \sup_{x\in K_{\alpha}}|(f_{\tau}'-f')(x)|\to 0,
\end{equation}
for all $\alpha\in \mathfrak{A}$, we gain that $(R_{f}^{t}(f_{\tau}'))_{\tau}$ is a Cauchy net 
in the complete space $\mathcal{J}(E)$.
Hence it has a limit $g\in\mathcal{J}(E)$ which coincides with $R_{f}^{t}(f')$ since
\begin{align*}
\qquad p_{B^{\circ}_{\alpha}}\bigl(g-R_{f}^{t}(f')\bigr)&\leq p_{B^{\circ}_{\alpha}}\bigl(g-R_{f}^{t}(f_{\tau}')\bigr)
 +p_{B^{\circ}_{\alpha}}\bigl(R_{f}^{t}(f_{\tau}')-R_{f}^{t}(f')\bigr)\\
 &\;\;\mathclap{\underset{\eqref{wT.1.3}}{\leq}}\;\;\; p_{B^{\circ}_{\alpha}}\bigl(g-R_{f}^{t}(f_{\tau}')\bigr)
 + \sup_{x\in K_{\alpha}}\bigl|(f_{\tau}'-f')(x)\bigr|\to 0.
\end{align*}
We conclude that $R_{f}^{t}(f')\in\mathcal{J}(E)$ for every $f'\in \FV'$. 
\item [b)] Let \prettyref{cond:surjectivity_linearisation} b) hold and $f'\in\FV'$. Then there is a bounded net 
$(f'_{\tau})_{\tau\in\mathcal{T}}$ in $\FV'$ converging to $f'$ in $\FV_{\kappa}'$ 
such that $R_{f}^{t}(f'_{\tau})\in \mathcal{J}(E)$ for every 
$\tau\in\mathcal{T}$. Due to \eqref{wT.1.2} we obtain that $(R_{f}^{t}(f_{\tau}'))_{\tau}$ 
is a bounded Cauchy net in the quasi-complete space $\mathcal{J}(E)$ 
converging to $R_{f}^{t}(f')\in\mathcal{J}(E)$.
\item [c)] Let \prettyref{cond:surjectivity_linearisation} c) hold and $f'\in\FV'$. Then there is a sequence 
$(f'_{n})_{n\in\N}$ in $\FV'$ converging to $f'$ in $\FV_{\kappa}'$ such that 
$R_{f}^{t}(f'_{n})\in \mathcal{J}(E)$ for every $n\in\N$. Again \eqref{wT.1.2} implies that 
$(R_{f}^{t}(f_{n}'))_{n}$ is a Cauchy sequence in the sequentially complete space $\mathcal{J}(E)$ 
which converges to $R_{f}^{t}(f')\in\mathcal{J}(E)$.
\item [d)] Let \prettyref{cond:surjectivity_linearisation} d) be fulfilled. Let $f\in \FVE$ and $e'\in E'$. 
For every $f'\in \FV'$ there are $j\in J$, $m\in M$ and $C>0$ such that
\[
|R_{f}^{t}(f')(e')|\leq C |R_{f}(e')|_{j,m}
\underset{\eqref{T.3.2}}{=}C\sup_{x\in N_{j,m}(f)}|e'(x)|
\]
because $(T^{E}_{m},T^{\K}_{m})_{m\in M}$ is a strong generator.
Since there is $K\in\tau(E)$ such that $N_{j,m}(f)\subset K$, we have
\[
|R_{f}^{t}(f')(e')|\leq C \sup_{x\in K}|e'(x)|
\]
implying $R_{f}^{t}(f')\in (E'_{\tau})'=\mathcal{J}(E)$ by the Mackey-Arens theorem.
\end{enumerate}
Therefore we obtain that $R_{f}^{t}\in L(\FV_{\kappa}',\mathcal{J}(E))$.
So we get for all $\alpha\in \mathfrak{A}$ and $y\in \mathcal{F}(\Omega)'$ 
\[
p_{\alpha}\bigl((\mathcal{J}^{-1}\circ R_{f}^{t})(y)\bigr)
\underset{\eqref{wT.1.1}}{=}p_{B^{\circ}_{\alpha}}\bigl(\mathcal{J}((\mathcal{J}^{-1}\circ R_{f}')(y))\bigr)
= p_{B^{\circ}_{\alpha}}\bigl(R_{f}^{t}(y)\bigr)
\underset{\eqref{wT.1.2}}{\leq}\sup_{x\in K_{\alpha}}|y(x)|.
\]
This implies $\mathcal{J}^{-1}\circ R_{f}^{t}\in L(\FV_{\kappa}', E)=\FV\varepsilon E$ (as vector spaces)
and we gain
\[
S(\mathcal{J}^{-1}\circ R_{f}^{t})(x)=\mathcal{J}^{-1}\bigl(R_{f}^{t}(\delta_{x})\bigr)
=\mathcal{J}^{-1}\bigl(\mathcal{J}(f(x))\bigr)=f(x)
\]
for every $x\in \Omega$. Thus $S(\mathcal{J}^{-1}\circ R_{f}^{t})=f$ proving the surjectivity of $S$.
\end{proof}

In particular, we get the following corollaries as special cases of \prettyref{thm:full_linearisation}.
\begin{cor}\label{cor:full_linearisation_FV_semi-M}
Let $\FV$ be semi-Montel, $E$ complete and $(T^{E}_{m},T^{\K}_{m})_{m\in M}$ 
a strong, consistent generator for $(\mathcal{FV},E)$. 
Then $\FVE\cong \FV\varepsilon E$.  
\end{cor}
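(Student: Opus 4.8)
The plan is to verify that Corollary~\ref{cor:full_linearisation_FV_semi-M} is an immediate special case of \prettyref{thm:full_linearisation}, so the task reduces to checking that the three hypotheses of that theorem are met under the assumptions that $\FV$ is semi-Montel, $E$ is complete, and $(T^{E}_{m},T^{\K}_{m})_{m\in M}$ is a strong, consistent generator for $(\mathcal{FV},E)$. The theorem requires: (1) a consistent generator, (2) the inclusion $\FVE\subset\FVE_{\kappa}$, and (3) that one of the Conditions~\ref{cond:surjectivity_linearisation} holds. Hypothesis (1) is granted directly, since the generator is assumed consistent.

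Next I would establish hypothesis (2), namely $\FVE\subset\FVE_{\kappa}$. This is exactly the content of \prettyref{lem:FVE_rel_comp}~a): that lemma assumes $(T^{E}_{m},T^{\K}_{m})_{m\in M}$ is a strong generator and that $\FV$ is a semi-Montel space, and concludes $\FVE\subset\FVE_{\kappa}$. Both assumptions are part of the corollary's hypotheses (strongness is assumed, and $\FV$ is semi-Montel), so the inclusion follows with no additional work. Finally, for hypothesis (3) I would invoke \prettyref{cond:surjectivity_linearisation}~a), which requires only that $E$ be complete; this too is assumed in the corollary.

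Having checked all three conditions, I would conclude by applying \prettyref{thm:full_linearisation} directly to obtain $\FVE\cong\FV\varepsilon E$ via $S$. Because every hypothesis is verified by quoting an earlier statement verbatim, there is essentially no obstacle here: the only point demanding any care is matching the vocabulary (\emph{strong} generator for the lemma, \emph{consistent} generator for the theorem, \emph{complete} $E$ for the condition) to the right antecedent result, and confirming that the corollary's hypotheses are strong enough to trigger both \prettyref{lem:FVE_rel_comp}~a) and \prettyref{cond:surjectivity_linearisation}~a) simultaneously. A complete proof can therefore be written in two or three sentences.

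\begin{proof}
Since $(T^{E}_{m},T^{\K}_{m})_{m\in M}$ is a strong generator for $(\mathcal{FV},E)$ and $\FV$ is semi-Montel, \prettyref{lem:FVE_rel_comp}~a) yields $\FVE\subset\FVE_{\kappa}$. Moreover, the generator is consistent and $E$ is complete, so \prettyref{cond:surjectivity_linearisation}~a) is fulfilled. Applying \prettyref{thm:full_linearisation} we obtain $\FVE\cong\FV\varepsilon E$ via $S$.
\end{proof}
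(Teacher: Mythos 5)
Your proposal is correct and follows exactly the paper's own proof: the paper likewise cites \prettyref{lem:FVE_rel_comp}~a) for the inclusion $\FVE\subset\FVE_{\kappa}$ and then applies \prettyref{thm:full_linearisation} with \prettyref{cond:surjectivity_linearisation}~a). Your verification of the hypotheses is accurate, so nothing is missing.
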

\begin{proof}
Follows from \prettyref{lem:FVE_rel_comp} a) and \prettyref{thm:full_linearisation} 
with \prettyref{cond:surjectivity_linearisation} a). 
\end{proof}

\begin{cor}\label{cor:full_linearisation_E_semi-M}
Let $E$ be semi-Montel and $(T^{E}_{m},T^{\K}_{m})_{m\in M}$ a strong, consistent generator 
for $(\mathcal{FV},E)$. Then $\FVE\cong \FV\varepsilon E$.
\end{cor}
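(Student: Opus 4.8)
The plan is to reduce this corollary to \prettyref{thm:full_linearisation} exactly as was done for \prettyref{cor:full_linearisation_FV_semi-M}, but using the branch of our machinery tailored to $E$ rather than to $\FV$. Concretely, I would verify the two hypotheses of \prettyref{thm:full_linearisation}: that $\FVE\subset\FVE_{\kappa}$, and that one of the Conditions \ref{cond:surjectivity_linearisation} holds. Consistency of the generator is already assumed, so nothing is needed there.

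First I would obtain the inclusion $\FVE\subset\FVE_{\kappa}$ directly from \prettyref{lem:FVE_rel_comp} c): the generator is strong by assumption and $E$ is semi-Montel, which is precisely the hypothesis of that part of the lemma. Second, I would check \prettyref{cond:surjectivity_linearisation} d), i.e.\ that for every $f\in\FVE$, $j\in J$ and $m\in M$ there is $K\in\tau(E)$ with $N_{j,m}(f)\subset K$. By \eqref{T.3.1} the set $N_{j,m}(f)$ is bounded in $E$, and since $E$ is semi-Montel every bounded set is relatively compact, so $N_{j,m}(f)$ is relatively compact. The natural candidate is then $K:=\oacx\bigl(N_{j,m}(f)\bigr)$, which is absolutely convex and satisfies $N_{j,m}(f)\subset K$ by construction.

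The one point requiring care—and the main obstacle—is showing that this $K$ indeed belongs to $\tau(E)$, that is, that $K$ is $\sigma(E,E')$-compact. For this I would invoke the chain of implications recorded in the preliminaries: $E$ semi-Montel implies $E$ semi-reflexive, hence $E$ is quasi-complete, hence $E$ has the convex compactness property. The ccp guarantees that the closure of the absolutely convex hull of the (relatively) compact set $N_{j,m}(f)$ is compact in the original topology of $E$, so $K$ is compact; and every compact set is a fortiori $\sigma(E,E')$-compact since the weak topology is coarser than the given one. Thus $K\in\tau(E)$ and \prettyref{cond:surjectivity_linearisation} d) is satisfied.

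With both hypotheses verified, \prettyref{thm:full_linearisation} applied via \prettyref{cond:surjectivity_linearisation} d) yields that $S\colon\FV\varepsilon E\to\FVE$ is a topological isomorphism, i.e.\ $\FVE\cong\FV\varepsilon E$, with inverse $R^{t}$. I expect the argument to be short, with essentially all of its content concentrated in the passage from ``bounded'' to ``weakly compact closed absolutely convex hull'', which is exactly where the semi-Montel hypothesis (through quasi-completeness and the ccp) is consumed.
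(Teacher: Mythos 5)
Your proposal is correct and follows essentially the same route as the paper's proof: both establish $\FVE\subset\FVE_{\kappa}$ via \prettyref{lem:FVE_rel_comp} c) and verify \prettyref{cond:surjectivity_linearisation} d) with $K=\oacx\bigl(N_{j,m}(f)\bigr)$, then invoke \prettyref{thm:full_linearisation}. The only difference is cosmetic: the paper gets compactness of $K$ directly from the semi-Montel property (a closed, bounded, absolutely convex set is compact, citing Jarchow), whereas you detour through semi-Montel $\Rightarrow$ semi-reflexive $\Rightarrow$ quasi-complete $\Rightarrow$ ccp applied to $\overline{N_{j,m}(f)}$; both justifications are valid.
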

\begin{proof}
We observe that $\oacx(N_{j,m}(f))$ 
is absolutely convex and compact in the semi-Montel space $E$ by \cite[6.2.1 Proposition, p.\ 103]{Jarchow} 
and \cite[6.7.1 Proposition, p.\ 112]{Jarchow} for every $f\in\FVE$, $j\in J$ and $m\in M$. 
Our statement follows from \prettyref{lem:FVE_rel_comp} c) and \prettyref{thm:full_linearisation} with \prettyref{cond:surjectivity_linearisation} d).
\end{proof}

\begin{cor}\label{cor:full_linearisation_E_quasi_compl}
Let $E$ be quasi-complete, $(T^{E}_{m},T^{\K}_{m})_{m\in M}$ a strong, consistent generator for 
$(\mathcal{FV},E)$ and condition d) of \prettyref{lem:FVE_rel_comp} be fulfilled. 
Then $\FVE\cong\FV\varepsilon E$. 
\end{cor}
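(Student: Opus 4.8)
The plan is to deduce the statement directly from our main \prettyref{thm:full_linearisation}. Since a strong, consistent generator for $(\mathcal{FV},E)$ is already assumed, the only hypotheses of \prettyref{thm:full_linearisation} left to verify are the inclusion $\FVE\subset\FVE_{\kappa}$ and that one of the Conditions \ref{cond:surjectivity_linearisation} is fulfilled. The first of these is immediate: as $(T^{E}_{m},T^{\K}_{m})_{m\in M}$ is a strong generator and condition d) of \prettyref{lem:FVE_rel_comp} is part of the hypotheses, \prettyref{lem:FVE_rel_comp} d) yields $\FVE\subset\FVE_{\kappa}$ at once. It therefore remains only to produce a suitable surjectivity condition, and the natural candidate is \prettyref{cond:surjectivity_linearisation} d), which merely requires that each set $N_{j,m}(f)$ be contained in some $K\in\tau(E)$, i.e.\ in an absolutely convex, $\sigma(E,E')$-compact subset of $E$.

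First I would extract precompactness of the relevant test sets. By \prettyref{rem:condition_d_precomp}, condition d) of \prettyref{lem:FVE_rel_comp} guarantees that $N_{j,m}(f)$ is precompact in $E$ for every $f\in\FVE$, $j\in J$ and $m\in M$. Because $E$ is quasi-complete, the closure $\overline{N_{j,m}(f)}$ of this precompact set is compact, and since every quasi-complete lcHs has the convex compactness property (as recorded in the preliminaries), the set $K:=\oacx(\overline{N_{j,m}(f)})$ is absolutely convex and compact in $E$. A compact set is weakly compact, so $K$ is absolutely convex and $\sigma(E,E')$-compact, hence $K\in\tau(E)$, while trivially $N_{j,m}(f)\subset\overline{N_{j,m}(f)}\subset K$. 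This verifies \prettyref{cond:surjectivity_linearisation} d).

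With $\FVE\subset\FVE_{\kappa}$ and \prettyref{cond:surjectivity_linearisation} d) established, \prettyref{thm:full_linearisation} applies and delivers $\FVE\cong\FV\varepsilon E$ via $S$, with inverse $R^{t}\colon f\mapsto\mathcal{J}^{-1}\circ R_{f}^{t}$, completing the argument. The only slightly delicate point is the passage from precompactness to an absolutely convex, weakly compact bound: this is exactly where quasi-completeness is needed, both to promote the precompact set $N_{j,m}(f)$ to a relatively compact one and, through the convex compactness property, to retain compactness after forming the closed absolutely convex hull. Everything else is a direct invocation of \prettyref{lem:FVE_rel_comp}, \prettyref{rem:condition_d_precomp} and \prettyref{thm:full_linearisation}, so I would keep the written proof to a few lines that name these three results.
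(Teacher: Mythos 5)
Your proposal is correct and follows essentially the same route as the paper's own proof: invoke Remark \ref{rem:condition_d_precomp} to get precompactness of $N_{j,m}(f)$, use quasi-completeness to pass to relative compactness and then the convex compactness property to make $\oacx(\overline{N_{j,m}(f)})$ absolutely convex and compact, and conclude via Lemma \ref{lem:FVE_rel_comp} d) and Theorem \ref{thm:full_linearisation} with Condition \ref{cond:surjectivity_linearisation} d). The only difference is that you spell out explicitly that compactness implies $\sigma(E,E')$-compactness so that $K\in\tau(E)$, a step the paper leaves implicit.
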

\begin{proof}
Let $f\in\FVE$. It follows from \prettyref{rem:condition_d_precomp} that $N_{j,m}(f)$ is precompact in $E$ 
for every $j\in J$ and $m\in M$. Since $E$ is quasi-complete, 
$N_{j,m}(f)$ is relatively compact as well by \cite[3.5.3 Proposition, p.\ 65]{Jarchow}. This implies 
that $K:=\oacx(\overline{N_{j,m}(f)})$ is absolutely convex and compact
because the quasi-complete space $E$ has ccp. 
Our statement follows from \prettyref{lem:FVE_rel_comp} d) and \prettyref{thm:full_linearisation} 
with \prettyref{cond:surjectivity_linearisation} d).
\end{proof}

Let us apply our preceding results to our spaces of weighted $k$-times continuously partially differentiable functions
on an open set $\Omega\subset\R^{d}$ with $k\in\N_{\infty}$.
In order to obtain consistency for its generator 
we have to restrict to directed families of weights  
$\mathcal{V}^{k}:=(\nu_{j,m})_{j\in J,m\in\N_{0}}$ on $(\omega_{m})_{m\in\N_{0}}$ from 
\prettyref{ex:weighted_smooth_functions} a)(i) or (ii)
which are locally bounded away from zero on $\Omega$, i.e.\ 
\[
 \forall\;K\subset\Omega\;\text{compact},\,m\in\N_{0}\;\exists\; 
 j\in J\;\forall\;\beta\in\N_{0}^{d},\,|\beta|\leq\min(m,k):\;\inf_{x\in K}\nu_{j,m}(\beta,x)>0.
\]
This condition on $\mathcal{V}^{k}$ guarantees that the map $I\colon\mathcal{CV}^{k}(\Omega)\to\mathcal{CW}^{k}(\Omega)$, 
$f\mapsto f$, is continuous which is needed for consistency. 
However, we postpone the check for consistency to the next section.

\begin{exa}\label{ex:weighted_diff}
Let $E$ be an lcHs, $k\in\N_{\infty}$, $\mathcal{V}^{k}$ be a directed family of weights 
which is locally bounded away from zero on an open set $\Omega\subset\R^{d}$.
\begin{enumerate}
\item [a)] $\mathcal{CV}^{k}(\Omega,E)\cong \mathcal{CV}^{k}(\Omega)\varepsilon E$ if $E$ is a semi-Montel space 
and $\mathcal{CV}^{k}(\Omega)$ barrelled.
\item [b)] $\mathcal{CV}^{k}(\Omega,E)\cong \mathcal{CV}^{k}(\Omega)\varepsilon E$ if $E$ is complete and 
$\mathcal{CV}^{k}(\Omega)$ a Montel space.
\end{enumerate} 
\end{exa} 
\begin{proof}
We recall the definitions from \prettyref{ex:weighted_smooth_functions} a). 
We have $\omega_{m}:=M_{m}\times\Omega$ with $M_{m}:=\{\beta\in\N_{0}^{d}\;|\;|\beta|\leq\min(m,k)\}$ for all $m\in\N_{0}$ 
or $\omega_{m}:=\N_{0}^{d}\times\Omega$ for all $m\in\N_{0}$. Further, $\operatorname{AP}(\Omega,E)=E^{\Omega}$,   
$\dom T^{E}_{m}:=\mathcal{C}^{k}(\Omega,E)$ and 
\[
 T^{E}_{m}\colon\mathcal{C}^{k}(\Omega,E)\to E^{\omega_{m}},\; f\longmapsto [(\beta,x)\mapsto (\partial^{\beta})^{E}f(x)], 
\]
for all $m\in\N_{0}$ and the same with $\K$ instead of $E$. 
The family $(T^{E}_{m},T^{\K}_{m})_{m\in\N}$ is a strong generator for $(\mathcal{CV}^{k},E)$ because 
\[
 (\partial^{\beta})^{\K}(e'\circ f)(x)=e'\bigl((\partial^{\beta})^{E}f(x)\bigr),\quad (\beta,x)\in\omega_{m}
\]
for all $e'\in E'$, $f\in\mathcal{CV}^{k}(\Omega,E)$ and $m\in\N_{0}$ due to the linearity and continuity of $e'\in E'$.
Its consistency follows by \prettyref{prop:diff_cons_barrelled} 
from the assumptions that $\mathcal{V}^{k}$ is locally bounded away from zero on $\Omega$ and $\mathcal{CV}^{k}(\Omega)$ is barrelled.
From \prettyref{cor:full_linearisation_E_semi-M} we deduce part a) and from \prettyref{cor:full_linearisation_FV_semi-M} 
part b). 
\end{proof}

The spaces $\mathcal{CV}^{k}(\Omega)$ with $\omega_{m}:=M_{m}\times\Omega$ for all $m\in\N_{0}$ are Fr\'{e}chet spaces and thus barrelled if $J$ is countable 
by \cite[3.7 Proposition p.\ 7]{kruse2018_2}. 
For the Schwartz space $\mathcal{S}(\R^{d},E)$ an improvement of b) to quasi-complete $E$ is known, see e.g.\ 
\cite[Proposition 9, p.\ 108, Th\'{e}or\`{e}me 1, p.\ 111]{Schwartz1955}, which we obtain by using \prettyref{cor:full_linearisation_E_quasi_compl}. 

\begin{exa}\label{ex:Schwartz}
If $E$ is a quasi-complete lcHs, then $\mathcal{S}(\R^{d},E)\cong\mathcal{S}(\R^{d})\varepsilon E$.
\end{exa}
\begin{proof}
First, we note that $\mathcal{S}(\R^{d})$ is a Fr\'{e}chet space and hence barrelled. 
Recalling \prettyref{ex:weighted_smooth_functions} c),
we define $\mathcal{V}^{\infty}$ as the family of weights given by
$\nu_{1,m}(\beta,x):=(1+|x|^{2})^{m/2}$, $x\in\R^{d}$, $\beta\in\N_{0}^{d}$ with $|\beta|\leq m$, 
for all $m\in\N_{0}$ yielding $\mathcal{S}(\R^{d},E)=\mathcal{CV}^{\infty}(\R^{d},E)$. 
The family $\mathcal{V}^{\infty}$ is locally bounded away from zero on $\R^{d}$ 
and thus the generator of $(\mathcal{S},E)$ is strong and consistent by the proof of 
\prettyref{ex:weighted_diff}. 
Observing that for every $m\in\N$ and $\varepsilon>0$ there is $r>0$ such 
that
\[
\frac{(1+|x|^{2})^{m/2}}{(1+|x|^{2})^{m}}=(1+|x|^{2})^{-m/2}<\varepsilon
\]
for all $x\notin\overline{\mathbb{B}_{r}(0)}$, we deduce 
\[
 |f|_{\R^{d}\setminus \overline{\mathbb{B}_{r}(0)},m,\alpha}
 =\sup_{\substack{x\in\R^{d}\setminus\overline{\mathbb{B}_{r}(0)}\\ \beta\in\N_{0}^{d},|\beta|\leq m}}
 p_{\alpha}\bigl((\partial^{\beta})^{E}f(x)\bigr)(1+|x|^{2})^{m/2}
 \leq\varepsilon |f|_{2m,\alpha}
\]
for every $f\in\mathcal{S}(\R^{d},E)$ and $\alpha\in\mathfrak{A}$ 
which proves $\mathcal{S}(\R^{d},E)=\mathcal{CV}^{\infty}_{0}(\R^{d},E)$. 
The combination of \prettyref{cor:full_linearisation_E_quasi_compl} and \prettyref{ex:weighted_smooth_functions_infinity} 
with the local boundedness of $\mathcal{V}^{\infty}$ on $\R^{d}$ implies our statement. 
\end{proof}

The same is true if $\mathcal{V}^{k}=\mathcal{W}^{k}$, i.e.\ $\mathcal{C}^{k}(\Omega,E)$ is equipped with its usual topology 
of uniform convergence of all partial derivatives up to order $k$ on compact subsets of $\Omega$. 
For $\Omega=\R^{d}$ this can also be found in \cite[Proposition 9, p.\ 108, Th\'{e}or\`{e}me 1, p.\ 111]{Schwartz1955}
and for general open $\Omega\subset\R^{d}$ it is already mentioned in \cite[(9), p.\ 236]{Kaballo} (without a proof) 
that $\mathcal{CW}^{k}(\Omega,E)\cong \mathcal{CW}^{k}(\Omega)\varepsilon E$ for $k\in\N_{\infty}$ 
and quasi-complete $E$. For $k=\infty$ we even have $\mathcal{CW}^{\infty}(\Omega,E)\cong \mathcal{CW}^{\infty}(\Omega)\varepsilon E$ 
for locally complete $E$ by \cite[p.\ 228]{B/F/J}. Our technique allows us to generalise the first result 
and to get back the second result.

\begin{exa}\label{ex:diff_usual}
Let $E$ be an lcHs, $k\in\N_{\infty}$ and $\Omega\subset\R^{d}$ be open.
Then $\mathcal{CW}^{k}(\Omega,E)\cong \mathcal{CW}^{k}(\Omega)\varepsilon E$ if $k<\infty$ and $E$ has metric ccp or if $k=\infty$ and $E$ is locally complete. 
\end{exa}
\begin{proof}
We recall from \prettyref{ex:weighted_smooth_functions} b) that $\mathcal{W}^{k}$ is the family of weights given 
by $\nu_{K,m}(\beta,x):=\chi_{K}(x)$, $(\beta,x)\in M_{m}\times\Omega$, for all $m\in\N_{0}$ 
and compact $K\subset\Omega$ where $M_{m}:=\{\beta\in\N^{d}_{0}\;|\;|\beta|\leq\min(m,k)\}$ and $\chi_{K}$ is the characteristic function of $K$. 
We already know that the generator for $(\mathcal{CW}^{k},E)$ is strong and consistent by the proof of Example \ref{ex:weighted_diff} 
because $\mathcal{W}^{k}$ is locally bounded away from zero on $\Omega$ and $\mathcal{CW}^{k}(\Omega)$ a Fr\'echet space.  
Let $f\in\mathcal{CW}^{k}(\Omega,E)$, $K\subset\Omega$ be compact, $m\in\N_{0}$ and  
set
\[
 N_{K,m}(f):=\{(\partial^{\beta})^{E}f(x)\nu_{K,m}(\beta,x)\;|\;x\in \Omega,\,\beta\in M_{m}\}=\{0\}\cup\bigcup_{\beta\in M_{m}}(\partial^{\beta})^{E}f(K). 
\]
$N_{K,m}(f)$ is compact since it is a finite union of compact sets. 
Furthermore, the compact sets $\{0\}$ and $(\partial^{\beta})^{E}f(K)$ are metrisable by 
\cite[Chap.\ IX, \S2.10, Proposition 17, p.\ 159]{bourbakiII} and thus their finite union $N_{K,m}(f)$ 
is metrisable as well by \cite[Theorem 1, p.\ 361]{stone} since the compact set $N_{K,m}(f)$ is collectionwise normal and locally 
countably compact by \cite[5.1.18 Theorem, p.\ 305]{engelking}.  
Due to \prettyref{lem:FVE_rel_comp} b) we obtain $\mathcal{CW}^{k}(\Omega,E)\subset\mathcal{CW}^{k}(\Omega,E)_{\kappa}$ for any lcHs $E$.
If $E$ has metric ccp, then the set $\oacx(N_{K,m}(f))$ is 
absolutely convex and compact. Thus \prettyref{thm:full_linearisation} with \prettyref{cond:surjectivity_linearisation} d) settles the case for $k<\infty$. 
If $k=\infty$ and $E$ is locally complete, we observe that 
$K_{\beta}:=\oacx((\partial^{\beta})^{E}f(K))$ for 
$f\in\mathcal{CW}^{\infty}(\Omega,E)$ is absolutely convex and compact 
by \cite[Proposition 2, p.\ 354]{Bonet2002}. Then we have
\[
 N_{K,m}(f)\subset \acx\bigl(\bigcup_{\beta\in M_{m}}K_{\beta}\bigr)
\]
and the set on the right-hand side is absolutely convex and compact by \cite[6.7.3 Proposition, p.\ 113]{Jarchow}. 
Again, the statement follows from \prettyref{thm:full_linearisation} with \prettyref{cond:surjectivity_linearisation} d). 
\end{proof}

In the context of differentiability on infinite dimensional spaces the preceding example remains true for an open 
subset $\Omega$ of a Fr\'{e}chet space or DFM-space and quasi-complete $E$ by \cite[3.2 Corollary, p.\ 286]{meise1977}. 
Like here this can be generalised to $E$ with [metric] ccp. 
In the two preceding examples we improved \prettyref{ex:weighted_diff} for $\omega_{m}:=M_{m}\times\Omega$, $m\in\N_{0}$. 
Now, we improve \prettyref{ex:weighted_diff} for the special case of spaces of ultradifferentiable functions 
$\mathcal{E}^{(M_{p})}(\Omega,E)$ and $\mathcal{E}^{\{M_{p}\}}(\Omega,E)$ from \prettyref{ex:weighted_smooth_functions} e) and f)
where $\omega_{m}:=\N_{0}^{d}\times\Omega$ for all $m\in\N_{0}$. 
For this purpose we recall the following conditions of Komatsu for the sequence $(M_{p})_{p\in\N_{0}}$ (see 
\cite[p.\ 26]{Kom7} and \cite[p.\ 653]{Kom9}):
\begin{enumerate}
 \item [(M.0)$\phantom{'}$] $M_{0}=M_{1}=1$,
 \item [(M.1)$\phantom{'}$] $\forall\; p\in\N:\; M_{p}^{2}\leq M_{p-1}M_{p+1}$,
 \item [(M.2)'] $\exists\; A,C>0\;\forall\;p\in\N_{0}:\;M_{p+1}\leq AC^{p+1}M_{p}$,
 \item [(M.3)'] $\sum_{p=1}^{\infty}\frac{M_{p-1}}{M_{p}}<\infty$.
\end{enumerate}

\begin{exa}
Let $E$ be an lcHs, $\Omega\subset\R^{d}$ open
and $(M_{p})_{p\in\N_{0}}$ a sequence of positive real numbers.
\begin{enumerate}
 \item [a)] $\mathcal{E}^{(M_{p})}(\Omega,E)\cong\mathcal{E}^{(M_{p})}(\Omega)\varepsilon E$ if $E$ is complete or semi-Montel.
 \item [b)] $\mathcal{E}^{\{M_{p}\}}(\Omega,E)\cong\mathcal{E}^{\{M_{p}\}}(\Omega)\varepsilon E$ if $E$ is complete or semi-Montel
 and in both cases $(M_{p})_{p\in\N_{0}}$ fulfils (M.1) and (M.3)'.
 \item [c)] $\mathcal{E}^{(M_{p})}(\Omega,E)\cong\mathcal{E}^{(M_{p})}(\Omega)\varepsilon E$ and 
 $\mathcal{E}^{\{M_{p}\}}(\Omega,E)\cong\mathcal{E}^{\{M_{p}\}}(\Omega)\varepsilon E$ if $E$ is sequentially complete 
 and  $(M_{p})_{p\in\N_{0}}$ fulfils (M.0), (M.1), (M.2)' and (M.3)'.
\end{enumerate}
\end{exa}
\begin{proof}
The generator is strong and consistent by the proof of \prettyref{ex:weighted_diff} 
since the family of weights given in \prettyref{ex:weighted_smooth_functions} e) resp.\ f) is locally bounded away from zero on $\Omega$ and 
$\mathcal{E}^{(M_{p})}(\Omega)$ is a Fr\'{e}chet-Schwartz space in a) and c) by \cite[Theorem 2.6, p.\ 44]{Kom7} whereas 
$\mathcal{E}^{\{M_{p}\}}(\Omega)$ is a Montel space in b) and c) by \cite[Theorem 5.12, p.\ 65-66]{Kom7}. 
Hence the statements a) and b) follow from \prettyref{ex:weighted_diff}. 

Let us turn to c). We note that $\mathcal{E}^{(M_{p})}(\Omega,E)\subset\mathcal{E}^{(M_{p})}(\Omega,E)_{\kappa}$ and 
$\mathcal{E}^{\{M_{p}\}}(\Omega,E)\subset\mathcal{E}^{\{M_{p}\}}(\Omega,E)_{\kappa}$ by \prettyref{lem:FVE_rel_comp} a) 
for any lcHs $E$. Further, we claim that \prettyref{cond:surjectivity_linearisation} c) is fulfilled. Let 
$f'\in\mathcal{E}^{(M_{p})}(\Omega)'$ resp.\ $\mathcal{E}^{\{M_{p}\}}(\Omega)'$. 
Due to \cite[Proposition 3.7, p.\ 677]{Kom9} there is a sequence $(f_{n})_{n\in\N}$ 
in the space $\mathcal{D}^{(M_{p})}(\Omega)$ resp.\ $\mathcal{D}^{\{M_{p}\}}(\Omega)$ of ultradifferentiable functions 
of class $(M_{p})$ of Beurling-type resp.\ $\{M_{p}\}$ of Roumieu-type with compact support which converges to 
$f'$ in $\mathcal{E}^{(M_{p})}(\Omega)_{b}'$ resp.\ $\mathcal{E}^{\{M_{p}\}}(\Omega)_{b}'$. 
Let $f\in\mathcal{E}^{(M_{p})}(\Omega,E)$ resp.\ $\mathcal{E}^{\{M_{p}\}}(\Omega,E)$. 
We observe that for every $e'\in E'$
\[
|R_{f}^{t}(f_{n})(e')|=\bigl|\int_{\Omega}f_{n}(x)e'(f(x))\d x\bigr|
\leq \lambda\bigl(\operatorname{supp}(f_{n})\bigr)\sup_{y\in N_{n}(f)}|e'(y)|
\]
where $\lambda$ is the Lebesgue measure, $\operatorname{supp}(f_{n})$ is the support of $f_{n}$ and 
$N_{n}(f):=\{f_{n}(x)f(x)\;|\;x\in\operatorname{supp}(f_{n})\}$. 
The set $N_{n}(f)$ is compact and metrisable by 
\cite[Chap.\ IX, \S2.10, Proposition 17, p.\ 159]{bourbakiII} 
and thus the closure of its absolutely convex hull is 
compact in $E$ as the sequentially complete space $E$ has metric ccp. 
We conclude that $R_{f}^{t}(f_{n})\in (E_{\kappa}')'=\mathcal{J}(E)$ for 
every $n\in\N$. Therefore \prettyref{cond:surjectivity_linearisation} c) is fulfilled implying 
statement c) for sequentially complete $E$ by \prettyref{thm:full_linearisation}. 
\end{proof}

The results a) and b) in this example are new whereas c) is already proved in
\cite[Theorem 3.10, p.\ 678]{Kom9} in a different way. We included c) to demonstrate an 
application of \prettyref{cond:surjectivity_linearisation} c). 
We close this section by phrasing some sufficient conditions in \prettyref{prop:linearisation_subspace} such that 
$\FVE\cong\FV\varepsilon E$
passes on to topological subspaces which will simplify our proofs when considering subspaces. 
 
\begin{rem}\label{rem:UR_cons_str}
\begin{enumerate}
 \item [a)] If $(T^{E}_{m},T^{\K}_{m})_{m\in M}$ is a consistent generator for $(\mathcal{FV},E)$ and
 $\mathcal{G}(\Omega)$ a locally convex Hausdorff space of functions from $\Omega$ to $\K$ such that 
 the inclusion $\mathcal{G}(\Omega)\subset\FV$ holds topologically, 
 then the conditions (i) and (ii) of the consistency-\prettyref{def:consist} are satisfied for every
 $u\in \mathcal{G}(\Omega)\varepsilon E$.
 \item [b)]  If $(T^{E}_{m},T^{\K}_{m})_{m\in M}$ is a strong generator for $(\mathcal{FV},E)$, 
 $\mathcal{G}(\Omega,E)$ is a vector space of functions from $\Omega$ to $E$ such that $\mathcal{G}(\Omega,E)\subset\FVE$ as 
 a linear subspace, then the conditions (i) and (ii) of the strength-\prettyref{def:strong} 
 are satisfied for every $f\in\mathcal{G}(\Omega,E)$. 
\end{enumerate}
\end{rem}
\begin{proof}
We start with a). Since $\FV$ is a $\dom$-space and 
$\mathcal{G}(\Omega)\subset\FV$ holds topologically, 
we obtain that $\delta_{x}\in\mathcal{G}(\Omega)'$ for every $x\in\Omega$.
Furthermore, every compact subset $K\subset\mathcal{G}(\Omega)$ 
is also compact in $\FV$ implying the continuous embedding 
$\FV'_{\kappa}\hookrightarrow\mathcal{GV}(\Omega)'_{\kappa}$. 
In addition, the restriction of every equicontinuous subset of 
$\FV'$ to $\mathcal{GV}(\Omega)$ is an equicontinuous subset 
of $\mathcal{GV}(\Omega)'$ implying the continuity of the embedding 
$\mathcal{GV}(\Omega)\varepsilon E\hookrightarrow\FV\varepsilon E$. 
Hence we observe that the restriction $u_{\mid\FV'}\in \FV\varepsilon E$ for every 
$u\in\mathcal{GV}(\Omega)\varepsilon E$ and 
\[
S(u)(x)=u(\delta_{x})=u_{\mid\FV'}(\delta_{x})
=S(u_{\mid\FV'})(x)
\]
for every $x\in\Omega$. Thus we have $S(u)=S(u_{\mid\FV'})$ and
$u_{\mid\FV'}\in \FV\varepsilon E$ for every  
$u\in\mathcal{GV}(\Omega)\varepsilon E$. 
Therefore the conditions (i) and (ii) of the consistency-Definition are satisfied for every
$u\in \mathcal{GV}(\Omega)\varepsilon E$ if
$(T^{E}_{m},T^{\K}_{m})_{m\in M}$ is a consistent generator 
for $(\mathcal{FV},E)$.
Let us turn to b). If $(T^{E}_{m},T^{\K}_{m})_{m\in M}$ is a strong generator for $(\mathcal{FV},E)$, 
then the conditions (i) and (ii) of the strength-Definition are satisfied for every $f\in\mathcal{G}(\Omega,E)$ as well
because $\mathcal{G}(\Omega,E)\subset\FVE$.
\end{proof}

\begin{prop}\label{prop:linearisation_subspace}
Let $(T^{E}_{m},T^{\K}_{m})_{m\in M}$ be a generator for 
$(\mathcal{FV},E)$ and $(\mathcal{GV},E)$.
Let one of the Conditions \ref{cond:surjectivity_linearisation} a) or d) be fulfilled for $(\mathcal{FV},E)$. Then
\[
\mathcal{GV}(\Omega,E)\cong \mathcal{GV}(\Omega)\varepsilon E 
\]
is valid if $(T^{E}_{m},T^{\K}_{m})_{m\in M}$ is a
strong, consistent generator for $(\mathcal{FV},E)$, 
$S(u)\in\operatorname{AP}_{\mathcal{GV}}(\Omega,E)$ for all $u\in\mathcal{GV}(\Omega)\varepsilon E$,
$e'\circ f\in\operatorname{AP}_{\mathcal{GV}}(\Omega)$ for all $e'\in E'$ and $f\in\mathcal{GV}(\Omega,E)$
and one of the following conditions is satisfied:
\begin{enumerate}
\item [(i)] $(\mathcal{FV},E)$ fulfils the conditions of \prettyref{lem:FVE_rel_comp} b), c) or d).
\item [(ii)] $(\mathcal{FV},E)$ fulfils the conditions of \prettyref{lem:FVE_rel_comp} a) and 
$\mathcal{GV}(\Omega)$ is closed in $\FV$. 
\end{enumerate}
\end{prop}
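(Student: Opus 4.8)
The plan is to reduce everything to \prettyref{thm:full_linearisation} applied to the pair $(\mathcal{GV},E)$, exploiting that $\mathcal{GV}(\Omega)$ is a topological subspace of $\FV$ and $\mathcal{GV}(\Omega,E)$ a linear subspace of $\FVE$. This holds because both generators are built from the same operators $(T^{E}_{m},T^{\K}_{m})_{m\in M}$ and the same family of weights, so the seminorms agree and only the $\operatorname{AP}$-spaces shrink. In particular, for $f\in\mathcal{GV}(\Omega,E)$ the set $N_{j,m}(f)$ does not depend on whether $f$ is regarded as an element of $\mathcal{GV}(\Omega,E)$ or of $\FVE$, and every functional in $\FV'$ restricts to one in $\mathcal{GV}(\Omega)'$.

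First I would promote the generator to a strong, consistent one for $(\mathcal{GV},E)$. For consistency I invoke \prettyref{rem:UR_cons_str} a) with the inclusion $\mathcal{GV}(\Omega)\subset\FV$: for every $u\in\mathcal{GV}(\Omega)\varepsilon E$ its restriction $u_{\mid\FV'}$ lies in $\FV\varepsilon E$ and satisfies $S(u)=S(u_{\mid\FV'})$, so consistency of the generator for $(\mathcal{FV},E)$ forces $S(u)\in\dom T^{E}_{m}$ and $(T^{E}_{m}S(u))(x)=u(T^{\K}_{m,x})$, which is condition (ii) of \prettyref{def:consist}; condition (i) for the $\mathcal{GV}$-space, namely $S(u)\in\operatorname{AP}_{\mathcal{GV}}(\Omega,E)$, is precisely the standing hypothesis. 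Symmetrically, \prettyref{rem:UR_cons_str} b) with $\mathcal{GV}(\Omega,E)\subset\FVE$ supplies condition (ii) of \prettyref{def:strong}, while the hypothesis $e'\circ f\in\operatorname{AP}_{\mathcal{GV}}(\Omega)$ supplies condition (i), so the generator is strong for $(\mathcal{GV},E)$.

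Next I would verify $\mathcal{GV}(\Omega,E)\subset\mathcal{GV}(\Omega,E)_{\kappa}$ via \prettyref{lem:FVE_rel_comp} applied to $(\mathcal{GV},E)$. In case (i) the assumed hypothesis b), c) or d) of \prettyref{lem:FVE_rel_comp} for $(\mathcal{FV},E)$ is either a property of $E$ alone or a statement quantified over all $f\in\FVE$ and expressed through the sets $N_{j,m}(f)$; since $\mathcal{GV}(\Omega,E)\subset\FVE$ it therefore continues to hold over the smaller range of functions $f\in\mathcal{GV}(\Omega,E)$, and the lemma applies. In case (ii) I use that a closed subspace of the semi-Montel space $\FV$ is again semi-Montel, so $\mathcal{GV}(\Omega)$ is semi-Montel and \prettyref{lem:FVE_rel_comp} a) applies to $(\mathcal{GV},E)$.

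Finally I would transfer one of the Conditions \ref{cond:surjectivity_linearisation}. Condition a) is a property of $E$ and so holds verbatim for $(\mathcal{GV},E)$; condition d), being quantified over all $f\in\FVE$ through $N_{j,m}(f)\subset K\in\tau(E)$, restricts to all $f\in\mathcal{GV}(\Omega,E)$. Having a strong, consistent generator, the inclusion into $\mathcal{GV}(\Omega,E)_{\kappa}$, and one of the Conditions \ref{cond:surjectivity_linearisation} for $(\mathcal{GV},E)$, \prettyref{thm:full_linearisation} delivers $\mathcal{GV}(\Omega,E)\cong\mathcal{GV}(\Omega)\varepsilon E$ via $S$. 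The main point to watch---and the reason only a) and d) are admitted---is that conditions b) and c) of \prettyref{cond:surjectivity_linearisation} assert the existence of approximating nets or sequences inside $\FV'$ whose images under $R_{f}^{t}$ land in $\mathcal{J}(E)$; such data need not survive the passage to the possibly smaller dual $\mathcal{GV}(\Omega)'$, whereas a) and d) only constrain $E$ and the sets $N_{j,m}(f)$ and hence descend to the subspace without change.
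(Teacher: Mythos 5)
Your proposal is correct and follows essentially the same route as the paper's own proof: promote the generator to a strong, consistent one for $(\mathcal{GV},E)$ via \prettyref{rem:UR_cons_str} together with the explicit $\operatorname{AP}_{\mathcal{GV}}$-hypotheses, obtain $\mathcal{GV}(\Omega,E)\subset\mathcal{GV}(\Omega,E)_{\kappa}$ from \prettyref{lem:FVE_rel_comp} (restricting b), c), d) to the subspace in case (i), using closedness and the semi-Montel inheritance in case (ii)), transfer Conditions \ref{cond:surjectivity_linearisation} a) and d) via $\mathcal{GV}(\Omega,E)\subset\FVE$, and conclude with \prettyref{thm:full_linearisation}. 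Your closing remark on why b) and c) are excluded is a sound observation beyond what the paper states, but the core argument is the same.
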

\begin{proof}
By \prettyref{rem:UR_cons_str} $(T^{E}_{m},T^{\K}_{m})_{m\in M}$ 
is a strong, consistent generator for $(\mathcal{GV},E)$. Further, we get $\mathcal{GV}(\Omega,E)\subset\mathcal{GV}(\Omega,E)_{\kappa}$
from \prettyref{lem:FVE_rel_comp} b), c) resp.\ d) in case (i) because $\mathcal{GV}(\Omega,E)\subset\FVE$ 
and from \prettyref{lem:FVE_rel_comp} a) in case (ii) because $\mathcal{GV}(\Omega)\subset\FV$ and
closed subspaces of semi-Montel spaces are semi-Montel again. 
If one of the Conditions \ref{cond:surjectivity_linearisation} a) or d) is fulfilled for $(\mathcal{FV},E)$, 
then it is also valid for $(\mathcal{GV},E)$
due to the inclusion $\mathcal{GV}(\Omega,E)\subset\FVE$. Hence 
\prettyref{thm:full_linearisation} yields the statement. 
\end{proof}

Let us consider the kernels of linear partial differential operators on $\mathcal{CV}^{k}(\Omega,E)$ 
from \prettyref{ex:weighted_smooth_functions} f) for an lcHs $E$, $k\in\N_{\infty}$ and open $\Omega\subset\R^{d}$ as an application.  
So the partial differential operators we want to consider are of the following form. 
Let $n\in\N$, $\beta_{i}\in\N_{0}^{d}$ with $|\beta_{i}|\leq\min(m,k)$ and 
$a_{i}\colon\Omega\to\K$ for $1\leq i\leq n$. We set 
\[
 P(\partial)^{E}\colon \mathcal{C}^{k}(\Omega,E)\to E^{\Omega},\; P(\partial)^{E}(f)(x):=\sum_{i=1}^{n}a_{i}(x)(\partial^{\beta_{i}})^{E}(f)(x).
\]
and recall that
\[
 \mathcal{CV}_{P(\partial)}^{k}(\Omega,E):=\{f\in\mathcal{CV}^{k}(\Omega,E)\;|\;f\in\ker P(\partial)^{E}\}.
\]

\begin{exa}\label{ex:kernel_P(D)}
Let $E$ be an lcHs, $k\in\N_{\infty}$, $\mathcal{V}^{k}$ be a directed family of weights 
which is locally bounded away from zero on an open set $\Omega\subset\R^{d}$. 
\begin{enumerate}
\item [a)] $\mathcal{CV}^{k}_{P(\partial)}(\Omega,E)\cong \mathcal{CV}^{k}_{P(\partial)}(\Omega)\varepsilon E$ if 
$E$ is a semi-Montel space and $\mathcal{CV}^{k}(\Omega)$ barrelled.
\item [b)] $\mathcal{CV}^{k}_{P(\partial)}(\Omega,E)\cong \mathcal{CV}^{k}_{P(\partial)}(\Omega)\varepsilon E$ if $E$ is complete,
$\mathcal{CV}^{k}(\Omega)$ a Montel space and $\mathcal{CV}^{k}_{P(\partial)}(\Omega)$ closed in $\mathcal{CV}^{k}(\Omega)$.
\item [c)] $\mathcal{CW}^{k}_{P(\partial)}(\Omega,E)\cong \mathcal{CW}^{k}_{P(\partial)}(\Omega)\varepsilon E$ if $k<\infty$ and 
$E$ has metric ccp or if $k=\infty$ and $E$ is locally complete. 
\end{enumerate}
\end{exa}
\begin{proof}
Due to (the proofs of) \prettyref{ex:weighted_diff}, \prettyref{ex:diff_usual} and \prettyref{prop:linearisation_subspace}
we only need to show that $S(u)\in\ker P(\partial)^{E}$ for all $u\in\mathcal{CV}^{k}_{P(\partial)}(\Omega)\varepsilon E$ 
and $e'\circ f\in\ker P(\partial)^{\K}$ for all $e'\in E'$ and $f\in\mathcal{CV}^{k}_{P(\partial)}(\Omega,E)$ 
(in c) for $\mathcal{V}^{k}=\mathcal{W}^{k}$). The second part is clear and the first follows from 
\begin{align*}
 P(\partial)^{E}(S(u))(x)&=\sum_{i=1}^{n}a_{i}(x)(\partial^{\beta_{i}})^{E}(S(u))(x)=u\bigl(\sum_{i=1}^{n}a_{i}(x)(\delta_{x}\circ (\partial^{\beta_{i}})^{\K})\bigr)\\
 &=u(\delta_{x}\circ P(\partial)^{\K}),\quad x\in\Omega,
\end{align*}
for every $u\in \mathcal{CV}^{k}_{P(\partial)}(\Omega)\varepsilon E$  
by \prettyref{prop:diff_cons_barrelled} and \prettyref{rem:UR_cons_str}.
\end{proof}

A special case of example c) is already known to be a consequence of \cite[Theorem 9, p.\ 232]{B/F/J}, namely, if $k=\infty$ 
and $P(\partial)$ is hypoelliptic with constant coefficients. 
In particular, this covers the space of holomorphic functions and the space of harmonic functions. 
The special case of example b) of holomorphic functions with exponential growth on strips is handled in 
\cite[3.11 Theorem, p.\ 31]{ich}.
Holomorphy on infinite dimensional spaces is treated in \cite[Corollary 6.35, p.\ 332-333]{dineen1981} 
where $\mathcal{V}=\mathcal{W}^{0}$, $\Omega$ is an open subset of a locally convex Hausdorff $k$-space and
$E$ a quasi-complete locally convex Hausdorff space, both over $\C$, which can be generalised to 
$E$ with [metric] ccp in a similar way.  

\section{The spaces $\operatorname{AP}(\Omega,E)$ and consistency}
This section is dedicated to the properties of functions which are compatible with the $\varepsilon$-product in the 
sense that the space of functions having these properties can be chosen as the space $\operatorname{AP}(\Omega,E)$ 
or $\bigcap_{m\in M}\dom T^{E}_{m}$ in the \prettyref{def:consist} of consistency. 
This is done in a quite general way so that we are not tied to 
certain spaces and have to redo our argumentation, for example, 
if we consider the same generator $(T^{E}_{m},T^{\K}_{m})_{m\in M}$ 
for two different spaces of functions. 

Due to the linearity and continuity of $u\in\FV\varepsilon E$ for a $\dom$-space $\FV$ and $S(u)=u\circ \delta$ 
with $\delta\colon\Omega\to\FV'$, $x\mapsto \delta_{x}$, these are properties which are
purely pointwise or given by pointwise approximation. 
Among the properties of functions are continuity by \prettyref{prop:stetig.cons}, 
Cauchy continuity by \prettyref{prop:c-stetig.cons}, uniform continuity by \prettyref{prop:u-stetig.cons}, continuous extendability by 
\prettyref{prop:cont_ext}, continuous differentiability by \prettyref{prop:diffb.fam}, 
vanishing at infinity by \prettyref{prop:van.at.inf0}
and purely pointwise properties of a function like linearity by \prettyref{prop:linearity}.

We collect these properties in propositions and in follow-up lemmas we handle properties which can be described 
by compositions of defining operators $T^{E}_{m_{1}}\circ T^{E}_{m_{2}}$ like continuous differentiability of higher order. 
We fix the following notation for this section. For a $\dom$-space $\FV$ and linear $T\colon\FV\to\K^{\Omega}$ we set 
$(\delta\circ T)(x)(f):=(\delta_{x}\circ T)(f):=T(f)(x)$ for all $x\in\Omega$ and $f\in\FV$.

\begin{prop}[{continuity}]\label{prop:stetig.cons}
Let $\Omega$ be a topological Hausdorff space and $\FV$ a $\dom$-space 
such that $\FV\subset\mathcal{C}(\Omega)$ as a linear subspace. 
Then $S(u)\in\mathcal{C}(\Omega,E)$ for all $u\in\FV\varepsilon E$ if 
$\delta\in\mathcal{C}(\Omega,\FV_{\kappa}')$.  
\end{prop}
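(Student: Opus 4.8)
The plan is to recognise the conclusion as a statement about the composition of two continuous maps. As observed just before the statement, for every $u\in\FV\varepsilon E$ one has the factorisation $S(u)=u\circ\delta$, where $\delta\colon\Omega\to\FV_{\kappa}'$, $x\mapsto\delta_{x}$, is the evaluation map; this factorisation makes sense because the $\dom$-space assumption guarantees $\delta_{x}\in\FV'$ for every $x\in\Omega$. So the whole task reduces to checking that both factors are continuous, respectively into and out of, the space $\FV_{\kappa}'$.

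First I would note that, by the definition \eqref{notation0} of the $\varepsilon$-product, we have $u\in\FV\varepsilon E=L_{e}(\FV_{\kappa}',E)$ and in particular $u\in L(\FV_{\kappa}',E)$, i.e.\ $u$ is a \emph{continuous} linear operator from $\FV_{\kappa}'$ to $E$. Second, the hypothesis $\delta\in\mathcal{C}(\Omega,\FV_{\kappa}')$ says exactly that $\delta$ is continuous as a map from the topological Hausdorff space $\Omega$ into $\FV_{\kappa}'$. Since the topology on the target of $\delta$ is the same $\kappa(\FV',\FV)$-topology on which $u$ is continuous, the composition $S(u)=u\circ\delta\colon\Omega\to E$ is continuous, so that $S(u)\in\mathcal{C}(\Omega,E)$ for every $u\in\FV\varepsilon E$, as desired.

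There is no real obstacle in this argument; it is a clean two-step composition, and the only point meriting a moment's attention is the matching of topologies, namely that the codomain topology of $\delta$ coincides with the domain topology on which every element of $\FV\varepsilon E$ is continuous. This is immediate from the definitions. The inclusion $\FV\subset\mathcal{C}(\Omega)$ merely fixes the ambient setting in which continuity of the scalar functions makes $\mathcal{C}(\Omega,E)$ the appropriate target; in fact, since the $\kappa$-topology is finer than $\sigma(\FV',\FV)$, the assumption $\delta\in\mathcal{C}(\Omega,\FV_{\kappa}')$ already forces $x\mapsto\delta_{x}(f)=f(x)$ to be continuous for each $f\in\FV$, so the substantive content is entirely carried by the continuity of $\delta$.
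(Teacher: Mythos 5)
Your proposal is correct and follows exactly the paper's own argument: the paper's proof is precisely the observation that $S(u)=u\circ\delta$ is a composition of the continuous map $\delta\colon\Omega\to\FV_{\kappa}'$ with the continuous linear operator $u\in L(\FV_{\kappa}',E)$, which is what you spelled out. Your additional remarks on the matching of topologies and the role of the inclusion $\FV\subset\mathcal{C}(\Omega)$ are accurate but not needed beyond the paper's one-line proof.
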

\begin{proof}
Let $u\in\FV\varepsilon E$. Since $S(u)=u\circ\delta$ and $\delta\in\mathcal{C}(\Omega,\FV_{\kappa}')$, we obtain 
that $S(u)$ is in $\mathcal{C}(\Omega,E)$.
\end{proof}

Now, we tackle the problem of the continuity of $\delta\colon\Omega\to\FV_{\kappa}'$ 
in the proposition above and phrase our solution in a way such that it can be applied to show the 
continuity of the partial derivative $(\partial^{\beta})^{E}(S(u))$ as well. 
We recall that a topological space $\Omega$ is called completely regular (Tychonoff 
or $T_{3\nicefrac{1}{2}}$-space) if for any non-empty closed subset $F\subset\Omega$ and 
$x\in\Omega\setminus F$ there is $f\in\mathcal{C}(\Omega,[0,1])$ 
such that $f(x)=0$ and $f(z)=1$ for all $z\in F$ (see \cite[Definition 11.1, p.\ 180]{james}). 
Examples of completely regular spaces are uniformisable, particularly metrisable, spaces 
by \cite[Proposition 11.5, p.\ 181]{james} and locally convex Hausdorff spaces by \cite[Proposition 3.27, p.\ 95]{fabian}.
A completely regular space $\Omega$ is a $k_{\mathbb{R}}$-space if for any 
completely regular space $Y$ and any map $f\colon \Omega \to Y$, 
whose restriction to each compact $K\subset\Omega$ is continuous, the map is already continuous on $\Omega$ 
(see \cite[(2.3.7) Proposition, p.\ 22]{buchwalter}). 
Examples of $k_{\mathbb{R}}$-spaces are completely regular $k$-spaces by \cite[3.3.21 Theorem, p.\ 152]{engelking}.
A topological space $\Omega$ is called $k$-space (compactly generated space) if it satisfies the following condition:
$A\subset \Omega$ is closed if and only if $A\cap K$ is closed in $K$ for every compact $K\subset\Omega$.  
Every locally compact Hausdorff space is a completely regular $k$-space. Further, every sequential Hausdorff space 
is a $k$-space by \cite[3.3.20 Theorem, p.\ 152]{engelking}, in particular, every first-countable Hausdorff space. 
Thus metrisable spaces are completely regular $k$-spaces. Moreover, the strong dual of a Fr\'{e}chet-Montel space 
(DFM-space) is a completely regular $k$-space by \cite[4.11 Theorem, p.\ 39]{kriegl}.

We denote by $\mathcal{CW}(\Omega)$ the space of scalar-valued continuous functions on a 
topological Hausdorff space $\Omega$ with the topology of uniform convergence on compact subsets, 
i.e.\ the weighted topology given by the family of weights
$\mathcal{W}:=\mathcal{W}^{0}:=\{\chi_{K}\;|\;K\subset\Omega\;\text{compact}\}$, 
and by $\mathcal{C}_{b}(\Omega)$ the space of scalar-valued bounded, continuous functions on $\Omega$ 
with the topology of uniform convergence on $\Omega$.

\begin{lem}\label{lem:bier}
 Let $\Omega$ be a topological Hausdorff space, $\FV$ a $\dom$-space and 
 $T\colon\FV\to\mathcal{C}(\Omega)$ linear.
 Then $\delta\circ T\in\mathcal{C}(\Omega,\FV_{\gamma}')$ in 
 each of the subsequent cases:
 \begin{enumerate}
 \item [(i)] $\Omega$ is a $k_{\R}$-space and 
 $T\colon\FV\to\mathcal{CW}(\Omega)$ is continuous.
 \item [(ii)] $T\colon\FV\to\mathcal{C}_{b}(\Omega)$ is continuous.
 \end{enumerate}
\end{lem}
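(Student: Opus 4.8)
The plan is to treat both cases by the same three-$\varepsilon$ argument, the only difference being which seminorm on $\mathcal{C}(\Omega)$ carries the total boundedness. First I would record that $\delta\circ T$ genuinely takes values in $\FV'$: the point evaluation $\delta_{x}$ is continuous on $\mathcal{CW}(\Omega)$ (since $\{x\}$ is compact) and on $\mathcal{C}_{b}(\Omega)$ (since $|\delta_{x}(g)|\leq\|g\|_{\infty}$), so in either case $(\delta\circ T)(x)=\delta_{x}\circ T$ is a composition of continuous linear maps and hence lies in $\FV'$. The topology of $\FV_{\gamma}'$ is generated by the seminorms $f'\mapsto\sup_{f\in P}|f'(f)|$ as $P$ ranges over the precompact subsets of $\FV$, so continuity of $\delta\circ T$ at a point $x_{0}$ reduces to showing that for every precompact $P\subset\FV$ and every $\varepsilon>0$ there is a neighbourhood $U$ of $x_{0}$ with $\sup_{f\in P}|T(f)(x)-T(f)(x_{0})|<\varepsilon$ for $x\in U$.

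For case (ii) I would argue directly. As $T$ is continuous and linear, $T(P)$ is precompact, hence totally bounded, in $\mathcal{C}_{b}(\Omega)$ for $\|\cdot\|_{\infty}$. Selecting a finite $(\varepsilon/3)$-net with centres lying in $T(P)$ yields $f_{1},\dots,f_{n}\in P$ such that every $f\in P$ satisfies $\|T(f)-T(f_{i})\|_{\infty}<\varepsilon/3$ for some $i$. Each $T(f_{i})$ is continuous at $x_{0}$, so there is a neighbourhood $U$ of $x_{0}$ on which $|T(f_{i})(x)-T(f_{i})(x_{0})|<\varepsilon/3$ for all $i$; the triangle inequality then gives $|T(f)(x)-T(f)(x_{0})|<\varepsilon$ uniformly over $f\in P$, the $\|\cdot\|_{\infty}$-estimate controlling the difference at both $x$ and $x_{0}$. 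This is precisely the required continuity into $\FV_{\gamma}'$, and no assumption on $\Omega$ enters because the sup-norm estimate is uniform in $x$ outright.

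For case (i) the compactness lives only in the seminorm $\|g\|_{K}:=\sup_{x\in K}|g(x)|$, so I would first reduce to compact subsets. Since $\FV_{\gamma}'$ is locally convex Hausdorff (the singleton seminorms $f'\mapsto|f'(f)|$ already separate points) it is completely regular, and $\Omega$ is a $k_{\R}$-space; hence it suffices to prove that the restriction of $\delta\circ T$ to each compact $K\subset\Omega$ is continuous. Fixing such a $K$ and $x_{0}\in K$, precompactness of $T(P)$ in $\mathcal{CW}(\Omega)$ gives total boundedness for $\|\cdot\|_{K}$, so I would pick $f_{1},\dots,f_{n}\in P$ with $\sup_{x\in K}|T(f)(x)-T(f_{i})(x)|<\varepsilon/3$ for some $i$, for each $f\in P$. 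Running the identical three-$\varepsilon$ estimate, now using that both $x$ and $x_{0}$ lie in $K$, produces a neighbourhood of $x_{0}$ on which the variation of $T(f)$ stays below $\varepsilon$ uniformly in $f\in P$; this establishes continuity on $K$, and the $k_{\R}$-property upgrades it to continuity on all of $\Omega$.

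The routine steps (precompact images under continuous linear maps, and the existence and separating property of the $\gamma$-seminorms) are immediate. The single point that needs care is the selection of the finite net \emph{inside} $P$ and with respect to the \emph{correct} seminorm, so that the net centres are honest elements $T(f_{i})$ whose pointwise continuity at $x_{0}$ can be invoked; this is what aligns the argument with the $\gamma$-topology rather than a finer one. The only genuinely structural input beyond this is the $k_{\R}$-reduction in (i), which is exactly what forces the hypothesis on $\Omega$ there and is absent in (ii).
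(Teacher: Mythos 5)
Your proof is correct, and it reaches the conclusion by a genuinely different mechanism than the paper's. The paper argues in two abstract steps: first, $\delta\circ T$ is continuous into $\FV_{\sigma}'$ simply because each $T(f)$ is a continuous scalar function; second, the continuity estimate for $T$ (into $\mathcal{CW}(\Omega)$ resp.\ $\mathcal{C}_{b}(\Omega)$) shows that $\{\delta_{x}\circ T\;|\;x\in K\}$ resp.\ $\{\delta_{x}\circ T\;|\;x\in\Omega\}$ is an \emph{equicontinuous} subset of $\FV'$, and on equicontinuous sets the topologies $\sigma(\FV',\FV)$ and $\gamma(\FV',\FV)$ coincide, so weak-$\ast$ continuity upgrades to $\gamma$-continuity for free; the $k_{\R}$-reduction in case (i) is then the same as yours. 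You never invoke equicontinuity or that coincidence-of-topologies fact: instead you push the precompact set $P$ forward through $T$, use total boundedness of $T(P)$ in the relevant sup-seminorm, and run an explicit finite-net $\varepsilon/3$ argument whose centres $T(f_{i})$ supply the pointwise continuity. These are dual unfoldings of the same Ascoli-type phenomenon — the standard proof of the paper's cited fact is itself an $\varepsilon/3$ argument, but with total boundedness living in $\FV$ and the uniformity coming from equicontinuity of the functionals, whereas yours puts the total boundedness in the target function space and the uniformity in the sup-norm estimate. What the paper's route buys is brevity and immediate well-definedness of $\delta\circ T$ from one quotable principle; what yours buys is a self-contained, elementary verification directly against the defining seminorms of $\FV_{\gamma}'$, at the cost of having to be careful (as you correctly were) that the net centres lie in $T(P)$ and that the $\|\cdot\|_{K}$-estimate is only applied at points of $K$. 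Your explicit justification that $\FV_{\gamma}'$ is completely regular, so that the $k_{\R}$-property applies, is a point the paper leaves implicit.
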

\begin{proof}
First, if $x\in\Omega$ and $(x_{\tau})_{\tau\in\mathcal{T}}$ is a net in $\Omega$ converging to $x$, then 
we observe that 
\[
(\delta_{x_{\tau}}\circ T)(f)=T(f)(x_{\tau})\to T(f)(x)=(\delta_{x}\circ T)(f)
\]
for every $f\in\FV$ as $T(f)$ is continuous on $\Omega$. 
\end{proof}
\begin{enumerate}
\item [(i)] Let $K\subset\Omega$ be compact. Then there are $j\in J$, $m\in M$ and $C>0$ such that
   \[
     \sup_{x\in K}|(\delta_{x}\circ T)(f)|=\sup_{x\in K}|T(f)(x)|
     \leq C|f|_{j,m}
   \]
for every $f\in\FV$. This means that $\{\delta_{x}\circ T\;|\;x\in K\}$ is equicontinuous in $\FV'$. The topologies 
$\sigma(\FV',\FV)$ and $\gamma(\FV',\FV)$ 
coincide on equicontinuous subsets of $\FV'$ 
implying that the restriction $(\delta\circ T)_{\mid_{K}}\colon K\to \FV_{\gamma}'$ 
is continuous by our first observation. As $\delta\circ T$ is continuous on every compact subset of 
the $k_{\R}$-space $\Omega$, it follows that 
$\delta\circ T\colon \Omega\to \FV_{\gamma}'$ is well-defined and continuous.
\item [(ii)] There are $j\in J$, $m\in M$ and $C>0$ such that
   \[
     \sup_{x\in  \Omega}|(\delta_{x}\circ T)(f)|=\sup_{x\in \Omega}|T(f)(x)|
     \leq C|f|_{j,m}
   \]
for every $f\in\FV$. This means that $\{\delta_{x}\circ T\;|\;x\in \Omega\}$ is equicontinuous 
in $\FV'$ yielding to the statement like before.
\end{enumerate}

The preceding lemma is just a modification of \cite[4.1 Lemma, p.\ 198]{B1} 
where $\FV=\mathcal{CV}(\Omega)$, the space of Nachbin-weighted continuous functions, and 
$T=\id$. 
Next, we consider the special case of continuous, linear 
operators. Let $(F,t)$ be a locally convex Hausdorff space with topology $t$ and $F'$ the dual with respect to $t$. 
Due to the Mackey-Arens theorem $F=(F_{\kappa}'\bigr)'$ holds algebraically and thus 
$\delta\colon F\to \bigl(F_{\kappa}'\bigr)_{\kappa}'$ 
induces a locally convex topology $\varsigma$ on $F$. This topology fulfils $t\leq\varsigma\leq\tau(F,F')$. 
In particular, if $F$ is a Mackey space, i.e.\ $t=\tau(F,F')$, then $t=\varsigma$ 
(see \cite[Chap.\ I, \S1, p.\ 17]{Sch1} where the topology $\varsigma$ is called $\gamma$).

\begin{rem}\label{rem:bier2}
 Let $\Omega$ be a locally convex Hausdorff space. 
\begin{enumerate}
\item [(i)] The map $\delta\colon \Omega\to (\Omega_{\kappa}')_{\kappa}'$ is continuous if
  $\Omega$ has the topology $\varsigma$, in particular, if $\Omega$ is quasi-barrelled or bornological. 
\item [(ii)] The map $\delta\colon \Omega\to (\Omega_{b}')_{\kappa}'$ is continuous if 
$\Omega$ is normed or semi-reflexive and metrisable. 
\end{enumerate}
\end{rem}
\begin{proof}
Part (i) follows directly from the definition of $\varsigma$. Further, if $\Omega$ is quasi-barrelled, 
then it has the Mackey-topology by \cite[Observation 4.1.5 (a), p.\ 96]{Bonet}, 
and, if $\Omega$ is bornological, then it is quasi-barrelled by \cite[Observation 6.1.2 (c), p.\ 167]{Bonet}.  
Let us turn to part (ii). Let $(x_{n})$ be a sequence in $\Omega$ converging to $x\in\Omega$. We observe that 
$(\delta_{x_{n}})$ converges to $\delta_{x}$ in $(\Omega_{b}')_{\sigma}'$. If $\Omega$ is normed or a semi-reflexive, 
metrisable space, then $\Omega_{b}'$ is barrelled since it is a Banach space resp.\ 
by \cite[11.4.1 Proposition, p.\ 227]{Jarchow}. The Banach-Steinhaus theorem yields our result.
\end{proof}

Next, we turn to Cauchy continuity. For a metric space $\Omega$ we write $\mathcal{CC}(\Omega,E)$ for 
the space of Cauchy continuous functions from $\Omega$ to $E$ and set $\mathcal{CC}(\Omega):=\mathcal{CC}(\Omega,\K)$.
 
\begin{prop}[{Cauchy continuity}]\label{prop:c-stetig.cons}
Let $\Omega$ be a metric space and $\FV$ a $\dom$-space such that $\FV\subset\mathcal{CC}(\Omega)$ as a linear subspace. 
Then $S(u)\in\mathcal{CC}(\Omega,E)$ for all $u\in\FV\varepsilon E$ if 
$\delta\in\mathcal{CC}(\Omega,\FV_{\kappa}')$.  
\end{prop}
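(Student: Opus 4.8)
The plan is to argue exactly as in the proof of \prettyref{prop:stetig.cons}, replacing ordinary continuity by Cauchy continuity throughout and exploiting that every continuous linear operator between locally convex spaces is automatically uniformly continuous. First I would fix $u\in\FV\varepsilon E$ and recall that $S(u)=u\circ\delta$, where $\delta\colon\Omega\to\FV_{\kappa}'$, $x\mapsto\delta_{x}$, is Cauchy continuous by hypothesis and $u\colon\FV_{\kappa}'\to E$ is linear and continuous.

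The key step is the general principle that the composition of a Cauchy continuous map with a uniformly continuous map is again Cauchy continuous. To make this concrete, I would take an arbitrary Cauchy sequence $(x_{n})_{n\in\N}$ in the metric space $\Omega$; since $\delta\in\mathcal{CC}(\Omega,\FV_{\kappa}')$, the sequence $(\delta_{x_{n}})_{n\in\N}$ is Cauchy in $\FV_{\kappa}'$. Because $u$ is linear and continuous, it is continuous at $0$, so for every $0$-neighbourhood $V\subset E$ there is a $0$-neighbourhood $U\subset\FV_{\kappa}'$ with $u(U)\subset V$; consequently $u(y)-u(z)=u(y-z)\in V$ whenever $y-z\in U$, which is precisely uniform continuity of $u$. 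Applying this to the Cauchy sequence $(\delta_{x_{n}})_{n}$ then shows that $\bigl(S(u)(x_{n})\bigr)_{n}=\bigl(u(\delta_{x_{n}})\bigr)_{n}$ is Cauchy in $E$, and since $(x_{n})_{n}$ was an arbitrary Cauchy sequence this yields $S(u)\in\mathcal{CC}(\Omega,E)$.

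I expect essentially no serious obstacle here; the only point one must not overlook is that Cauchy continuity, in contrast to plain continuity, is not preserved under composition with arbitrary continuous maps, so the argument genuinely relies on the (automatic) uniform continuity of the linear operator $u$ rather than on its continuity alone. Beyond that, everything is a direct transcription of the continuity argument in \prettyref{prop:stetig.cons}.
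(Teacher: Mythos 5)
Your proof is correct and takes essentially the same approach as the paper: fix a Cauchy sequence $(x_{n})$ in $\Omega$, use the hypothesis $\delta\in\mathcal{CC}(\Omega,\FV_{\kappa}')$ to get that $(\delta_{x_{n}})$ is Cauchy in $\FV_{\kappa}'$, and then use the uniform continuity of the linear continuous map $u$ to conclude that $\bigl(S(u)(x_{n})\bigr)=\bigl(u(\delta_{x_{n}})\bigr)$ is Cauchy in $E$. The only difference is that the paper invokes the uniform continuity of $u$ without proof, whereas you spell out the standard verification that continuity at $0$ plus linearity yields it.
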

\begin{proof}
Let $u\in\FV\varepsilon E$ and $(x_{n})$ a Cauchy sequence in $\Omega$. Then $(\delta_{x_{n}})$ is a Cauchy 
sequence in $\FV_{\kappa}'$ since $\delta\in\mathcal{CC}(\Omega,\FV_{\kappa}')$. It follows that $(S(u)(x_{n}))$ is a 
Cauchy sequence in $E$ because $u$ is uniformly continuous and $u(\delta_{x_{n}})=S(u)(x_{n})$. 
Hence we conclude that $S(u)\in\mathcal{CC}(\Omega,E)$. 
\end{proof}

For the next lemma we equip the space $\mathcal{CC}(\Omega)$ with the topology 
of uniform convergence on precompact subsets of $\Omega$.

\begin{lem}\label{lem:bier3}
 Let $\Omega$ be metric, $\FV$ a $\dom$-space and $T\in L(\FV,\mathcal{CC}(\Omega))$.
 Then $\delta\circ T\in\mathcal{CC}(\Omega,\FV_{\gamma}')$.
\end{lem}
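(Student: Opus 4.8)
The plan is to prove Cauchy continuity of $\delta\circ T\colon\Omega\to\FV_{\gamma}'$ directly from the definition: I fix an arbitrary Cauchy sequence $(x_{n})$ in $\Omega$ and show that the sequence of functionals $g_{n}:=\delta_{x_{n}}\circ T$, given by $g_{n}(f)=T(f)(x_{n})$ for $f\in\FV$, is a Cauchy sequence in $\FV_{\gamma}'$. First I would record that each $g_{n}$ indeed lies in $\FV'$: a singleton is precompact, so point evaluation is continuous on $\mathcal{CC}(\Omega)$ for the precompact-open topology, and composing with the continuous operator $T$ shows $g_{n}\in\FV'$; this also makes $\delta\circ T$ well-defined as a map into $\FV'$.

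The first genuine step is to establish equicontinuity of the set $H:=\{g_{n}\;|\;n\in\N\}$. Since $(x_{n})$ is a Cauchy sequence in a metric space, its underlying set $A:=\{x_{n}\;|\;n\in\N\}$ is totally bounded, hence precompact in $\Omega$. The continuity of $T\colon\FV\to\mathcal{CC}(\Omega)$ for the topology of uniform convergence on precompact subsets then yields $j\in J$, $m\in M$ and $C>0$ with
\[
\sup_{n\in\N}|g_{n}(f)|=\sup_{x\in A}|T(f)(x)|\leq C|f|_{j,m},\quad f\in\FV,
\]
so that $H$ is equicontinuous in $\FV'$, exactly as in the proof of \prettyref{lem:bier}. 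Next I would check that $(g_{n})$ is Cauchy for the weak topology $\sigma(\FV',\FV)$: for fixed $f\in\FV$ the function $T(f)$ lies in $\mathcal{CC}(\Omega)$ and is therefore Cauchy continuous, so applying it to the Cauchy sequence $(x_{n})$ shows that $(g_{n}(f))_{n}=(T(f)(x_{n}))_{n}$ is a Cauchy sequence in $\K$, which is precisely pointwise (weak) Cauchyness of $(g_{n})$.

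The main obstacle is to upgrade weak Cauchyness to $\gamma$-Cauchyness, i.e.\ to uniform Cauchyness on precompact subsets of $\FV$; this is the Cauchy-sequence refinement of the fact, used in \prettyref{lem:bier}, that $\sigma(\FV',\FV)$ and $\gamma(\FV',\FV)$ coincide on equicontinuous sets. Concretely, given a precompact $P\subset\FV$ and $\varepsilon>0$, I would cover $P$ by finitely many $|\cdot|_{j,m}$-balls of radius $\varepsilon/(3C)$ centred at $f_{1},\dots,f_{r}\in P$ (possible by total boundedness of $P$ for the seminorm $|\cdot|_{j,m}$). For $f\in P$ choose a centre $f_{i}$ with $|f-f_{i}|_{j,m}<\varepsilon/(3C)$; equicontinuity bounds $|g_{n}(f)-g_{n}(f_{i})|\leq C|f-f_{i}|_{j,m}<\varepsilon/3$ uniformly in $n$, and the weak Cauchyness at the finitely many centres gives $N$ with $|g_{n}(f_{i})-g_{n'}(f_{i})|<\varepsilon/3$ for all $i$ and $n,n'\geq N$. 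The triangle inequality then yields $\sup_{f\in P}|g_{n}(f)-g_{n'}(f)|\leq\varepsilon$ for $n,n'\geq N$, so $(g_{n})$ is Cauchy in $\FV_{\gamma}'$. As $(x_{n})$ was an arbitrary Cauchy sequence, this proves $\delta\circ T\in\mathcal{CC}(\Omega,\FV_{\gamma}')$.
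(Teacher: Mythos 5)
Your proof is correct, and its first two-thirds coincide with the paper's own argument: well-definedness of $\delta_{x}\circ T$ via continuity of point evaluations (the paper gets this instead from the constant-sequence trick at the very end), equicontinuity of $\{\delta_{x_{n}}\circ T\;|\;n\in\N\}$ from the precompactness of the range of a Cauchy sequence together with $T\in L(\FV,\mathcal{CC}(\Omega))$, and pointwise Cauchyness of $(\delta_{x_{n}}\circ T)(f)=T(f)(x_{n})$ from $T(f)\in\mathcal{CC}(\Omega)$. The finish, however, is genuinely different. The paper uses the completeness of $\K$ to form the pointwise limit $T_{\infty}(f):=\lim_{n}T(f)(x_{n})$, infers $T_{\infty}\in\FV'$ from equicontinuity, and then invokes the standard fact that $\sigma(\FV',\FV)$ and $\gamma(\FV',\FV)$ coincide on equicontinuous sets to conclude that $(\delta_{x_{n}}\circ T)$ actually \emph{converges} to $T_{\infty}$ in $\FV_{\gamma}'$, which is stronger than the required Cauchyness. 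You never construct a limit at all: your $\varepsilon/3$ argument (total boundedness of a precompact $P\subset\FV$ for the seminorm $|\cdot|_{j,m}$, the uniform equicontinuity bound, and Cauchyness at finitely many centres) proves directly that weak Cauchyness plus equicontinuity yields uniform Cauchyness on precompact sets — in effect you re-prove, at the level of uniformities, the very coincidence result the paper cites. What each approach buys: the paper's is shorter given the cited fact and yields convergence, not just Cauchyness, of the image sequence; yours is self-contained, avoids any appeal to completeness of the scalar field, and would survive verbatim if $\K$ were replaced by a non-complete normed target, since Cauchy continuity only ever asks for Cauchy images.
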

\begin{proof}
 Let $(x_{n})$ be a Cauchy sequence in $\Omega$.
 We have $(\delta_{x_{n}}\circ T)(f)=T(f)(x_{n})$
 for every $f\in\FV$ which implies that 
 $((\delta_{x_{n}}\circ T)(f))$ is a Cauchy sequence in $\K$ 
 because $T(f)\in\mathcal{CC}(\Omega)$ by assumption. 
 Since $\K$ is complete, it has a unique limit $T_{\infty}(f):=\lim_{n\to\infty}(\delta_{x_{n}}\circ T)(f)$ 
 defining a linear functional in $f$. 
 The set $N:=\{x_{n}\;|\;n\in\N\}$ is precompact in $\Omega$ since Cauchy sequences are precompact. 
 Hence there are $j\in J$, $m\in M$ and $C>0$ such that
   \[
     \sup_{n\in\N}|(\delta_{x_{n}}\circ T)(f)|
     =\sup_{x\in N}|T(f)(x)|
     \leq C|f|_{j,m}
   \]
 for every $f\in\FV$. Therefore the set $\{\delta_{x_{n}}\circ T\;|\;n\in\N\}$ is 
 equicontinuous in $\FV'$ which implies that $T_{\infty}\in\FV'$
 and the convergence of $(\delta_{x_{n}}\circ T)$ to 
 $T_{\infty}$ in $\FV_{\gamma}'$ 
 due to the observation in the beginning and the fact that $\gamma(\FV',\FV)$ 
 and $\sigma(\FV',\FV)$ coincide on equicontinuous sets. 
 In particular, $(\delta_{x_{n}}\circ T)$ is a Cauchy sequence in $\FV_{\gamma}'$. 
 Furthermore, for every $x\in\Omega$ we obtain from the choice $x_{n}=x$ for all $n\in\N$ that 
 $\delta_{x}\circ T\in\FV'$. Thus the map $\delta\circ T\colon\Omega\to\FV_{\gamma}'$ is well-defined and 
 Cauchy continuous.
\end{proof}

The subsequent proposition and lemma handle the analogous statements for uniform continuity. 
For a metric space $\Omega$ we denote by $\mathcal{C}_{u}(\Omega,E)$ the space of uniformly continuous 
functions from $\Omega$ to $E$ and set $\mathcal{C}_{u}(\Omega):=\mathcal{C}_{u}(\Omega,\K)$.

\begin{prop}[{uniform continuity}]\label{prop:u-stetig.cons}
Let $(\Omega,\d)$ be a metric space and $\FV$ a $\dom$-space such that $\FV\subset\mathcal{C}_{u}(\Omega)$ as a linear subspace. 
Then $S(u)\in\mathcal{C}_{u}(\Omega,E)$ for all $u\in\FV\varepsilon E$ if 
$\delta\in\mathcal{C}_{u}(\Omega,\FV_{\kappa}')$.  
\end{prop}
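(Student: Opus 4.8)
The plan is to mirror the proof of the Cauchy-continuity analogue (\prettyref{prop:c-stetig.cons}), exploiting that $S(u)=u\circ\delta$ is a composition and that the outer map $u$ is linear. First I would observe that every $u\in\FV\varepsilon E=L_{e}(\FV_{\kappa}',E)$ is in particular a continuous linear operator from $\FV_{\kappa}'$ into $E$, and that any continuous linear map between locally convex Hausdorff spaces is automatically uniformly continuous with respect to the canonical translation-invariant uniformities. This is the structural reason why uniform continuity passes through the composition, exactly as Cauchy continuity did in \prettyref{prop:c-stetig.cons}.

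The core step is then a direct $\varepsilon$-$\delta$ argument. Given a $0$-neighbourhood $V$ in $E$, I would first shrink it to a basic neighbourhood $\{z\in E\;|\;p_{\alpha}(z)<\varepsilon\}\subset V$ for suitable $\alpha\in\mathfrak{A}$ and $\varepsilon>0$. Continuity of the linear operator $u$ then supplies a $0$-neighbourhood $W$ in $\FV_{\kappa}'$ with $u(W)\subset\{z\in E\;|\;p_{\alpha}(z)<\varepsilon\}$; by linearity this upgrades to the uniform statement that $f'-g'\in W$ forces $u(f')-u(g')\in V$. Next, the uniform continuity of $\delta\colon\Omega\to\FV_{\kappa}'$, which is the hypothesis, produces $\eta>0$ such that $\d(x,y)<\eta$ implies $\delta_{x}-\delta_{y}\in W$. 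Chaining the two yields $S(u)(x)-S(u)(y)=u(\delta_{x})-u(\delta_{y})=u(\delta_{x}-\delta_{y})\in V$ whenever $\d(x,y)<\eta$, which is precisely uniform continuity of $S(u)$.

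I do not expect a serious obstacle here; the only point requiring care is that the uniformity on $E$ and on $\FV_{\kappa}'$ is the additive (translation-invariant) one, so that continuity of the linear map $u$ merely at $0$ genuinely promotes to uniform continuity on all of $\FV_{\kappa}'$. The remainder is just the bookkeeping of composing two uniformly continuous maps, and the argument does not use any special feature of the $\kappa$-topology beyond the continuity of $u$ already guaranteed by $u\in\FV\varepsilon E$.
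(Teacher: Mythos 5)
Your proof is correct and takes essentially the same approach as the paper: both decompose $S(u)=u\circ\delta$ and push the uniform continuity of $\delta$ through the composition using the linearity and continuity of $u\in\FV\varepsilon E$. The only cosmetic difference is that the paper verifies uniform continuity of $S(u)$ via pairs of sequences $(z_{n})$, $(x_{n})$ with $\d(z_{n},x_{n})\to 0$, whereas you argue directly with $0$-neighbourhoods in $\FV_{\kappa}'$ and $E$; the two formulations are equivalent for maps from a metric space into a locally convex space.
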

\begin{proof}
Let $u\in\FV\varepsilon E$ and $(z_{n})$, $(x_{n})$ be sequences in $\Omega$ with $\lim_{n\to\infty}\d(z_{n},x_{n})=0$. 
Then $(\delta_{z_{n}}-\delta_{x_{n}})$ converges to $0$ in $\FV_{\kappa}'$ as $\delta\in\mathcal{C}_{u}(\Omega,\FV_{\kappa}')$.
As a consequence $(S(u)(z_{n})-S(u)(x_{n}))$ converges to 
$0$ in $E$ since $u$ is uniformly continuous and $u(\delta_{z_{n}}-\delta_{x_{n}})=S(u)(z_{n})-S(u)(x_{n})$. 
Hence we conclude that $S(u)\in\mathcal{C}_{u}(\Omega,E)$.
\end{proof}

For the next lemma we mean by $\mathcal{C}_{bu}(\Omega)$ 
the space of scalar-valued bounded, uniformly continuous functions 
equipped with the topology of uniform convergence on a metric space $\Omega$.

\begin{lem}\label{lem:bier4}
 Let $(\Omega,\d)$ be metric, $\FV$ a $\dom$-space
 and $T\in L(\FV,\mathcal{C}_{bu}(\Omega))$.
 Then $\delta\circ T\in\mathcal{C}_{u}(\Omega,\FV_{\gamma}')$.
\end{lem}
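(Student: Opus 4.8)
The plan is to mirror the proof of \prettyref{lem:bier3}, but with the Cauchy-sequence bookkeeping replaced by the sequential characterisation of uniform continuity into the locally convex space $\FV_{\gamma}'$. Recall that a map from a metric space into a topological vector space is uniformly continuous precisely when, for any two sequences $(z_{n})$ and $(x_{n})$ with $\d(z_{n},x_{n})\to 0$, the difference of the images tends to $0$ in the target; this is the property I would verify. (The nontrivial implication, that the sequential property forces uniform continuity, follows by the usual contrapositive: a failure of uniform continuity produces, via $\d<1/n$, sequences with $\d(z_{n},x_{n})\to 0$ whose image differences stay outside a fixed $0$-neighbourhood.)

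First I would record well-definedness together with a global equicontinuity bound. Since $\mathcal{C}_{bu}(\Omega)$ carries the sup-norm topology and $T\in L(\FV,\mathcal{C}_{bu}(\Omega))$, there are $j\in J$, $m\in M$ and $C>0$ with
\[
\sup_{x\in\Omega}|(\delta_{x}\circ T)(f)|=\sup_{x\in\Omega}|T(f)(x)|\leq C|f|_{j,m},\quad f\in\FV.
\]
This single estimate shows at once that every $\delta_{x}\circ T$ lies in $\FV'$, so $\delta\circ T\colon\Omega\to\FV_{\gamma}'$ is well-defined, and that the whole set $H:=\{\delta_{x}\circ T\;|\;x\in\Omega\}$ is equicontinuous in $\FV'$. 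Its absolutely convex hull $\acx(H)$ is then equicontinuous as well, so every difference $\delta_{z}\circ T-\delta_{x}\circ T$ lies in the one fixed equicontinuous set $2\,\acx(H)$.

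The decisive step is the reduction of convergence in $\FV_{\gamma}'$ to convergence in $\FV_{\sigma}'$ on this equicontinuous set, exactly as in \prettyref{lem:bier3}: because $\gamma(\FV',\FV)$ and $\sigma(\FV',\FV)$ coincide on equicontinuous subsets of $\FV'$, a sequence staying in $2\,\acx(H)$ converges to $0$ for $\gamma$ as soon as it does for $\sigma$. Now take sequences $(z_{n})$, $(x_{n})$ in $\Omega$ with $\d(z_{n},x_{n})\to 0$. For each fixed $f\in\FV$ the function $T(f)$ is uniformly continuous on $\Omega$, whence
\[
\bigl(\delta_{z_{n}}\circ T-\delta_{x_{n}}\circ T\bigr)(f)=T(f)(z_{n})-T(f)(x_{n})\longrightarrow 0,
\]
so $\delta_{z_{n}}\circ T-\delta_{x_{n}}\circ T\to 0$ in $\FV_{\sigma}'$, hence in $\FV_{\gamma}'$. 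By the sequential characterisation this is precisely uniform continuity, giving $\delta\circ T\in\mathcal{C}_{u}(\Omega,\FV_{\gamma}')$.

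The main, and essentially only non-formal, point is to secure a \emph{single} equicontinuous set containing all relevant differences, so that the coincidence of $\gamma(\FV',\FV)$ and $\sigma(\FV',\FV)$ can be invoked uniformly rather than pointwise. This hinges on $T$ taking values in $\mathcal{C}_{bu}(\Omega)$ with its \emph{global} sup-norm topology, which is what produces the estimate above with a bound independent of $x\in\Omega$; had $T$ only been continuous into $\mathcal{CW}(\Omega)$ (uniform convergence on compacta), one would at best obtain equicontinuity on compact pieces and the uniform-continuity conclusion would fail. Everything else is the routine verification already carried out for Cauchy continuity in \prettyref{lem:bier3}.
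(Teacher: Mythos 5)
Your proof is correct and follows essentially the same route as the paper's: pointwise convergence of the differences $\delta_{z_{n}}\circ T-\delta_{x_{n}}\circ T$ from the uniform continuity of each $T(f)$, a global equicontinuity bound coming from the sup-norm topology of $\mathcal{C}_{bu}(\Omega)$, and the coincidence of $\sigma(\FV',\FV)$ and $\gamma(\FV',\FV)$ on equicontinuous subsets, combined with the sequential characterisation of uniform continuity. The only cosmetic difference is that the paper bounds the set of differences directly by $2C|f|_{j,m}$ instead of packaging them into $2\acx(H)$.
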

\begin{proof}
Let $(z_{n})$ and $(x_{n})$ be sequences in $\Omega$ such that $\lim_{n\to\infty}\d(z_{n},x_{n})=0$.
We have
  \[
  (\delta_{z_{n}}\circ T-\delta_{x_{n}}\circ T)(f)=T(f)(z_{n})-T(f)(x_{n})
  \]
for every $f\in\FV$ which implies that 
$(\delta_{z_{n}}\circ T-\delta_{x_{n}}\circ T)(f)$ converges 
to $0$ in $\K$ for every $f\in\FV$ because $T(f)\in\mathcal{C}_{u}(\Omega)$.
There exist $j\in J$, $m\in M$ and $C>0$ such that
 \[
   \sup_{n\in\N}|(\delta_{z_{n}}\circ T-\delta_{x_{n}}\circ T)(f)|
   \leq 2\sup_{x\in \Omega}|T(f)(x)|
   \leq 2C|f|_{j,m}
 \]
for every $f\in\FV$. Therefore the set 
$\{\delta_{z_{n}}\circ T-\delta_{x_{n}}\circ T\;|\;n\in\N\}$ is 
equicontinuous in $\FV'$ and
we conclude the statement like before.
\end{proof}

Let us turn to continuous extensions. Let $X$ be a metric space and $\Omega\subset X$. We write 
$\mathcal{C}^{ext}(\Omega,E)$ for the space of functions $f\in\mathcal{C}(\Omega,E)$ 
which have a continuous extension to $\overline{\Omega}$ and set 
$\mathcal{C}^{ext}(\Omega):=\mathcal{C}^{ext}(\Omega,\K)$.

\begin{prop}[{continuous extendability}]\label{prop:cont_ext}
Let $X$ be a metric space, $\Omega\subset X$ and $\FV$ a $\dom$-space such that 
$\FV\subset\mathcal{C}^{ext}(\Omega)$ as a linear subspace. 
Then $S(u)\in\mathcal{C}^{ext}(\Omega,E)$ for all $u\in\FV\varepsilon E$ if 
$\delta\in\mathcal{C}^{ext}(\Omega,\FV_{\kappa}')$.
\end{prop}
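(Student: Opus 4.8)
The plan is to follow the proof of \prettyref{prop:stetig.cons}, exploiting the factorisation $S(u)=u\circ\delta$ together with the continuity and linearity of $u\colon\FV_{\kappa}'\to E$. Unlike the Cauchy- and uniform-continuity cases, no sequences are needed: the argument reduces to composing two continuous maps and checking that the composition extends $S(u)$.

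First I would fix $u\in\FV\varepsilon E$. The hypothesis $\delta\in\mathcal{C}^{ext}(\Omega,\FV_{\kappa}')$ entails in particular that $\delta\colon\Omega\to\FV_{\kappa}'$ is continuous, so $S(u)=u\circ\delta$ is continuous on $\Omega$ and hence $S(u)\in\mathcal{C}(\Omega,E)$. Moreover, by the very definition of $\mathcal{C}^{ext}$ there is a continuous extension $\widetilde{\delta}\colon\overline{\Omega}\to\FV_{\kappa}'$ of $\delta$.

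The key step is to set $g:=u\circ\widetilde{\delta}\colon\overline{\Omega}\to E$. As the composition of the continuous map $\widetilde{\delta}$ with the continuous linear operator $u$, the map $g$ is continuous on $\overline{\Omega}$. For $x\in\Omega$ we have $\widetilde{\delta}(x)=\delta_{x}$, so that $g(x)=u(\delta_{x})=S(u)(x)$; thus $g$ restricts to $S(u)$ on $\Omega$ and is therefore a continuous extension of $S(u)$ to $\overline{\Omega}$. This gives $S(u)\in\mathcal{C}^{ext}(\Omega,E)$, as desired.

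I expect no genuine obstacle here. The only point worth noting is that the extension $\widetilde{\delta}$ takes values in $\FV_{\kappa}'$ endowed with the $\kappa$-topology, which is precisely the domain on which $u$ is continuous; hence the composition $u\circ\widetilde{\delta}$ is legitimate and the whole argument is purely formal.
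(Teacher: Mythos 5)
Your proof is correct and is essentially identical to the paper's: both take the continuous extension $\delta^{ext}\colon\overline{\Omega}\to\FV_{\kappa}'$ guaranteed by the hypothesis, compose it with the continuous linear map $u$, and observe that $u\circ\delta^{ext}$ is a continuous extension of $S(u)=u\circ\delta$ to $\overline{\Omega}$. No gap; nothing further is needed.
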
 
\begin{proof}
Let $u\in\FV\varepsilon E$. Since $\delta\in\mathcal{C}^{ext}(\Omega,\FV_{\kappa}')$, 
there is $\delta^{ext}\in\mathcal{C}(\overline{\Omega},\FV_{\kappa}')$ such that 
$\delta^{ext}=\delta$ on $\Omega$. 
Moreover, $u\circ\delta^{ext}\in\mathcal{C}(\overline{\Omega},E)$ and equal to $S(u)=u\circ\delta$ on $\Omega$
yielding $S(u)\in\mathcal{C}^{ext}(\Omega,E)$.
\end{proof}

For the next lemma we equip $\mathcal{C}^{ext}(\Omega)$ with the topology of uniform convergence on 
compact subsets of $\Omega$.

\begin{lem}\label{lem:cont_ext}
Let $X$ be a metric space, $\Omega\subset X$, $\FV$ a $\dom$-space and
$T\in L(\FV,\mathcal{C}^{ext}(\Omega))$. 
Then $\delta\circ T\in\mathcal{C}^{ext}(\Omega,\FV_{\gamma}')$ if 
$\FV$ is barrelled.
\end{lem}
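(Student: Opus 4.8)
The plan is to build the continuous extension of $\delta\circ T$ to $\overline{\Omega}$ out of the scalar extensions of the functions $T(f)$, using barrelledness precisely at the point where the continuity of $T$ alone sufficed in \prettyref{lem:bier}, \prettyref{lem:bier3} and \prettyref{lem:bier4}. Since $T\in L(\FV,\mathcal{C}^{ext}(\Omega))$, each $T(f)$, $f\in\FV$, has a continuous extension $T(f)^{ext}\colon\overline{\Omega}\to\K$, and this extension is unique because $\Omega$ is dense in the metric space $\overline{\Omega}$. For $\overline{x}\in\overline{\Omega}$ I would set $T_{\overline{x}}(f):=T(f)^{ext}(\overline{x})$; uniqueness of continuous extensions together with the linearity of $T$ makes $T_{\overline{x}}$ linear in $f$, so the candidate extension is the map $\overline{x}\mapsto T_{\overline{x}}$. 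It restricts to $\delta\circ T$ on $\Omega$, since there $T(f)^{ext}(x)=T(f)(x)=(\delta_{x}\circ T)(f)$.

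First I would check that $T_{\overline{x}}\in\FV'$ for every $\overline{x}\in\overline{\Omega}$. As $\overline{\Omega}$ is metric, I may pick a sequence $(x_{n})$ in $\Omega$ with $x_{n}\to\overline{x}$; each singleton $\{x_{n}\}$ is compact in $\Omega$, whence $\delta_{x_{n}}\circ T\in\FV'$, and by continuity of $T(f)^{ext}$ one has $(\delta_{x_{n}}\circ T)(f)=T(f)(x_{n})\to T_{\overline{x}}(f)$ for every $f\in\FV$. Thus $\{\delta_{x_{n}}\circ T\mid n\in\N\}$ is $\sigma(\FV',\FV)$-bounded, hence equicontinuous because $\FV$ is barrelled; its pointwise limit $T_{\overline{x}}$ therefore lies in $\FV'$, and since $\sigma(\FV',\FV)$ and $\gamma(\FV',\FV)$ agree on equicontinuous sets, $\delta_{x_{n}}\circ T\to T_{\overline{x}}$ even in $\FV_{\gamma}'$. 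For the continuity of $\overline{x}\mapsto T_{\overline{x}}$ into $\FV_{\gamma}'$ it suffices, again by metrisability of $\overline{\Omega}$, to treat sequences: if $\overline{x}_{n}\to\overline{x}$ in $\overline{\Omega}$, then $T_{\overline{x}_{n}}(f)=T(f)^{ext}(\overline{x}_{n})\to T(f)^{ext}(\overline{x})=T_{\overline{x}}(f)$ for every $f$, so $\{T_{\overline{x}_{n}}\mid n\in\N\}$ is again $\sigma(\FV',\FV)$-bounded, hence equicontinuous by barrelledness, and the pointwise convergence upgrades to convergence in $\FV_{\gamma}'$. Restricting the continuous map $\overline{x}\mapsto T_{\overline{x}}$ to $\Omega$ then yields $\delta\circ T\in\mathcal{C}^{ext}(\Omega,\FV_{\gamma}')$.

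The hard part is exactly that $\overline{x}$ may be a boundary point, so the approximating sequence $(x_{n})$ need not be relatively compact in $\Omega$; consequently the continuity of $T$ into $\mathcal{C}^{ext}(\Omega)$, which only controls uniform convergence on compact subsets of $\Omega$, does not by itself produce the equicontinuity that came for free in \prettyref{lem:bier}, \prettyref{lem:bier3} and \prettyref{lem:bier4}. Barrelledness, through the Banach-Steinhaus theorem, is what recovers this equicontinuity from mere pointwise boundedness, both when verifying $T_{\overline{x}}\in\FV'$ and when establishing the continuity of the extension.
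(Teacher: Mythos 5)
Your proof is correct, and it is organised differently from the paper's. The paper splits the argument in two: it first cites \prettyref{lem:bier} (i) to get $\delta\circ T\in\mathcal{C}(\Omega,\FV_{\gamma}')$ (this step needs only equicontinuity of $\{\delta_{x}\circ T\;|\;x\in K\}$ for compact $K\subset\Omega$, not barrelledness), and then invokes the Banach--Steinhaus theorem only at boundary points $x\in\partial\Omega$ to show that the pointwise limit $\delta_{x}^{ext}\circ T$ of $(\delta_{x_{n}}\circ T)$ lies in $\FV'$ and that the convergence holds in $\FV_{\gamma}'$; the continuity of the resulting extension on all of $\overline{\Omega}$ (i.e.\ along sequences that may themselves contain boundary points) is left implicit there. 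You instead construct the extension $\overline{x}\mapsto T_{\overline{x}}$ globally from the scalar extensions $T(f)^{ext}$ and prove its continuity on $\overline{\Omega}$ directly, using metrisability of $\overline{\Omega}$ to reduce to sequences and applying the same Banach--Steinhaus/equicontinuity argument twice --- once to get $T_{\overline{x}}\in\FV'$, once to upgrade pointwise convergence $T_{\overline{x}_{n}}\to T_{\overline{x}}$ to convergence in $\FV_{\gamma}'$. The underlying mechanism is identical (barrelledness turns pointwise boundedness into equicontinuity, and $\sigma(\FV',\FV)$ and $\gamma(\FV',\FV)$ coincide on equicontinuous sets), but your version is self-contained (it never uses \prettyref{lem:bier}) and is actually more explicit than the paper on the one point the paper glosses over, namely that the extended map is continuous at boundary points and not merely well defined there.
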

\begin{proof}
From \prettyref{lem:bier} (i) we derive that $\delta\circ T\in\mathcal{C}(\Omega,\FV_{\gamma}')$.
Let $x\in\partial\Omega$ and $(x_{n})$ a sequence in $\Omega$ with $x_{n}\to x$.
Then $(\delta_{x_{n}}\circ T)$ is a sequence in $\FV'$ and 
\[
 \lim_{n\to\infty}(\delta_{x_{n}}\circ T)(f)=\lim_{n\to\infty}T(f)(x_{n})=:(\delta_{x}^{ext}\circ T)(f)
\]
in $\K$ for every $f\in\FV$ which implies that $(\delta_{x_{n}}\circ T)$ converges to 
$\delta^{ext}_{x}\circ T$ pointwise in $f$ because 
$T(f)\in\mathcal{C}^{ext}(\Omega)$.
As a consequence of the Banach-Steinhaus theorem 
we get $(\delta^{ext}_{x}\circ T)\in\FV'$ and
the convergence in $\FV_{\gamma}'$.
\end{proof}

Let us turn to continuous differentiability and the postponed but not abandoned proof of consistency 
from \prettyref{ex:weighted_diff}.

\begin{prop}[{differentiability}]\label{prop:diffb.fam}
Let $\Omega\subset\R^{d}$ be open and $\FV$ a $\dom$-space such that 
$\FV\subset\mathcal{C}^{1}(\Omega)$ as a linear subspace. 
Then $S(u)\in\mathcal{C}^{1}(\Omega,E)$ and 
\[
(\partial^{e_{n}})^{E}S(u)(x)=u(\delta_{x}\circ(\partial^{e_{n}})^{\K}),\quad x\in\Omega,\;1\leq n\leq d,
\]
for all $u\in\FV\varepsilon E$ if $\delta\in\mathcal{C}^{1}(\Omega,\FV_{\kappa}')$. 
\end{prop}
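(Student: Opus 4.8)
The plan is to exploit the factorisation $S(u)=u\circ\delta$, where $\delta\colon\Omega\to\FV_{\kappa}'$, $x\mapsto\delta_{x}$, and $u\in L(\FV_{\kappa}',E)$ is linear and continuous, and to push the differentiability of $\delta$ through $u$ by a chain-rule argument. First I would fix $u\in\FV\varepsilon E$, a point $x\in\Omega$ and a direction $e_{n}$, and form the difference quotient. By linearity of $u$,
\[
\frac{S(u)(x+he_{n})-S(u)(x)}{h}=u\Bigl(\frac{\delta_{x+he_{n}}-\delta_{x}}{h}\Bigr)
\]
for all small $h\neq 0$ with $x+he_{n}\in\Omega$. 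The hypothesis $\delta\in\mathcal{C}^{1}(\Omega,\FV_{\kappa}')$ says precisely that $\tfrac{1}{h}(\delta_{x+he_{n}}-\delta_{x})$ converges in $\FV_{\kappa}'$ to $(\partial^{e_{n}})^{\FV_{\kappa}'}\delta(x)$ as $h\to 0$, so the continuity of $u$ yields that $(\partial^{e_{n}})^{E}S(u)(x)$ exists and equals $u\bigl((\partial^{e_{n}})^{\FV_{\kappa}'}\delta(x)\bigr)$.

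The key step is then to identify $(\partial^{e_{n}})^{\FV_{\kappa}'}\delta(x)$ with $\delta_{x}\circ(\partial^{e_{n}})^{\K}$ as elements of $\FV'$. For this I would test against an arbitrary $f\in\FV$: since the evaluation map $\FV_{\kappa}'\to\K$, $f'\mapsto f'(f)$, is continuous (the topology $\kappa(\FV',\FV)$ being finer than $\sigma(\FV',\FV)$), passing to the limit gives
\[
\bigl((\partial^{e_{n}})^{\FV_{\kappa}'}\delta(x)\bigr)(f)=\lim_{h\to 0}\frac{f(x+he_{n})-f(x)}{h}=(\partial^{e_{n}})^{\K}f(x)=\bigl(\delta_{x}\circ(\partial^{e_{n}})^{\K}\bigr)(f),
\]
where the middle equality uses $\FV\subset\mathcal{C}^{1}(\Omega)$. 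As this holds for every $f\in\FV$, the two functionals coincide, which simultaneously shows $\delta_{x}\circ(\partial^{e_{n}})^{\K}\in\FV'$ and establishes the asserted formula $(\partial^{e_{n}})^{E}S(u)(x)=u(\delta_{x}\circ(\partial^{e_{n}})^{\K})$.

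It remains to supply the regularity demanded by membership in $\mathcal{C}^{1}(\Omega,E)$. Continuity of the partial derivative follows from the representation $(\partial^{e_{n}})^{E}S(u)=u\circ\bigl((\partial^{e_{n}})^{\FV_{\kappa}'}\delta\bigr)$ as a composition of the map $(\partial^{e_{n}})^{\FV_{\kappa}'}\delta\colon\Omega\to\FV_{\kappa}'$, which is continuous because $\delta$ is $\mathcal{C}^{1}$, with the continuous operator $u$. Moreover $\delta$, being $\mathcal{C}^{1}$, is in particular continuous, so $S(u)=u\circ\delta\in\mathcal{C}(\Omega,E)$, e.g.\ by \prettyref{prop:stetig.cons}. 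Together these give $S(u)\in\mathcal{C}^{1}(\Omega,E)$.

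I do not expect a serious obstacle: the only point requiring genuine care is the interchange of the limit with $u$, and this is exactly licensed by the continuity of $u$ on $\FV_{\kappa}'$ combined with convergence of the difference quotients \emph{in $\FV_{\kappa}'$} (and not merely weakly), which is precisely the content of the hypothesis $\delta\in\mathcal{C}^{1}(\Omega,\FV_{\kappa}')$. The identification of the limit functional with $\delta_{x}\circ(\partial^{e_{n}})^{\K}$ is then forced by the uniqueness of limits, obtained by pairing with the separating family of evaluations $f'\mapsto f'(f)$.
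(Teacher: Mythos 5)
Your proof is correct and follows essentially the same route as the paper's: both factorise $S(u)=u\circ\delta$, push the difference quotient through $u$ by linearity and continuity, identify $(\partial^{e_{n}})^{\FV_{\kappa}'}(\delta)(x)$ with $\delta_{x}\circ(\partial^{e_{n}})^{\K}$, and obtain continuity of the derivative from the composition $u\circ(\partial^{e_{n}})^{\FV_{\kappa}'}(\delta)$ together with \prettyref{prop:stetig.cons} for $S(u)$ itself. The only difference is cosmetic: you spell out the weak-$*$ testing argument that identifies the limit functional, which the paper leaves implicit.
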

\begin{proof}
Let $u\in\FV\varepsilon E$, $x\in\Omega$ and $1\leq n\leq d$. Then $S(u)\in\mathcal{C}(\Omega,E)$ by 
\prettyref{prop:stetig.cons}. Further, 
\[
(\partial^{e_{n}})^{\FV_{\kappa}'}(\delta)(x)=\lim_{h\to 0}\frac{\delta_{x+he_{n}}-\delta_{x}}{h}
=\delta_{x}\circ (\partial^{e_{n}})^{\K}
\]
in $\FV_{\kappa}'$ since $\delta\in\mathcal{C}^{1}(\Omega,\FV_{\kappa}')$. It follows that
\begin{align*}
u(\delta_{x}\circ (\partial^{e_{n}})^{\K})&=u\bigl((\partial^{e_{n}})^{\FV_{\kappa}'}(\delta)(x)\bigr)
=\lim_{h\to 0}\frac{1}{h}u(\delta_{x+he_{n}}-\delta_{x})\\
&=\lim_{h\to 0}\frac{1}{h}\bigl(S(u)(x+he_{n})-S(u)(x)\bigr)=(\partial^{e_{n}})^{E}S(u)(x).
\end{align*}
In particular, $(\partial^{e_{n}})^{E}S(u)=u\circ(\partial^{e_{n}})^{\FV_{\kappa}'}(\delta)$ yields the 
continuity of $(\partial^{e_{n}})^{E}S(u)$. Therefore we obtain $S(u)\in\mathcal{C}^{1}(\Omega,E)$.
\end{proof}

\begin{lem}\label{lem:diff_cons_barrelled}
Let $\Omega\subset\R^{d}$ be open, $\FV$ a $\dom$-space,
$T\in L(\FV,\mathcal{CW}^{1}(\Omega))$. Then 
$\delta\circ T\in\mathcal{C}^{1}(\Omega,\FV_{\kappa}')$ and 
\[
(\partial^{e_{n}})^{\FV_{\kappa}'}(\delta\circ T)(x)=\lim_{h\to 0}\frac{\delta_{x+he_{n}}\circ T-\delta_{x}\circ T}{h}
=\delta_{x}\circ (\partial^{e_{n}})^{\K}\circ T,\quad x\in\Omega,\;1\leq n\leq d,
\]
if $\FV$ is barrelled.
\end{lem}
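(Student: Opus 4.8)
The plan is to reduce the statement to the already-proved \prettyref{lem:bier}(i) together with a weak*-convergence argument that barrelledness allows us to promote to convergence in $\FV_{\kappa}'$. First I would fix $x\in\Omega$ and $1\le n\le d$ and choose $\varepsilon>0$ with $x+he_{n}\in\Omega$ for all $|h|\le\varepsilon$, which is possible since $\Omega$ is open. For every $f\in\FV$ we have $T(f)\in\mathcal{C}^{1}(\Omega)$, so
\[
\frac{(\delta_{x+he_{n}}\circ T-\delta_{x}\circ T)(f)}{h}
=\frac{T(f)(x+he_{n})-T(f)(x)}{h}\;\xrightarrow[h\to 0]{}\;\bigl(\delta_{x}\circ(\partial^{e_{n}})^{\K}\circ T\bigr)(f).
\]
This is precisely convergence of the difference quotients to the claimed limit functional in the topology $\sigma(\FV',\FV)$.

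The decisive step, and the place where barrelledness is used, is to upgrade this pointwise convergence to convergence in $\FV_{\kappa}'$. I would set $A$ to be the family of all difference quotients $\tfrac{1}{h}(\delta_{x+he_{n}}\circ T-\delta_{x}\circ T)$ with $0<|h|\le\varepsilon$, together with the limit $\delta_{x}\circ(\partial^{e_{n}})^{\K}\circ T$. For each fixed $f$ the scalar map $h\mapsto\tfrac{1}{h}\bigl(T(f)(x+he_{n})-T(f)(x)\bigr)$ extends continuously to $h=0$ by the displayed limit and is continuous for $h\neq0$; hence it is bounded on the compact interval $[-\varepsilon,\varepsilon]$, so $A$ is $\sigma(\FV',\FV)$-bounded. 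Since $\FV$ is barrelled, every weakly bounded subset of $\FV'$ is equicontinuous, whence $A$ is equicontinuous. On equicontinuous subsets the topologies $\sigma(\FV',\FV)$ and $\gamma(\FV',\FV)$ coincide, and as $\sigma(\FV',\FV)\le\kappa(\FV',\FV)\le\gamma(\FV',\FV)$ always holds, all three agree on $A$. Because the difference quotients and their limit all lie in $A$, the $\sigma$-convergence above yields convergence in $\kappa(\FV',\FV)$, so the partial derivative exists in $\FV_{\kappa}'$ and equals $\delta_{x}\circ(\partial^{e_{n}})^{\K}\circ T$, which is the asserted formula. I expect this equicontinuity argument to be the main obstacle: without barrelledness the difference quotients need not converge in $\kappa(\FV',\FV)$, and it is exactly the passage from weak* boundedness to equicontinuity that barrelledness supplies.

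It remains to see that the partial derivative map is continuous into $\FV_{\kappa}'$. By the identity just obtained it equals $\delta\circ\bigl((\partial^{e_{n}})^{\K}\circ T\bigr)$, and the operator $(\partial^{e_{n}})^{\K}\circ T\colon\FV\to\mathcal{CW}(\Omega)$ is continuous, being the composition of $T\in L(\FV,\mathcal{CW}^{1}(\Omega))$ with the continuous map $(\partial^{e_{n}})^{\K}\colon\mathcal{CW}^{1}(\Omega)\to\mathcal{CW}(\Omega)$. Since $\Omega\subset\R^{d}$ is open it is locally compact Hausdorff, hence a $k_{\R}$-space, so \prettyref{lem:bier}(i) gives $\delta\circ\bigl((\partial^{e_{n}})^{\K}\circ T\bigr)\in\mathcal{C}(\Omega,\FV_{\gamma}')$. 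As $\gamma(\FV',\FV)$ is finer than $\kappa(\FV',\FV)$, continuity into $\FV_{\gamma}'$ entails continuity into $\FV_{\kappa}'$; the same reasoning applied to $T$ itself (composed with the inclusion $\mathcal{CW}^{1}(\Omega)\hookrightarrow\mathcal{CW}(\Omega)$) shows $\delta\circ T\in\mathcal{C}(\Omega,\FV_{\kappa}')$. Combining the existence of all partial derivatives with the continuity of each $\delta\circ\bigl((\partial^{e_{n}})^{\K}\circ T\bigr)$ gives $\delta\circ T\in\mathcal{C}^{1}(\Omega,\FV_{\kappa}')$, completing the proof.
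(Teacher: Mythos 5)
Your proposal is correct and follows essentially the same route as the paper's proof: pointwise (weak*) convergence of the difference quotients to $\delta_{x}\circ(\partial^{e_{n}})^{\K}\circ T$, upgraded to convergence in $\FV_{\kappa}'$ via barrelledness, and then \prettyref{lem:bier} (i) applied to $(\partial^{e_{n}})^{\K}\circ T\in L(\FV,\mathcal{CW}(\Omega))$ to get continuity of the derivative and hence $\delta\circ T\in\mathcal{C}^{1}(\Omega,\FV_{\kappa}')$. The only difference is cosmetic: where the paper simply cites the Banach--Steinhaus theorem, you unpack it into the explicit weak*-boundedness/equicontinuity argument on the family of difference quotients.
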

\begin{proof}
Let $x\in\Omega$ and $1\leq n\leq d$. Then there is $\varepsilon>0$ such that $x+he_{n}\in\Omega$ 
for all $h\in\R$ with $0<|h|<\varepsilon$. We note that $\delta\circ T\in\mathcal{C}(\Omega,\FV_{\kappa}')$ 
by \prettyref{lem:bier} (i) which implies $\frac{\delta_{x+he_{n}}\circ T-\delta_{x}\circ T}{h}\in\FV'$. 
For every $f\in\FV$ we have 
\[
\lim_{h\to 0}\frac{\delta_{x+he_{n}}\circ T-\delta_{x}\circ T}{h}(f)
=\lim_{h\to 0}\frac{T(f)(x+he_{n})-T(f)(x)}{h}=(\partial^{e_{n}})^{\K}T(f)(x)
\]
in $\K$ as $T(f)\in\mathcal{C}^{1}(\Omega)$. Therefore $\tfrac{1}{h}(\delta_{x+he_{n}}\circ T-\delta_{x}\circ T)$ 
converges to $\delta_{x}\circ(\partial^{e_{n}})^{\K}\circ T$ in $\FV_{\sigma}'$ and thus in 
$\FV_{\kappa}'$ by the Banach-Steinhaus theorem as well. In particular, we obtain 
\[
\delta_{x}\circ (\partial^{e_{n}})^{\K}\circ T
=\lim_{h\to 0}\frac{\delta_{x+he_{n}}\circ T-\delta_{x}\circ T}{h}
=(\partial^{e_{n}})^{\FV_{\kappa}'}(\delta\circ T)(x)
\]
in $\FV_{\kappa}'$. Moreover, $\delta\circ(\partial^{e_{n}})^{\K}\circ T\in\mathcal{C}(\Omega,\FV_{\kappa}')$ 
by \prettyref{lem:bier} (i) as $(\partial^{e_{n}})^{\K}\circ T\in L(\FV,\mathcal{CW}(\Omega))$. 
Hence we deduce that $\delta\circ T\in \mathcal{C}^{1}(\Omega,\FV_{\kappa}')$.
\end{proof}

\begin{prop}\label{prop:diff_cons_barrelled}
Let $\Omega\subset\R^{d}$ be open, $k\in\N_{\infty}$, $\FV$ a $\dom$-space 
and the inclusion $I\colon\FV\to\mathcal{CW}^{k}(\Omega)$, $f\mapsto f$, be continuous. 
Then $S(u)\in\mathcal{C}^{k}(\Omega,E)$ and 
\[
(\partial^{\beta})^{E}S(u)(x)=u(\delta_{x}\circ(\partial^{\beta})^{\K}),\quad \beta\in\N_{0}^{d},\;|\beta|\leq k,\; x\in\Omega,
\]
for all $u\in\FV\varepsilon E$ if $\FV$ is barrelled. 
\end{prop}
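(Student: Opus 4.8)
The plan is to first show that the canonical curve $\delta$ is itself $k$-times continuously differentiable into $\FV_{\kappa}'$ with the expected derivatives, and then to obtain the claim for $S(u)=u\circ\delta$ by pushing these derivatives through the continuous linear map $u$, exactly as in the proof of \prettyref{prop:diffb.fam}. The technical heart is a $k$-th order strengthening of \prettyref{lem:diff_cons_barrelled}, which I would isolate as an auxiliary claim: \emph{if $\FV$ is barrelled and $T\in L(\FV,\mathcal{CW}^{k}(\Omega))$, then $\delta\circ T\in\mathcal{C}^{k}(\Omega,\FV_{\kappa}')$ and $(\partial^{\beta})^{\FV_{\kappa}'}(\delta\circ T)(x)=\delta_{x}\circ(\partial^{\beta})^{\K}\circ T$ for all $\beta\in\N_{0}^{d}$ with $|\beta|\leq k$ and $x\in\Omega$.}

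I would prove this claim by induction on $k\in\N$. The case $k=1$ is precisely \prettyref{lem:diff_cons_barrelled}. For the step $k\to k+1$, let $T\in L(\FV,\mathcal{CW}^{k+1}(\Omega))$; since $\mathcal{CW}^{k+1}(\Omega)$ embeds continuously into $\mathcal{CW}^{1}(\Omega)$, we have $T\in L(\FV,\mathcal{CW}^{1}(\Omega))$, so \prettyref{lem:diff_cons_barrelled} gives $\delta\circ T\in\mathcal{C}^{1}(\Omega,\FV_{\kappa}')$ together with $(\partial^{e_{n}})^{\FV_{\kappa}'}(\delta\circ T)=\delta\circ(\partial^{e_{n}})^{\K}\circ T$ for $1\leq n\leq d$. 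As $(\partial^{e_{n}})^{\K}\circ T\in L(\FV,\mathcal{CW}^{k}(\Omega))$, the induction hypothesis applies to it and shows that each of these first partials lies in $\mathcal{C}^{k}(\Omega,\FV_{\kappa}')$; hence $\delta\circ T\in\mathcal{C}^{k+1}(\Omega,\FV_{\kappa}')$. The derivative formula for $|\beta|\leq k+1$ then follows by writing $\beta=e_{n}+\gamma$ with $|\gamma|\leq k$, applying the induction hypothesis to $(\partial^{e_{n}})^{\K}\circ T$, and using the symmetry of the mixed partials of a scalar $\mathcal{C}^{k+1}$-function to identify $(\partial^{\gamma})^{\K}(\partial^{e_{n}})^{\K}$ with $(\partial^{\beta})^{\K}$.

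Applying the auxiliary claim to $T:=I$, which by hypothesis belongs to $L(\FV,\mathcal{CW}^{k}(\Omega))$, yields $\delta=\delta\circ I\in\mathcal{C}^{k}(\Omega,\FV_{\kappa}')$ with $(\partial^{\beta})^{\FV_{\kappa}'}\delta(x)=\delta_{x}\circ(\partial^{\beta})^{\K}$ for every $|\beta|\leq k$. Since $u\in\FV\varepsilon E=L(\FV_{\kappa}',E)$ is linear and continuous, the difference-quotient computation from the proof of \prettyref{prop:diffb.fam}, applied inductively to $\delta$ and its already-established higher derivatives, shows that $S(u)=u\circ\delta$ is $k$-times continuously differentiable and that $(\partial^{\beta})^{E}S(u)(x)=u\bigl((\partial^{\beta})^{\FV_{\kappa}'}\delta(x)\bigr)=u(\delta_{x}\circ(\partial^{\beta})^{\K})$ for all $|\beta|\leq k$ and $x\in\Omega$, as asserted. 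The case $k=\infty$ is immediate: continuity of $I$ into $\mathcal{CW}^{\infty}(\Omega)$ gives continuity into every $\mathcal{CW}^{k}(\Omega)$, so $S(u)\in\mathcal{C}^{k}(\Omega,E)$ for all finite $k$ and hence $S(u)\in\mathcal{C}^{\infty}(\Omega,E)$.

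I expect the main obstacle to be the bookkeeping of the inductive step rather than any deep difficulty: one must check that differentiating $\delta\circ T$ once keeps us inside the same scheme with $T$ replaced by $(\partial^{e_{n}})^{\K}\circ T$ (which loses exactly one order of smoothness and still maps continuously into $\mathcal{CW}^{k}(\Omega)$), and one must invoke the symmetry of mixed partials to match the iterated derivatives with $(\partial^{\beta})^{\K}$. Barrelledness is used only inside \prettyref{lem:diff_cons_barrelled}, where Banach--Steinhaus upgrades the convergence of the difference quotients from $\FV_{\sigma}'$ to $\FV_{\kappa}'$; notably no completeness assumption on $E$ is required, since the differentiability of $S(u)$ is tested only against the curve $\delta$, which already lives and differentiates in $\FV_{\kappa}'$.
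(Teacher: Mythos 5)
Your proof is correct, and its engine is the same as the paper's: iterated use of Lemma \ref{lem:diff_cons_barrelled} (the only place where barrelledness, via Banach--Steinhaus, enters) combined with the difference-quotient computation of Proposition \ref{prop:diffb.fam}. The decomposition, however, is different. The paper runs a single induction on the derivative order of the target statement itself: assuming $S(u)\in\mathcal{C}^{m}(\Omega,E)$ and $(\partial^{\widetilde{\beta}})^{E}S(u)(x)=u(\delta_{x}\circ(\partial^{\widetilde{\beta}})^{\K})$ for $|\widetilde{\beta}|\leq m$, it applies Lemma \ref{lem:diff_cons_barrelled} with $T=(\partial^{\widetilde{\beta}})^{\K}\in L(\FV,\mathcal{CW}^{1}(\Omega))$ and uses the linearity and continuity of $u$ inside each induction step to produce the next $E$-valued derivative of $S(u)$, so $u$ is carried along throughout. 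You instead isolate a $u$-free key lemma --- $\delta\circ T\in\mathcal{C}^{k}(\Omega,\FV_{\kappa}')$ with $(\partial^{\beta})^{\FV_{\kappa}'}(\delta\circ T)=\delta\circ(\partial^{\beta})^{\K}\circ T$, proved by induction on $k$ with $T$ universally quantified over $L(\FV,\mathcal{CW}^{k}(\Omega))$ --- and compose with $u$ only once at the very end. Your route buys a reusable intermediate statement that the paper never establishes beyond first order (higher-order smoothness of the evaluation curve $\delta$ itself) and a clean separation of the two mechanisms at work; the paper's route buys slightly less bookkeeping, since the iterated statement is exactly the target and no general ``composition with a continuous linear map preserves $\mathcal{C}^{k}$'' step is needed. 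One small remark: your appeal to Schwarz symmetry is avoidable. If in $\beta=e_{n}+\gamma$ you choose $n$ so that $(\partial^{e_{n}})$ occupies the innermost position in the paper's definitional ordering of $(\partial^{\beta})$, then $(\partial^{\gamma})\circ(\partial^{e_{n}})=(\partial^{\beta})$ holds by definition on both the scalar and the $\FV_{\kappa}'$-valued side; this mirrors the choice the paper makes implicitly (there with $(\partial^{e_{n}})$ outermost), and it also spares you any discussion of commuting mixed partials for $\FV_{\kappa}'$-valued functions, which your sketch would otherwise need in order to match $(\partial^{\gamma})^{\FV_{\kappa}'}(\partial^{e_{n}})^{\FV_{\kappa}'}(\delta\circ T)$ with the definitionally ordered $(\partial^{\beta})^{\FV_{\kappa}'}(\delta\circ T)$.
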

\begin{proof}
We prove our claim by induction over the order of differentiation. 
Let $u\in\FV\varepsilon E$. For $\beta\in\N_{0}^{d}$ 
with $|\beta|=0$ we get $S(u)\in\mathcal{C}(\Omega,E)$ 
from \prettyref{prop:stetig.cons} and \prettyref{lem:bier} (i) with $T=I$. Further, 
\[
(\partial^{\beta})^{E}S(u)(x)=S(u)(x)=u(\delta_{x})=u(\delta_{x}\circ(\partial^{\beta})^{\K}),\quad x\in\Omega.
\]
Let $m\in\N_{0}$, $m\leq k$, such that $S(u)\in\mathcal{C}^{m}(\Omega,E)$ and 
\begin{equation}\label{eq:diff_induction}
(\partial^{\beta})^{E}S(u)(x)=u(\delta_{x}\circ(\partial^{\beta})^{\K}),\quad x\in\Omega,
\end{equation}
for all $\beta\in\N_{0}^{d}$ with $|\beta|\leq m$. 
Let $\beta\in\N_{0}^{d}$ with $|\beta|=m+1\leq k$. Then there is $1\leq n\leq d$ and 
$\widetilde{\beta}\in\N_{0}^{d}$ with $|\widetilde{\beta}|=m$ such 
that $\beta=e_{n}+\widetilde{\beta}$.
The barrelledness of $\FV$ yields that 
$\tfrac{1}{h}(\delta_{x+he_{n}}\circ(\partial^{\widetilde{\beta}})^{\K}
-\delta_{x}\circ(\partial^{\widetilde{\beta}})^{\K})$ converges to 
$\delta_{x}\circ (\partial^{e_{n}})^{\K}\circ(\partial^{\widetilde{\beta}})^{\K}$ 
in $\FV_{\kappa}'$ for every $x\in\Omega$ by \prettyref{lem:diff_cons_barrelled} 
with $T=(\partial^{\widetilde{\beta}})^{\K}$. 
Therefore we derive from $\delta_{x}\circ(\partial^{e_{n}})^{\K}
\circ(\partial^{\widetilde{\beta}})^{\K}=\delta_{x}\circ(\partial^{\beta})^{\K}$ that
\begin{align*}
u(\delta_{x}\circ(\partial^{\beta})^{\K})
&=\lim_{h\to 0}\frac{1}{h}\bigl(u(\delta_{x+he_{n}}\circ(\partial^{\widetilde{\beta}})^{\K})
-u(\delta_{x}\circ(\partial^{\widetilde{\beta}})^{\K})\bigr)\\
&\underset{\mathclap{\eqref{eq:diff_induction}}}{=}\;\lim_{h\to 0}\frac{1}{h}\bigl((\partial^{\widetilde{\beta}})^{E}S(u)(x+he_{n})
-(\partial^{\widetilde{\beta}})^{E}S(u)(x)\bigr)\\
&=(\partial^{e_{n}})^{E}(\partial^{\widetilde{\beta}})^{E}S(u)(x)
=(\partial^{\beta})^{E}S(u)(x)
\end{align*}
for every $x\in\Omega$. 
Moreover, 
$\delta\circ(\partial^{\beta})^{\K}=(\partial^{e_{n}})^{\FV_{\kappa}'}(\delta\circ T)\in\mathcal{C}(\Omega,\FV_{\kappa}')$ 
for $T=(\partial^{\widetilde{\beta}})^{\K}$ by \prettyref{lem:diff_cons_barrelled}. 
Hence we have $S(u)\in\mathcal{C}^{m+1}(\Omega,E)$.
\end{proof} 

We recall from \prettyref{ex:weighted_smooth_functions_infinity} the subspace $\mathcal{CV}^{k}_{0}(\Omega,E)$ 
of $\mathcal{CV}^{k}(\Omega,E)$ consisting of the functions that vanish with all their derivatives when weighted at infinity.
As a consequence of the preceding result we know that $S(u)\in\mathcal{CV}^{k}(\Omega,E)$ for all 
$u\in\mathcal{CV}^{k}_{0}(\Omega)\varepsilon E$ if $\mathcal{CV}^{k}_{0}(\Omega)$ is barrelled and 
$\mathcal{V}^{k}$ is locally bounded away from zero on $\Omega$. 
For special cases where $\mathcal{CV}^{k}_{0}(\Omega,E)=\mathcal{CV}^{k}(\Omega,E)$, e.g.\ 
the Schwartz space $\mathcal{S}(\R^{d},E)$ from \prettyref{ex:Schwartz}, this already implies 
$S(u)\in\mathcal{CV}^{k}_{0}(\Omega,E)$. 
Our next goal is to show that $S(u)\in\mathcal{CV}^{k}_{0}(\Omega,E)$ holds in general 
for $u\in\mathcal{CV}^{k}_{0}(\Omega)\varepsilon E$ if $\mathcal{CV}^{k}_{0}(\Omega)$ is barrelled and 
$\mathcal{V}^{k}$ is locally bounded away from zero on $\Omega$.

Let $\FVE$ be a $\dom$-space and $(\pi,\mathfrak{K})$ as in \prettyref{lem:FVE_rel_comp}. 
We define the space 
\[
\operatorname{AP}_{\pi,\mathfrak{K}}(\Omega,E):=\{f\in\bigcap_{m\in M}\dom T^{E}_{m}\;|\; 
f\;\text{fulfils}\;\eqref{van.a.inf}\}.
\]

\begin{prop}[{vanishing at $\infty$ w.r.t.\ to $(\pi,\mathfrak{K})$}]\label{prop:van.at.inf0}
If the generator $(T^{E}_{m},T^{\K}_{m})_{m\in M}$ for $(\mathcal{FV},E)$ fulfils condition (ii) of 
\prettyref{def:consist} resp.\ \prettyref{def:strong},  
$\mathcal{FV}(\Omega,Y)\subset\operatorname{AP}_{\pi,\mathfrak{K}}(\Omega,Y)$ as a linear subspace for $Y\in\{\K,E\}$ 
and $\mathfrak{K}$ is closed under taking finite unions, 
then $S(u)\in\operatorname{AP}_{\pi,\mathfrak{K}}(\Omega,E)$ for all $u\in\FV\varepsilon E$ 
resp.\ $e'\circ f\in\operatorname{AP}_{\pi,\mathfrak{K}}(\Omega)$ for all $e'\in E'$ and $f\in\FVE$. 
\end{prop}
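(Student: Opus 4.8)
The plan is to prove the two branches separately, since they draw on different parts of the hypotheses. In both cases the membership in $\bigcap_{m\in M}\dom T^{E}_{m}$ (resp.\ $\bigcap_{m\in M}\dom T^{\K}_{m}$) is immediate from condition (ii) of \prettyref{def:consist} (resp.\ \prettyref{def:strong}), so the real content is to verify that $S(u)$ (resp.\ $e'\circ f$) fulfils \eqref{van.a.inf}.

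I would dispatch the strength branch first, as it is routine. Fix $f\in\FVE$ and $e'\in E'$; by hypothesis $f\in\operatorname{AP}_{\pi,\mathfrak{K}}(\Omega,E)$, so $f$ satisfies \eqref{van.a.inf}. Using condition (ii) of \prettyref{def:strong} I would rewrite $T^{\K}_{m}(e'\circ f)(x)=e'(T^{E}_{m}(f)(x))$ and pick $\alpha\in\mathfrak{A}$, $C>0$ with $|e'(\cdot)|\leq C p_{\alpha}(\cdot)$. Then \eqref{van.a.inf}(i) for $e'\circ f$ follows from \eqref{van.a.inf}(i) for $f$ with $\varepsilon$ replaced by $\varepsilon/C$, and \eqref{van.a.inf}(ii) follows because $N_{\pi\subset K,j,m}(e'\circ f)=e'(N_{\pi\subset K,j,m}(f))$ is the continuous image of a precompact set, hence precompact.

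For the consistency branch, fix $u\in\FV\varepsilon E$ and $j,m,\alpha$. Condition (ii) of \prettyref{def:consist} gives $(T^{E}_{m}S(u))(x)=u(T^{\K}_{m,x})$, and the continuity of $u\in L(\FV_{\kappa}',E)$ produces an absolutely convex compact set $C\subset\FV$ and $c>0$ with $p_{\alpha}(u(y))\leq c\sup_{g\in C}|y(g)|$ for all $y\in\FV'$. Testing this at $y=\nu_{j,m}(x)T^{\K}_{m,x}$ yields the pointwise bound $p_{\alpha}(T^{E}_{m}S(u)(x))\nu_{j,m}(x)\leq c\sup_{g\in C}|T^{\K}_{m}(g)(x)|\nu_{j,m}(x)$. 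For \eqref{van.a.inf}(ii) I would note that $N_{\pi\subset K,j,m}(S(u))=u(D)$ with $D:=\{\nu_{j,m}(x)T^{\K}_{m,x}\mid x\in\omega_{m},\,\pi(x)\in K\}\subset B_{j,m}^{\circ}$; as in the proof of \prettyref{lem:topology_eps}, $B_{j,m}^{\circ}$ is equicontinuous, hence $\sigma(\FV',\FV)$-compact by Alao\u{g}lu--Bourbaki and $\kappa(\FV',\FV)$-compact since the two topologies agree on equicontinuous sets, so $D$ is relatively compact in $\FV_{\kappa}'$ and its continuous image $u(D)$ is precompact in $E$.

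The main obstacle is \eqref{van.a.inf}(i) for $S(u)$: although every scalar $g\in\FV\subset\operatorname{AP}_{\pi,\mathfrak{K}}(\Omega,\K)$ vanishes at infinity, I need this vanishing uniformly over the compact set $C$. The plan is to introduce the tail seminorms $\Phi_{K}(g):=\sup_{x\in\omega_{m},\,\pi(x)\notin K}|T^{\K}_{m}(g)(x)|\nu_{j,m}(x)$, which are continuous because $\Phi_{K}\leq|\cdot|_{j,m}$ and which, since $\mathfrak{K}$ is closed under finite unions, form a net decreasing in $K$ that converges pointwise to $0$ on $\FV$. Dini's theorem for nets on the compact set $C$ then upgrades this to $\sup_{g\in C}\Phi_{K}(g)\to 0$, so I can choose $K\in\mathfrak{K}$ with $\sup_{g\in C}\Phi_{K}(g)<\varepsilon/c$; combined with the pointwise bound this gives $\sup_{x\in\omega_{m},\,\pi(x)\notin K}p_{\alpha}(T^{E}_{m}S(u)(x))\nu_{j,m}(x)<\varepsilon$, completing \eqref{van.a.inf}(i) and hence the proof.
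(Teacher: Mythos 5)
Your proof is correct, and while your strength branch coincides with the paper's (the estimate $|e'|\leq Cp_{\alpha}$ combined with condition (ii) of \prettyref{def:strong}), your consistency branch takes a genuinely different route. The paper restricts $u$ to the polar $B_{j,m}^{\circ}$ of the unit ball of $|\cdot|_{j,m}$: this set is equicontinuous, so $\kappa(\FV',\FV)$ and $\sigma(\FV',\FV)$ coincide on it, hence the restriction of $u$ is weakly continuous and there are a \emph{finite} set $N\subset\FV$ and $\eta>0$ with $u(V_{N,\eta})\subset B_{\alpha,\varepsilon}$; the finite-union hypothesis on $\mathfrak{K}$ is then used to produce a single $K$ such that all tail functionals $T^{\K}_{m,x}(\cdot)\nu_{j,m}(x)$, $\pi(x)\notin K$, lie in $V_{N,\eta}$. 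You instead keep the defining $\kappa$-continuity estimate $p_{\alpha}(u(y))\leq c\sup_{g\in C}|y(g)|$ with $C\subset\FV$ absolutely convex and compact, and upgrade the pointwise decay of the tail seminorms $\Phi_{K}$ (continuous since $\Phi_{K}\leq|\cdot|_{j,m}$, and decreasing in $K$) to uniform decay on $C$ by Dini's theorem for monotone nets; the finite-union hypothesis enters only to direct the net $(\Phi_{K})_{K\in\mathfrak{K}}$. Both arguments are sound and consume exactly the same hypotheses: the paper's equicontinuity device reduces the needed uniformity to finitely many test functions (the standard trick in this circle of ideas, going back to Bierstedt), whereas your compactness-plus-Dini argument works directly with the seminorms of $\FV\varepsilon E$ and makes the role of $\mathfrak{K}$ transparent. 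A further point in your favour: you verify condition (ii) of \eqref{van.a.inf} explicitly --- for $S(u)$ via $N_{\pi\subset K,j,m}(S(u))=u(D)$ with $D\subset B_{j,m}^{\circ}$ relatively compact in $\FV_{\kappa}'$ by Alao\u{g}lu--Bourbaki, and for $e'\circ f$ as a continuous linear image of a precompact set --- whereas the paper's proof only records condition (i) and leaves (ii) implicit.
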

\begin{proof}
First, we consider the claim $S(u)\in\operatorname{AP}_{\pi,\mathfrak{K}}(\Omega,E)$. 
We set $B_{j,m}:=\{f\in\FV\;|\;|f|_{j,m}\leq 1\}$ for $j\in J$ and $m\in M$. 
Let $u\in\FV\varepsilon E$. 
The topologies $\sigma(\FV',\FV)$ and 
$\kappa(\FV',\FV)$ coincide on the equicontinuous set $B^{\circ}_{j,m}$ and we deduce that 
the restriction of $u$ to $B_{j,m}^{\circ}$ is 
$\sigma(\FV',\FV)$-continuous.

Let $\varepsilon>0$, $j\in J$, $m\in M$, $\alpha\in\mathfrak{A}$ and set 
$B_{\alpha,\varepsilon}:=\{x\in E\;|\;p_{\alpha}(x)< \varepsilon\}$. 
Then there are a finite set $N\subset\FV$ and $\eta>0$ such that
$u(f')\in B_{\alpha,\varepsilon}$ for all $f'\in V_{N,\eta}$ where
\[
V_{N,\eta}:=\{f'\in\FV'\;|\; \sup_{f\in N}|f'(f)|<\eta\}\cap B_{j,m}^{\circ}
\]
because the restriction of $u$ to $B_{j,m}^{\circ}$ is 
$\sigma(\FV',\FV)$-continuous. 
Since $N\subset\FV$ is finite, $\FV\subset\operatorname{AP}_{\pi,\mathfrak{K}}(\Omega)$ 
and $\mathfrak{K}$ closed under taking finite unions, 
there is $K\in\mathfrak{K}$ such that
\begin{equation}\label{van.at.inf1}
 \sup_{\substack{x\in \omega_{m}\\ \pi(x)\notin K}}|T^{\K}_{m}(f)(x)|
 \nu_{j,m}(x)<\eta
\end{equation}
for every $f\in N$. It follows from \eqref{van.at.inf1} and (the proof of) \prettyref{lem:topology_eps} b)
that
\[
D_{\pi \nsubset K,j,m}:=\{T^{\K}_{m,x}(\cdot)\nu_{j,m}(x)
\;|\;x\in \omega_{m},\,\pi(x)\notin K\}\subset V_{N,\eta}
\]
and thus $u(D_{\pi \nsubset K,j,m})\subset B_{\alpha,\varepsilon}$. Therefore we have
\[
 \sup_{\substack{x\in\omega_{m}\\ \pi(x)\notin K}}
 p_{\alpha}\bigl(T^{E}_{m}(S(u))(x)\bigr)\nu_{j,m}(x)
=\sup_{\substack{x\in \omega_{m}\\ \pi(x)\notin K}}
 p_{\alpha}\bigl(u(T^{\K}_{m,x})\bigr)\nu_{j,m}(x)
<\varepsilon
\]
if $(T^{E}_{m},T^{\mathbb{K}}_{m})_{m\in M}$ fulfils condition (ii) of \prettyref{def:consist}.
Hence we conclude that $S(u)\in \operatorname{AP}_{\pi,\mathfrak{K}}(\Omega,E)$. 

Now, let $(T^{E}_{m},T^{\K}_{m})_{m\in M}$ fulfil condition (ii) of \prettyref{def:strong}.
Let $\varepsilon>0$, $f\in\FVE$ and $e'\in E'$. 
Then there exist $\alpha\in\mathfrak{A}$ and $C>0$ such that
$|e'(x)|\leq C p_{\alpha}(x)$ for every $x\in E$. 
For $j\in J$ and $m\in M$ there is 
$K\in\mathfrak{K}$ such that 
\[
\sup_{\substack{x\in \omega_{m}\\ \pi(x)\notin K}}p_{\alpha}\bigl(T^{E}_{m}(f)(x)\bigr)
 \nu_{j,m}(x)<\frac{\varepsilon}{C}
\]
since $\FVE\subset\operatorname{AP}_{\pi,\mathfrak{K}}(\Omega,E)$.
Using that $(T^{E}_{m},T^{\K}_{m})_{m\in M}$ fulfils condition (ii) of \prettyref{def:strong}, 
it follows that
\[
 \sup_{\substack{x\in\omega_{m}\\ \pi(x)\notin K}}
 |T^{\K}_{m}(e'\circ f)(x)|\nu_{j,m}(x)
=\sup_{\substack{x\in\omega_{m} \\ \pi(x)\notin K}}
 \bigl|e'\bigl(T^{E}_{m}(f)(x)\bigr)\bigr|\nu_{j,m}(x)
<C\frac{\varepsilon}{C}=\varepsilon
\]
yielding to $e'\circ f\in\operatorname{AP}_{\pi,\mathfrak{K}}(\Omega)$.
\end{proof}

The `consistency'-part of the proof above adapts an idea in the proof of \cite[4.4 Theorem, p.\ 199-200]{B1} 
where $(T^{E}_{m},T^{\K}_{m})_{m\in M}
=(\id_{E^{\Omega}},\id_{\K^{\Omega}})$ which is a special case of our proposition.

Our last two propositions of this section are immediate. 
For a vector space $\Omega$ let $\mathcal{L}(\Omega,E)$ be the space of linear maps $f\colon\Omega\to E$ 
and $\mathcal{L}(\Omega):=\mathcal{L}(\Omega,\K)$. 

\begin{prop}[{linearity}]\label{prop:linearity}
Let $\Omega$ be a vector space and $\FV$ a $\dom$-space such that $\FV\subset\mathcal{L}(\Omega)$ as a linear subspace. 
Then $S(u)\in\mathcal{L}(\Omega,E)$ for all $u\in\FV\varepsilon E$. 
\end{prop}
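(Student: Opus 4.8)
The plan is to exploit the fact that, once every $f\in\FV$ is a linear functional on $\Omega$, the point-evaluation assignment $\delta\colon\Omega\to\FV'$ is itself linear, and then to transport this linearity through $u$. Recall that for $u\in\FV\varepsilon E=L_{e}(\FV_{\kappa}',E)$ the function $S(u)$ is given by $S(u)(x)=u(\delta_{x})$, where $\delta_{x}\in\FV'$ for every $x\in\Omega$ because $\FV$ is a $\dom$-space; in particular $u(\delta_{x})$ is well-defined.

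First I would fix $x,y\in\Omega$ and scalars $a,b\in\K$ and verify the identity $\delta_{ax+by}=a\delta_{x}+b\delta_{y}$ in $\FV'$. This is the only computation required: for an arbitrary $f\in\FV\subset\mathcal{L}(\Omega)$ the linearity of $f$ yields
\[
\delta_{ax+by}(f)=f(ax+by)=af(x)+bf(y)=(a\delta_{x}+b\delta_{y})(f),
\]
and since $f\in\FV$ was arbitrary the two functionals agree on all of $\FV$, hence coincide as elements of $\FV'$.

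Then I would simply invoke the linearity of $u$ as a map on $\FV'$ to conclude
\[
S(u)(ax+by)=u(\delta_{ax+by})=u(a\delta_{x}+b\delta_{y})=a\,u(\delta_{x})+b\,u(\delta_{y})=a\,S(u)(x)+b\,S(u)(y),
\]
so that $S(u)$ is linear, i.e.\ $S(u)\in\mathcal{L}(\Omega,E)$ for every $u\in\FV\varepsilon E$.

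There is no genuine obstacle here; the statement is immediate once one observes that $\delta$ is linear in its argument, which is precisely what the hypothesis $\FV\subset\mathcal{L}(\Omega)$ encodes. The only point deserving a moment's care is that each $\delta_{x}$ really belongs to $\FV'$ so that $u(\delta_{x})$ makes sense, but this is guaranteed by the $\dom$-space assumption on $\FV$, and no continuity or topological input beyond that is needed.
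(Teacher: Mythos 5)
Your proof is correct and is precisely the argument the paper has in mind: the paper states this proposition without proof, declaring it ``immediate'', and your write-up simply spells out that immediacy --- the identity $\delta_{ax+by}=a\delta_{x}+b\delta_{y}$ in $\FV'$, valid because every $f\in\FV\subset\mathcal{L}(\Omega)$ is linear, combined with the linearity of $u$. Nothing is missing; the only point of care you flag (that $\delta_{x}\in\FV'$ by the $\dom$-space assumption, so $u(\delta_{x})$ makes sense) is exactly the right one.
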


For $\omega\subset\Omega$ we set $\operatorname{AP}_{\omega}(\Omega,E):=\{E^{\Omega}\;|\;\forall\;x\in\omega:\;f(x)=0\}$ 
and $\operatorname{AP}_{\omega}(\Omega):=\operatorname{AP}_{\omega}(\Omega,\K)$.

\begin{prop}[{vanishing on a subset}]\label{prop:zeros}
Let $\omega\subset\Omega$ and $\FV$ a $\dom$-space such that $\FV\subset\operatorname{AP}_{\omega}(\Omega)$ as a linear subspace. 
Then $S(u)\in\operatorname{AP}_{\omega}(\Omega,E)$ for all $u\in\FV\varepsilon E$. 
\end{prop}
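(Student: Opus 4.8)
The plan is to unwind the definition of $S$ and observe that the relevant point evaluations become trivial at points of $\omega$. Recall that for $u\in\FV\varepsilon E$ one has $S(u)(x)=u(\delta_{x})$, where $\delta_{x}\colon\FV\to\K$, $f\mapsto f(x)$, is the point-evaluation functional. Since $\FV$ is a $\dom$-space, $\delta_{x}\in\FV'$ for every $x\in\Omega$, so this expression is well-defined and $u(\delta_{x})$ makes sense.

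First I would fix $x\in\omega$ and examine $\delta_{x}$ as an element of $\FV'$. By hypothesis $\FV\subset\operatorname{AP}_{\omega}(\Omega)$, which means that every $f\in\FV$ satisfies $f(z)=0$ for all $z\in\omega$. In particular, for our fixed $x\in\omega$ we get $\delta_{x}(f)=f(x)=0$ for \emph{every} $f\in\FV$, that is, $\delta_{x}=0$ in $\FV'$. The whole content of the argument sits in this one observation: the defining functionals at points of $\omega$ are the zero functional on $\FV$.

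Finally, using the linearity of $u$, I would conclude $S(u)(x)=u(\delta_{x})=u(0)=0$ for every $x\in\omega$, which is precisely the assertion $S(u)\in\operatorname{AP}_{\omega}(\Omega,E)$. There is no genuine obstacle here, and indeed the proposition is flagged as immediate in the text: beyond the membership $\delta_{x}\in\FV'$ (supplied by the $\dom$-space structure) and the linearity of $u$, no completeness, compactness, or further topological input is required.
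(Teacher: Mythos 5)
Your proof is correct and is exactly the argument the paper has in mind: the paper states this proposition without proof, calling it immediate, and the intended reasoning is precisely your observation that $\delta_{x}$ restricted to $\FV$ is the zero functional for $x\in\omega$, so $S(u)(x)=u(\delta_{x})=u(0)=0$ by linearity of $u$.
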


\section{Me want examples!}
In our last section we treat many examples of spaces $\FVE$ of weighted functions on a set $\Omega$ 
with values in a locally convex Hausdorff space $E$ over the field $\K$. 
Applying the results of the preceding sections, we give conditions on $E$ such that 
\[
 \FVE\cong \FV\varepsilon E
\]
holds. 
We start with the simplest example of all. Let $\Omega$ be a non-empty set and equip the space $E^{\Omega}$ with the topology 
of pointwise convergence, i.e.\ the locally convex topology given by the seminorms 
\[
|f|_{K,\alpha}:=\sup_{x\in K}p_{\alpha}(f(x))\chi_{K}(x),\quad f\in E^{\Omega},
\]
for finite $K\subset\Omega$ and $\alpha\in \mathfrak{A}$.
To prove $E^{\N_{0}}\cong \K^{\N_{0}}\varepsilon E$ for complete $E$ is given as an exercise in 
\cite[Aufgabe 10.5, p.\ 259]{Kaballo} which we generalise now.

\begin{exa}
Let $\Omega$ be a non-empty set and $E$ an lcHs. Then $E^{\Omega}\cong\K^{\Omega}\varepsilon E$.
\end{exa}
\begin{proof}
The strength and consistency of the generator $(\id_{E^{\Omega}},\id_{\K^{\Omega}})$ is obvious. 
Let $f\in E^{\Omega}$, $K\subset\Omega$ be finite and set $N_{K}(f):=f(\Omega)\chi_{K}(\Omega)$. Then 
we have $N_{K}(f)=f(K)\cup\{0\}$ if $K\neq\Omega$ and $N_{K}(f)=f(K)$ if $K=\Omega$. Thus $N_{K}(f)$
is finite, hence compact, $N_{K}(f)\subset\acx(f(K))$ and $\acx(f(K))$ 
is a subset of the finite dimensional subspace $\operatorname{span}(f(K))$ of $E$. It follows that 
$\acx(f(K))$ is compact by \cite[6.7.4 Proposition, p.\ 113]{Jarchow} implying 
$E^{\Omega}\subset E^{\Omega}_{\kappa}$ by \prettyref{lem:FVE_rel_comp} b)
and our statement by virtue of \prettyref{thm:full_linearisation} with \prettyref{cond:surjectivity_linearisation} d).
\end{proof}

The space of c\`{a}dl\`{a}g functions on a set $\Omega\subset\R$ with values in an lcHs $E$ is defined by 
\[
  D(\Omega,E):=\{f\in E^{\Omega}\;|\;\forall\;x\in\Omega:\;\lim_{w\searrow x}f(w)=f(x)\;\text{and}\;
  \lim_{w\nearrow x}f(w)\;\text{exists}\}.
\]

\begin{prop}\label{prop:cadlag_precomp}
Let $\Omega\subset\R$, $K\subset\Omega$ be compact and $E$ an lcHs. Then $f(K)$ is precompact 
for every $f\in D(\Omega,E)$. 
\end{prop}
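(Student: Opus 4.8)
The plan is to prove that $f(K)$ is totally bounded by exhibiting, for each continuous seminorm $p_{\alpha}$ and each $\varepsilon>0$, a \emph{finite} cover of $f(K)$ by $p_{\alpha}$-balls of radius $\varepsilon$. Since the family $(B_{\varepsilon,\alpha})_{\alpha\in\mathfrak{A},\varepsilon>0}$ with $B_{\varepsilon,\alpha}=\{y\in E\mid p_{\alpha}(y)<\varepsilon\}$ is a neighbourhood basis of $0$ in the lcHs $E$ (the system $(p_{\alpha})$ being directed), such finite covers are exactly what precompactness demands. So I fix $\alpha\in\mathfrak{A}$ and $\varepsilon>0$, abbreviate $p:=p_{\alpha}$, and write $B_{p}(z,r):=\{y\in E\mid p(y-z)<r\}$ for the corresponding balls.

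The heart of the argument is a \emph{local two-ball estimate} around each point of $K$, read off directly from the càdlàg property. Let $x\in K$. Right-continuity $\lim_{w\searrow x}f(w)=f(x)$ (the limit over $w\in\Omega$) yields $\delta_{x}^{+}>0$ with $p(f(w)-f(x))<\varepsilon$ for all $w\in\Omega$ with $x\le w<x+\delta_{x}^{+}$, the case $w=x$ being trivial. Existence of the left limit $\ell_{x}:=\lim_{w\nearrow x}f(w)$ yields $\delta_{x}^{-}>0$ with $p(f(w)-\ell_{x})<\varepsilon$ for all $w\in\Omega$ with $x-\delta_{x}^{-}<w<x$. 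Setting $I_{x}:=(x-\delta_{x}^{-},x+\delta_{x}^{+})$, an open interval containing $x$, every value $f(w)$ with $w\in I_{x}\cap\Omega$ then lies in $B_{p}(\ell_{x},\varepsilon)\cup B_{p}(f(x),\varepsilon)$: the left part of $I_{x}$ is controlled by $\ell_{x}$, while the point $x$ together with the right part is controlled by $f(x)$.

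Compactness of $K$ finishes the job. The family $\{I_{x}\mid x\in K\}$ is an open cover of $K$, so I extract a finite subcover $I_{x_{1}},\dots,I_{x_{n}}$. Any $y\in K$ lies in some $I_{x_{i}}$, whence $f(y)\in B_{p}(\ell_{x_{i}},\varepsilon)\cup B_{p}(f(x_{i}),\varepsilon)$. Therefore
\[
f(K)\subset\bigcup_{i=1}^{n}\bigl(B_{p}(\ell_{x_{i}},\varepsilon)\cup B_{p}(f(x_{i}),\varepsilon)\bigr),
\]
a finite union of $p$-balls of radius $\varepsilon$ with centres in the finite set $\{\ell_{x_{i}},f(x_{i})\mid 1\le i\le n\}\subset E$. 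As $\alpha$ and $\varepsilon$ were arbitrary, $f(K)$ is precompact.

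The step I expect to need the most care is the local estimate, precisely because $f$ lives only on the subset $\Omega\subset\R$ rather than on a genuine interval: the one-sided limits must be read as limits over $w\in\Omega$, and one should address the degenerate situation where $x$ is not a left- or right-accumulation point of $\Omega$. In that case the offending one-sided trace $(x-\delta_{x}^{-},x)\cap\Omega$ (resp.\ $(x,x+\delta_{x}^{+})\cap\Omega$) may be taken empty, so that side contributes nothing and the two-ball bound holds \emph{a fortiori}; this makes the argument robust without any accumulation hypothesis on $\Omega$. Everything else — in particular the passage from a finite seminorm-ball cover to total boundedness, and the (routine) fact that allowing centres in $E$ rather than in $f(K)$ gives the same notion — is straightforward.
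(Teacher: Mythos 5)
Your proposal is correct and follows essentially the same route as the paper's proof: the same local two-ball estimate at each $x\in K$ (right-continuity controlling $[x,x+\delta)\cap\Omega$ via $f(x)$, the left limit controlling $(x-\delta,x)\cap\Omega$ via $\ell_{x}$), followed by extracting a finite subcover of $K$ and covering $f(K)$ by the finitely many $\varepsilon$-balls around the points $f(x_{i})$ and $\ell_{x_{i}}$. The only differences are cosmetic (asymmetric intervals $I_{x}$ instead of the paper's symmetric $V_{x}=\mathbb{B}_{r_{x}}(x)\cap\Omega$ with $r_{x}=\min(r_{-x},r_{+x})$), and your explicit remarks on degenerate one-sided traces and on centres lying in $E$ rather than $f(K)$ are sound.
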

\begin{proof}
Let $f\in D(\Omega,E)$, $\alpha\in \mathfrak{A}$ and $\varepsilon>0$. 
We set $f_{x}:=\lim_{w\nearrow x}f(w)$,
\[
\mathbb{B}_{r}(x)=\{w\in\R\;|\;|w-x|<r\}\quad\text{and}\quad
B_{\varepsilon,\alpha}(y):=\{w\in E\;|\;p_{\alpha}(w-y)<\varepsilon\}
\]
for every $x\in\Omega$, $y\in E$ and $r>0$. 
Let $x\in\Omega$. Then there is $r_{-x}>0$ such that $p_{\alpha}(f(w)-f_{x})<\varepsilon$ for all
$w\in\mathbb{B}_{r_{-x}}(x)\cap(-\infty,x)\cap\Omega$.
Further, there is $r_{+x}>0$ such that $p_{\alpha}(f(w)-f(x))<\varepsilon$ 
for all $w\in\mathbb{B}_{r_{+x}}(x)\cap[x,\infty)\cap\Omega$.
Choosing $r_{x}:=\min(r_{-x},r_{+x})$ and setting
$V_{x}:=\mathbb{B}_{r_{x}}(x)\cap\Omega $, we have 
$f(w)\in (B_{\varepsilon,\alpha}(f_{x})\cup B_{\varepsilon,\alpha}(f(x)))$ for all $w\in V_{x}$.  
The sets $V_{x}$ are open in $\Omega$ with respect to the topology induced 
by $\R$ and $K\subset \bigcup_{x\in K} V_{x}$. Since $K$ is compact, there are $n\in\N$ and 
$x_{1},\ldots,x_{n}\in K$ such that $K\subset \bigcup_{i=1}^{n} V_{x_{i}}.$ Hence we get 
\[
f(K)\subset\bigcup_{i=1}^{n} f(V_{x_{i}})\subset \bigcup_{i=1}^{n}\bigl(B_{\varepsilon,\alpha}(f_{x_{i}})\cup B_{\varepsilon,\alpha}(f(x_{i}))\bigr)
\]
which means that $f(K)$ is precompact.
\end{proof}

Due to the preceding proposition the maps given by 
\[
|f|_{K,\alpha}:=\sup_{x\in \Omega}p_{\alpha}(f(x))\chi_{K}(x),\quad f\in D(\Omega,E),
\]
for compact $K\subset\Omega$ and $\alpha\in\mathfrak{A}$ form a system of seminorms 
inducing a locally convex topology on $D(\Omega,E)$. 

\begin{exa}\label{exa:cadlag}
Let $\Omega\subset\R$ be locally compact and $E$ an lcHs. If $E$ is quasi-complete, then 
$D(\Omega)\varepsilon E\cong D(\Omega,E)$.
\end{exa}
\begin{proof}
First, we show that the generator $(\id_{E^{\Omega}},\id_{\K^{\Omega}})$ for $(D,E)$ is strong and 
consistent. The strength is a consequence of a simple calculation, so we only prove the consistency explicitely. 
We have to show that $S(u)\in D(\Omega,E)$ for all $u\in  D(\Omega)\varepsilon E$. 
Let $x\in\Omega$, $(x_{n})$ be a sequence in $\Omega$ such that $x_{n}\searrow x$ resp.\ $x_{n}\nearrow x$.
We have
\[
\delta_{x_{n}}(f)=f(x_{n})\to f(x)=\delta_{x}(f),\quad x_{n}\searrow x,
\]
and
\[
\delta_{x_{n}}(f)=f(x_{n})\to \lim_{n\to\infty}f(x_{n})=:T(f)(x),\quad x_{n}\nearrow x,
\]
for every $f\in D(\Omega)$ which implies that $(\delta_{x_{n}})$ converges to $\delta_{x}$ if $x_{n}\searrow x$ 
and to $\delta_{x}\circ T$ if $x_{n}\nearrow x$ in $D(\Omega)_{\sigma}'$.
Since $\Omega$ is locally compact, there are a compact neighbourhood $U(x)\subset\Omega$ of $x$ 
and $n_{0}\in\N$ such that $x_{n}\in U(x)$ for all $n\geq n_{0}$. Hence we deduce
\[
\sup_{n\geq n_{0}}|\delta_{x_{n}}(f)|\leq |f|_{U(x)}
\]
for every $f\in D(\Omega)$. Therefore the set $\{\delta_{x_{n}}\;|\;n\geq n_{0}\}$ is 
equicontinuous in $D(\Omega)'$ which implies that $(\delta_{x_{n}})$ converges to $\delta_{x}$ if $x_{n}\searrow x$ 
and to $\delta_{x}\circ T$ if $x_{n}\nearrow x$ in $D(\Omega)_{\gamma}'$ and thus in $D(\Omega)_{\kappa}'$.
From
\[
S(u)(x)=u(\delta_{x})=\lim_{n\to\infty}u(\delta_{x_{n}})=\lim_{n\to\infty}S(u)(x_{n}),\quad x_{n}\searrow x,
\]
and 
\[
u(\delta_{x}\circ T)=\lim_{n\to\infty} u(\delta_{x_{n}})=\lim_{n\to\infty}S(u)(x_{n}),\quad x_{n}\nearrow x,
\]
for every $u\in D(\Omega)\varepsilon E$ follows the consistency. 
Second, let $f\in D(\Omega,E)$, $K\subset\Omega$ be compact and set $N_{K}(f):=f(\Omega)\chi_{K}(\Omega)$.
We observe that $N_{K}(f)=f(K)\cup\{0\}$ if $K\neq \Omega$ and 
$N_{K}(f)=f(K)$ if $K=\Omega$. Thus we deduce that $N_{K}(f)$ is precompact in $E$ for 
every $f\in D(\Omega,E)$ and every compact $K\subset \Omega$ by \prettyref{prop:cadlag_precomp} and
we obtain $D(\Omega,E)\subset D(\Omega,E)_{\kappa}$ by virtue of \prettyref{lem:FVE_rel_comp} b).
The quasi-completeness of $E$ yields that $N_{K}(f)$ is relatively compact by \cite[3.5.3 Proposition, p.\ 65]{Jarchow} 
and that $\oacx(\overline{N_{K}(f)})$ is absolutely convex and compact. 
We derive our statement from \prettyref{thm:full_linearisation} with \prettyref{cond:surjectivity_linearisation} d).
\end{proof}

Let us consider one of the most classical examples next, namely, the space $\mathcal{C}(\Omega,E)$ 
of continuous functions on a $k_{\R}$-space $\Omega$ with values 
in an lcHs $E$ equipped with the topology of uniform convergence on compact subsets of $\Omega$, 
i.e.\ the space $\mathcal{CW}(\Omega,E)$.
In \cite[2.4 Theorem (2), p.\ 138-139]{B2} Bierstedt proved that 
$\mathcal{CW}(\Omega,E)\cong \mathcal{CW}(\Omega)\varepsilon E$ if $E$ is quasi-complete 
which we improve now.

\begin{exa}\label{exa:cont_usual}
Let $\Omega$ be a [metrisable] $k_{\R}$-space and $E$ an lcHs. 
If $E$ has [metric] ccp, then $\mathcal{CW}(\Omega,E)\cong \mathcal{CW}(\Omega)\varepsilon E$.
\end{exa}
\begin{proof}
First, we observe that the generator $(\id_{E^{\Omega}},\id_{\K^{\Omega}})$ for $(\mathcal{CW},E)$ 
is consistent by \prettyref{prop:stetig.cons} and \prettyref{lem:bier} b)(i). Its strength is obvious. 
Let $f\in\mathcal{CW}(\Omega,E)$, $K\subset\Omega$ be compact and set $N_{K}(f):=f(\Omega)\nu_{K}(\Omega)$. 
Then $N_{K}(f)=f(K)\cup\{0\}$ if $K\neq\Omega$ and $N_{K}(f)=f(K)$ if $K=\Omega$ which yields that $N_{K}(f)$ is compact in $E$. 
If $\Omega$ is even metrisable, then $f(K)$ is also metrisable by 
\cite[Chap.\ IX, \S2.10, Proposition 17, p.\ 159]{bourbakiII} and thus the finite union $N_{K}(f)$ as well
by \cite[Theorem 1, p.\ 361]{stone} since the compact set $N_{K}(f)$ is collectionwise normal and locally 
countably compact by \cite[5.1.18 Theorem, p.\ 305]{engelking}.  
Further, $\oacx(N_{K}(f))$ is absolutely convex and compact in $E$ if $E$ has ccp 
resp.\ if $\Omega$ is metrisable and $E$ has metric ccp. 
Thus we deduce $\mathcal{CW}(\Omega,E)\subset \mathcal{CW}(\Omega,E)_{\kappa}$ by \prettyref{lem:FVE_rel_comp} b). We conclude that
$
\mathcal{CW}(\Omega,E)\cong \mathcal{CW}(\Omega)\varepsilon E
$
if $E$ has ccp resp.\ if $\Omega$ is metrisable and $E$ has metric ccp by 
\prettyref{thm:full_linearisation} with \prettyref{cond:surjectivity_linearisation} d).
\end{proof}

We proceed to spaces of distributions. Let us denote by $\mathcal{D}(U)$ the linear subspace of the space 
$\mathcal{C}^{\infty}(U,\K)$ of smooth functions consisting of all functions with compact support 
in an open subset $U\subset\R^{d}$ which is equipped with its usual inductive limit topology. 
A distribution $f\in L(\mathcal{D}(U),E)$ with an lcHs $E$ and 
$U=\R^{d}$ or $U=\R^{d}\setminus\{0\}$ is called homogeneous of degree $\lambda\in\C$ if
\[
 \langle f,\varphi\rangle=t^{\lambda}\langle f,\varphi_{t}\rangle, \quad \varphi\in\mathcal{D}(U),\;t>0,
\]
where $\varphi_{t}(x):=t^{d}\varphi(tx)$ for $x\in U$ and $\langle\cdot,\cdot\rangle$ denotes the canonical pairing 
(see \cite[Definition 3.2.2, p.\ 74]{H1}). By $L^{\lambda\text{-}h}(\mathcal{D}(U),E)$ we mean 
the space of all distributions which are homogeneous of degree $\lambda$ 
and set $\mathcal{D}'(U)^{\lambda\text{-}h}:= L^{\lambda\text{-}h}(\mathcal{D}(U),\K)$. 

\begin{exa}\label{ex:lin_cont_1}
Let $\lambda\in\C$, $U=\R^{d}$ or $U=\R^{d}\setminus\{0\}$ and $E$ an lcHs. Then 
$L^{\lambda\text{-}h}_{b}(\mathcal{D}(U),E)\cong \mathcal{D}'(U)^{\lambda\text{-}h}_{b}\varepsilon E$.
\end{exa}
\begin{proof}
We use our criterion \prettyref{prop:linearisation_subspace} for subspaces to prove the statement. 
The generator $(\id_{E^{\Omega}},\id_{\K^{\Omega}})$ for $(\mathcal{D}(U)_{b}',E)$ is 
consistent by \prettyref{prop:stetig.cons} for continuity in combination 
with \prettyref{rem:bier2} (i) since $\mathcal{D}(U)$ 
is a Montel space and by \prettyref{prop:linearity} for linearity. 
Its strength follows from the continuity and linearity of all $e'\in E'$. 
Let $f\in L_{b}(\mathcal{D}(U),E)$, $B\subset\mathcal{D}(U)$ be bounded and set 
$N_{B}(f):=f(\mathcal{D}(U))\chi_{B}(\mathcal{D}(U))=f(B)\cup\{0\}$. 
We observe that $N_{B}(f)\subset f(\oacx(B))=:K$ as $f$ is linear. 
From $\mathcal{D}(U)$ being Montel, \cite[6.2.1 Proposition, p.\ 103]{Jarchow} and \cite[6.7.1 Proposition, p.\ 112]{Jarchow}
follows that the set $\oacx(B)$ is absolutely convex and compact and thus $K$ as well. 
Therefore $L_{b}(\mathcal{D}(U),E)\subset L_{b}(\mathcal{D}(U),E)_{\kappa}$ by \prettyref{lem:FVE_rel_comp} b) 
and $L_{b}(\mathcal{D}(U),E)\cong \mathcal{D}(U)_{b}'\varepsilon E$
by \prettyref{thm:full_linearisation} with \prettyref{cond:surjectivity_linearisation} d). 

Let us turn to homogeneity of degree $\lambda$. We have all $f\in\mathcal{D}'(U)^{\lambda\text{-}h}_{b}$, 
$\varphi\in\mathcal{D}(U)$ and $t>0$ that
\[
 \delta_{\varphi}(f)=\langle f,\varphi\rangle=t^{\lambda}\langle f,\varphi_{t}\rangle=\delta_{t^{\lambda}\varphi_{t}}(f)
\]
which implies 
\[
\langle S(u),\varphi\rangle=u(\delta_{\varphi})=u(\delta_{t^{\lambda}\varphi_{t}})=S(u)(t^{\lambda}\varphi_{t})
=t^{\lambda}\langle S(u),\varphi_{t}\rangle
\]
for all $u\in\mathcal{D}'(U)^{\lambda\text{-}h}_{b}\varepsilon E$. Another simple calculation yields
\[
\langle e'\circ f,\varphi \rangle=t^{\lambda}\langle e'\circ f,\varphi_{t}\rangle,\quad \varphi\in\mathcal{D}(U),\;t>0,
\] 
for all $e'\in E'$ and $f\in L^{\lambda\text{-}h}_{b}(\mathcal{D}(U),E)$.
Applying \prettyref{prop:linearisation_subspace} (i) proves our statement.
\end{proof}

\begin{exa}\label{ex:lin_cont_2}
Let $\Omega$ be a normed or semi-reflexive, metrisable lcs and $E$ a semi-Montel space. 
Then $L_{b}(\Omega,E)\cong \Omega_{b}'\varepsilon E$.
\end{exa}
\begin{proof} 
The generator $(\id_{E^{\Omega}},\id_{\K^{\Omega}})$ for $(L_{b},E)$ is 
obviously strong and its consistency is a result of \prettyref{prop:stetig.cons} for continuity 
in combination with \prettyref{rem:bier2} (ii) 
and by \prettyref{prop:linearity} for linearity implying our statement by
\prettyref{cor:full_linearisation_E_semi-M}.
\end{proof}
 
If $E$ and $\Omega$ are normed spaces, then $L_{b}(\Omega,E)$ is just the space of bounded linear operators 
with the operator norm and $\Omega_{b}'\varepsilon E\cong K(\Omega,E)$ by \cite[Satz 10.4, p.\ 235]{Kaballo} 
where $K(\Omega,E)$ is the space of compact, linear operators from $\Omega$ to $E$. Hence we cannot omit the 
condition that $E$ is a semi-Montel space in general.

We turn to Cauchy continuous functions. Let $\Omega$ be a metric space, $E$ an lcHs and 
the space $\mathcal{CC}(\Omega,E)$ of Cauchy continuous functions from $\Omega$ to $E$ be 
equipped with the system of seminorms given by
\[
|f|_{K,\alpha}:=\sup_{x\in K}p_{\alpha}(f(x))\chi_{K}(x),\quad f\in\mathcal{CC}(\Omega,E),
\]
for $K\subset\Omega$ precompact and $\alpha\in\mathfrak{A}$. 

\begin{exa}\label{exa:cauchy_cont}
Let $\Omega$ be a metric space and $E$ an lcHs. If $E$ is a Fr\'{e}chet or a semi-Montel space, then 
$\mathcal{CC}(\Omega,E)\cong\mathcal{CC}(\Omega)\varepsilon E$.
\end{exa}
\begin{proof}
The generator $(\id_{E^{\Omega}},\id_{\K^{\Omega}})$ for $(\mathcal{CC},E)$ 
is consistent by \prettyref{prop:c-stetig.cons} with \prettyref{lem:bier3}. 
Its strength follows from the uniform continuity of every $e'\in E'$.
First, we consider the case that $E$ is a Fr\'{e}chet space. 
Let $f\in\mathcal{CC}(\Omega,E)$, $K\subset\Omega$ be precompact and set 
$N_{K}(f):=f(\Omega)\chi_{K}(\Omega)$. Then $N_{K}(f)=f(K)\cup\{0\}$ if $K\neq\Omega$ and 
$N_{K}(f)=f(K)$ if $K=\Omega$.
The set $f(K)$ is precompact in the metrisable space $E$ by \cite[Proposition 4.11, p.\ 576]{beer2009}. 
Thus we obtain $\mathcal{CC}(\Omega,E)\subset\mathcal{CC}(\Omega,E)_{\kappa}$ by virtue of \prettyref{lem:FVE_rel_comp} b). 
Since $E$ is complete, the first part of the statement
follows from \prettyref{thm:full_linearisation} with \prettyref{cond:surjectivity_linearisation} a). 
If $E$ is a semi-Montel space, then it is a 
consequence of \prettyref{cor:full_linearisation_E_semi-M}.
\end{proof}

Let $(\Omega,\d)$ be a metric space, $E$ an lcHs and the space $\mathcal{C}_{bu}(\Omega,E)$ 
of bounded uniformly continuous functions from $\Omega$ to $E$ 
be equipped with the system of seminorms given by
\[
|f|_{\alpha}:=\sup_{x\in\Omega}p_{\alpha}(f(x)),\quad f\in\mathcal{C}_{bu}(\Omega,E),
\]
for $\alpha\in\mathfrak{A}$.

\begin{exa}
Let $(\Omega,\d)$ be a metric space and $E$ an lcHs. If $E$ is a semi-Montel space, then 
$\mathcal{C}_{bu}(\Omega,E)\cong\mathcal{C}_{bu}(\Omega)\varepsilon E$.
\end{exa}
\begin{proof}
The generator $(\id_{E^{\Omega}},\id_{\K^{\Omega}})$ for $(\mathcal{C}_{bu},E)$ 
is consistent by \prettyref{prop:u-stetig.cons} with \prettyref{lem:bier4}. It is also strong 
due to the uniform continuity of every $e'\in E'$ 
yielding our statement by \prettyref{cor:full_linearisation_E_semi-M}.
\end{proof}

Let $(\Omega,\d)$ be a metric space, $z\in\Omega$, $E$ an lcHs, $0<\gamma\leq 1$ and define the 
space of $E$-valued $\gamma$-H\"older continuous functions on $\Omega$ that vanish at $z$ by
\[
\mathcal{C}^{[\gamma]}_{z}(\Omega,E):=\{f\in E^{\Omega}\;|\;f(z)=0\;\text{and}\; 
|f|_{\alpha}<\infty\;\forall\;\alpha\in \mathfrak{A}\}
\]
where
\[
 |f|_{\alpha}:=\sup_{\substack{x,w\in\Omega\\x\neq w}}\frac{p_{\alpha}\bigl(f(x)-f(w)\bigr)}{\d(x,w)^{\gamma}}.
\]
The topological subspace $\mathcal{C}^{[\gamma]}_{z,0}(\Omega,E)$ of $\gamma$-H\"older continuous functions that vanish at infinity 
consists of all $f\in\mathcal{C}^{[\gamma]}_{z}(\Omega,E)$ such that for all $\varepsilon>0$ there is $\delta>0$ with
\[
 \sup_{\substack{x,w\in\Omega\\0<\d(x,w)<\delta}}\frac{p_{\alpha}\bigl(f(x)-f(w)\bigr)}{\d(x,w)^{\gamma}}<\varepsilon.
\]
Further, we set $M:=J:=\{1\}$, $\omega_{1}:=\Omega^{2}\setminus\{(x,x)\;|\;x\in\Omega\}$ 
and $T^{E}_{1}\colon E^{\Omega}\to E^{\omega_{1}}$, $T^{E}_{1}(f)(x,w):=f(x)-f(w)$, and 
\[
\nu_{1,1}\colon\omega_{1}\to [0,\infty),\;\nu_{1,1}(x,w)
:=\frac{1}{\d(x,w)^{\gamma}}.
\]
Then we have for every $\alpha\in\mathfrak{A}$ that
\[
 |f|_{\alpha}=\sup_{(x,w)\in\omega_{1}}p_{\alpha}\bigl(T^{E}_{1}(f)(x,w)\bigr)\nu_{1,1}(x,w),
 \quad f\in\mathcal{C}^{[\gamma]}_{z}(\Omega,E).
\]

\begin{exa}\label{ex:hoelder}
Let $(\Omega,\d)$ be a metric space, $z\in\Omega$, $E$ be an lcHs and $0<\gamma\leq 1$. Then 
\begin{enumerate}
\item [a)] $\mathcal{C}^{[\gamma]}_{z}(\Omega,E)\cong \mathcal{C}^{[\gamma]}_{z}(\Omega)\varepsilon E$ 
if $E$ is a semi-Montel space.
\item [b)] $\mathcal{C}^{[\gamma]}_{z,0}(\Omega,E)\cong \mathcal{C}^{[\gamma]}_{z,0}(\Omega)\varepsilon E$ 
if $\Omega$ is precompact and $E$ quasi-complete.
\end{enumerate}
\end{exa}
\begin{proof}
Let us start with a). From \prettyref{prop:zeros} for vanishing at $z$ and a simple calculation 
follows that $(T^{E}_{1},T^{\K}_{1})$ is a strong and consistent generator for $(\mathcal{C}^{[\gamma]}_{z},E)$. 
This proves part a) by \prettyref{cor:full_linearisation_E_semi-M}. 
Concerning part b), we set $\mathfrak{K}:=\bigl\{\{(x,w)\in\Omega^{2}\;|\;\d(x,w)\geq \delta\}\;|\;\delta>0\bigr\}$, 
and let $\pi\colon\omega_{1}\to\omega_{1}$ be the identity. 
Then 
$\mathcal{C}^{[\gamma]}_{z,0}(\Omega,E)=\mathcal{C}^{[\gamma]}_{z}(\Omega,E)\cap\operatorname{AP}_{\pi,\mathfrak{K}}(\Omega,E)$ 
and the generator $(T^{E}_{1},T^{\K}_{1})$ for $(\mathcal{C}^{[\gamma]}_{z,0},E)$ is strong and consistent by 
\prettyref{prop:zeros} and \prettyref{prop:van.at.inf0} for vanishing at infinity w.r.t.\ $(\pi,\mathfrak{K})$. 
Let $f\in\mathcal{C}^{[\gamma]}_{z,0}(\Omega,E)$ and 
$K_{\delta}:=\{(x,w)\in\Omega^{2}\;|\;\d(x,w)\geq \delta\}$ for $\delta>0$. Setting
\[
N_{\pi\subset K_{\delta},1,1}(f):=\{T^{E}_{1}(f)(x,w)\nu_{1,1}(x,w)\;|\;(x,w)\in K_{\delta}\}
=\bigl\{\tfrac{f(x)-f(w)}{\d(x,w)^{\gamma}}\;|\;(x,w)\in K_{\delta}\bigr\},
\]
we have 
\begin{align*}
N_{\pi\subset K_{\delta},1,1}(f)&\subset \delta^{-\gamma}\{c(f(x)-f(w))\;|\;x,w\in\Omega,\,|c|\leq 1\}\\
&=\delta^{-\gamma}\operatorname{ch}\bigl(f(\Omega)-f(\Omega)\bigr).
\end{align*}
The set $f(\Omega)$ is precompact because 
$\Omega$ is precompact and the $\gamma$-H\"{o}lder continuous function $f$ is uniformly continuous. It 
follows that the linear combination $f(\Omega)-f(\Omega)$ is precompact and the circled hull of 
a precompact set is still precompact by \cite[Chap.\ I, 5.1, p.\ 25]{schaefer}. 
Therefore $N_{\pi\subset K_{\delta},1,1}(f)$ is precompact for every $\delta>0$ connoting the precompactness of 
\[
N_{1,1}(f):=\{T^{E}_{1}(f)(x,w)\nu_{1,1}(x,w)\;|\;(x,w)\in\omega_{1}\}
\]
by \prettyref{rem:condition_d_precomp}. It follows that $N_{1,1}(f)$ 
is relatively compact by \cite[3.5.3 Proposition, p.\ 65]{Jarchow} and 
$K:=\oacx(\overline{N_{1,1}(f)})$ is absolutely convex and compact 
if $E$ is quasi-complete and thus has ccp. 
Hence statement b) is a consequence of \prettyref{lem:FVE_rel_comp} d) and \prettyref{thm:full_linearisation} 
with \prettyref{cond:surjectivity_linearisation} d). 
\end{proof}

Now, we consider the spaces $\mathcal{CV}^{k}_{0}(\Omega,E)$ from \prettyref{ex:weighted_smooth_functions_infinity}
of weighted continuously partially differentiable functions 
that vanish with all their derivatives when weighted at infinity and its subspace
$\mathcal{CV}^{k}_{0,P(\partial)}(\Omega,E):=\{f\in\mathcal{CV}^{k}_{0}(\Omega,E)\;|\;f\in\operatorname{ker}P(\partial)^{E}\}$ 
where 
\[
 P(\partial)^{E}\colon \mathcal{C}^{k}(\Omega,E)\to E^{\Omega},\;
 P(\partial)^{E}(f)(x):=\sum_{i=1}^{n}a_{i}(x)(\partial^{\beta_{i}})^{E}(f)(x),
\]
with $n\in\N$, $\beta_{i}\in\N_{0}^{d}$ such that $|\beta_{i}|\leq\min(m,k)$ and $a_{i}\colon\Omega\to\K$ for $1\leq i\leq n$. 
We present the counterpart for differentiable functions to Bierstedt's result 
\cite[2.4 Theorem, p.\ 138-139]{B2} 
for the space $\mathcal{CV}_{0}(\Omega,E)$ of continuous functions from a completely regular Hausdorff space 
$\Omega$ to an lcHs $E$ weighted with a Nachbin-family $\mathcal{V}$ 
that vanish at infinity in the weighted topology.

\begin{exa}\label{ex:diff_vanish_at_infinity}
Let $E$ be an lcHs, $k\in\N_{\infty}$, $\mathcal{V}^{k}$ be a family of weights 
which is locally bounded and locally bounded away from zero on an open set $\Omega\subset\R^{d}$.
\begin{enumerate}
\item [a)] $\mathcal{CV}^{k}_{0}(\Omega,E)\cong\mathcal{CV}^{k}_{0}(\Omega)\varepsilon E$ if $E$ is quasi-complete and 
$\mathcal{CV}^{k}_{0}(\Omega)$ barrelled.
\item [b)] $\mathcal{CV}^{k}_{0,P(\partial)}(\Omega,E)\cong\mathcal{CV}^{k}_{0,P(\partial)}(\Omega)\varepsilon E$ if 
$E$ is quasi-complete and $\mathcal{CV}^{k}_{0}(\Omega)$ barrelled. 
\end{enumerate} 
\end{exa} 
\begin{proof}
From \prettyref{ex:weighted_smooth_functions_infinity} we recall that 
$\omega_{m}:=M_{m}\times\Omega$ with $M_{m}:=\{\beta\in\N_{0}^{d}\;|\;|\beta|\leq\min(m,k)\}$ for all $m\in\N_{0}$ 
and that
\[
 \mathcal{CV}^{k}_{0}(\Omega,E)=\mathcal{CV}^{k}(\Omega,E)\cap\operatorname{AP}_{\pi,\mathfrak{K}}(\Omega,E)
\]
with $\mathfrak{K}:=\{K\subset\Omega\;|\;K\;\text{compact}\}$ and 
$\pi\colon\bigcup_{m\in\N_{0}}\omega_{m}\to \Omega$, $\pi(\beta,x):=x$. 
The generator $(T^{E}_{m},T^{\K}_{m})_{m\in\N_{0}}$ for $(\mathcal{CV}^{k}_{0},E)$ is given 
by $\dom T^{E}_{m}:=\mathcal{C}^{k}(\Omega,E)$ and 
\[
 T^{E}_{m}\colon\mathcal{C}^{k}(\Omega,E)\to E^{\omega_{m}},\; f\longmapsto [(\beta,x)\mapsto (\partial^{\beta})^{E}f(x)], 
\]
for all $m\in\N_{0}$ and the same with $\K$ instead of $E$. Like in \prettyref{ex:weighted_diff} it follows that the generator 
fulfils condition (ii) of \prettyref{def:consist} (consistency) and \prettyref{def:strong} (strength)  
where we use \prettyref{prop:diff_cons_barrelled}, the barrelledness of 
$\mathcal{CV}^{k}_{0}(\Omega)$ and the assumption that $\mathcal{V}^{k}$ is locally bounded away from zero on $\Omega$.
It follows from \prettyref{prop:van.at.inf0} that condition (i) of \prettyref{def:consist} and \prettyref{def:strong} 
also holds implying that the generator is strong and consistent. 
Thus we deduce statement a) from \prettyref{cor:full_linearisation_E_quasi_compl} and \prettyref{ex:weighted_smooth_functions_infinity} 
in combination with the local boundedness of $\mathcal{V}^{k}$ on $\Omega$.
Statement b) for the subspace follows from a) by \prettyref{prop:linearisation_subspace} like in \prettyref{ex:kernel_P(D)}.
\end{proof}

The spaces $\mathcal{CV}^{k}_{0}(\Omega)$ are Fr\'{e}chet spaces and thus barrelled if $J$ is countable 
by \cite[3.7 Proposition p.\ 7]{kruse2018_2}. In \cite[5.2 Theorem, p.\ 19]{kruse2018_2} the question is answered when they have the 
approximation property. 

Now, we direct our attention to spaces of continuously partially differentiable functions on an open bounded set such that
all derivatives can be continuously extended to the boundary.
Let $E$ be an lcHs, $k\in\N_{\infty}$ and $\Omega\subset\R^{d}$ open and bounded. 
The space $\mathcal{C}^{k}(\overline{\Omega},E)$ is given by 
\[
 \mathcal{C}^{k}(\overline{\Omega},E):=\{f\in\mathcal{C}^{k}(\Omega,E)\;|\;(\partial^{\beta})^{E}f\;
 \text{cont.\ extendable on}\;\overline{\Omega}\;\text{for all}\;\beta\in\N^{d}_{0},\,|\beta|\leq k\}
\]
and equipped with the system of seminorms given by 
\[
 |f|_{\alpha}:=\sup_{\substack{x\in \Omega\\ \beta\in\N^{d}_{0}, |\beta|\leq k}}p_{\alpha}\bigl((\partial^{\beta})^{E}f(x)\bigr),
 \quad f\in\mathcal{C}^{k}(\overline{\Omega},E),
\]
for $\alpha\in \mathfrak{A}$ if $k<\infty$ and by 
\[
 |f|_{m,\alpha}:=\sup_{\substack{x\in \Omega\\ \beta\in\N^{d}_{0}, |\beta|\leq m}}p_{\alpha}\bigl((\partial^{\beta})^{E}f(x)\bigr),
 \quad f\in\mathcal{C}^{\infty}(\overline{\Omega},E),
\]
for $m\in\N_{0}$ and $\alpha\in \mathfrak{A}$ if $k=\infty$.

\begin{exa}\label{exa:diff_ext_boundary}
Let $E$ be an lcHs, $k\in\N_{\infty}$ and $\Omega\subset\R^{d}$ open and bounded. 
Then $\mathcal{C}^{k}(\overline{\Omega},E)\cong\mathcal{C}^{k}(\overline{\Omega})\varepsilon E$ if $E$ has metric ccp.
\end{exa}
\begin{proof}
The generator coincides with the one of \prettyref{ex:diff_vanish_at_infinity}. 
It satisfies condition (ii) of \prettyref{def:consist} (consistency) and \prettyref{def:strong} (strength) for the same reason
since $\mathcal{C}^{k}(\overline{\Omega})$ is a Banach space if $k<\infty$ and a Fr\'{e}chet space if $k=\infty$, 
in particular, both are barrelled. 
As a consequence of \prettyref{prop:cont_ext} and \prettyref{lem:cont_ext} with $T=(\partial^{\beta})^{\K}$ 
for $\beta\in\N^{d}_{0}$, $|\beta|\leq k$, we obtain that condition (i) of \prettyref{def:consist} holds as well, i.e.\ 
$(\partial^{\beta})^{E}S(u)\in\mathcal{C}^{ext}(\Omega,E)$ for all $u\in\mathcal{C}^{k}(\overline{\Omega})\varepsilon E$.
It is easy to check that condition (i) of \prettyref{def:strong} is also satisfied
implying that the generator is strong and consistent. 

Let $f\in\mathcal{C}^{k}(\overline{\Omega},E)$, $m\in\N_{0}$ and set $M_{m}:=\{\beta\in\N^{d}_{0}\;|\;|\beta|\leq k\}$ if $k<\infty$
and $M_{m}:=\{\beta\in\N^{d}_{0}\;|\;|\beta|\leq m\}$ if $k=\infty$. 
We denote by $f_{\beta}$ the continuous extension of $(\partial^{\beta})^{E}f$ on the 
compact metrisable set $\overline{\Omega}$. The set
\[
 N_{1,m}(f):=\{(\partial^{\beta})^{E}f(x)\;|\;x\in \Omega,\,\beta\in M_{m}\}\subset\bigcup_{\beta\in M_{m}}f_{\beta}(\overline{\Omega}) 
\]
is relatively compact and metrisable since it is a subset of a finite union of the compact metrisable sets 
$f_{\beta}(\overline{\Omega})$ like in \prettyref{ex:diff_usual}. 
Due to \prettyref{lem:FVE_rel_comp} b) and \prettyref{thm:full_linearisation} 
with \prettyref{cond:surjectivity_linearisation} d) we obtain our statement if $E$ has metric ccp.
\end{proof}
Om nom nom nom.
\bibliography{biblio}
\bibliographystyle{plain}
\end{document}